\documentclass[12pt,a4paper, oneside, bold,secthm,seceqn,amsthm,ussrhead,reqno]{article}
\usepackage[utf8]{inputenc}
\usepackage[english]{babel}
\usepackage[symbol]{footmisc}
\usepackage{amssymb,amsmath,amsthm,amsfonts,xcolor,enumerate,hyperref, comment,longtable, cleveref}
\usepackage{verbatim}
\usepackage{times}
\usepackage{cite}
\usepackage{pdflscape}
\usepackage{ulem}        
\usepackage[mathcal]{euscript}
\usepackage{tikz}
\usepackage{hyperref}
 
\usepackage{cancel}
\usepackage{stmaryrd}
 
\usepackage{amsfonts}
\usepackage{amssymb}
\usepackage{times}
\usepackage{xcolor}
\usepackage{tkz-graph}
\usepackage{url}
\usepackage{float}
\usepackage{tasks}
\usepackage{array}

\let\<\langle
\let\>\rangle
 
\usepackage{cite}
\usepackage{hyperref}

 \usepackage{fancyhdr} 
\usepackage{amsfonts}
\usepackage{amsmath}
\usepackage{eurosym}
\usepackage{geometry}

\usepackage{caption,booktabs}

\theoremstyle{plain}
\newtheorem{theorem}{Theorem}
\newtheorem{lemma}[theorem]{Lemma}
\newtheorem{proposition}[theorem]{Proposition}

\newtheorem{example}[theorem]{Example}
\newtheorem{definition}[theorem]{Definition}
\newtheorem{corollary}[theorem]{Corollary}

\def\As{\mathrm{As}}
\def\Com{\mathrm{Com}}
\def\di{\mathrm{di}}

\def\Leib{\mathrm{Leib}}
\def\Nov{\mathrm{Nov}}
\def\Var{\mathrm{Var}}

\def\Alia{\mathrm{Alia}}
\def\Zinb{\mathrm{Zinb}}
\def\Alt{\mathrm{Alt}}

\usetikzlibrary{arrows}

\usepackage{fouriernc}
\usepackage{backref}

\renewcommand*{\backrefalt}[4]{%
    \ifcase #1 %
        \else [p. #2]%
    \fi%
}

\begin{document}

\noindent{\Large
A unified approach to the definition  of anti-algebras}
 \footnote{
This research was funded by the Science Committee of the Ministry of Science and Higher Education of the Republic of Kazakhstan $\big($Grant No. AP26199089$\big)$. 
The authors thank the Centre International de Rencontres Mathématiques $\big($CIRM$\big)$, where the first part of this work was prepared during their research stay "3473 - Structural problems of non-quadratic operads" $\big($17-28 November 2025$\big)$ and the SRMC (Sino-Russian Mathematics Center in Peking University, Beijing, China), where 
second part of this work was prepared.
The work is supported by 
FCT  2023.08031.CEECIND, UID/00212/2025, and 1347597976/2024-25. The authors are grateful to Abdenacer Makhlouf for insightful discussions that drew their attention to the topic under consideration.}

\begin{center}

 {\bf
   Ivan Kaygorodov\footnote{CMA-UBI, University of  Beira Interior,   Portugal; \    kaygorodov.ivan@gmail.com},
    Abror Khudoyberdiyev \footnote{V.I.Romanovskiy Institute of Mathematics Uzbekistan Academy of Sciences, Uzbekistan; National University of Uzbekistan,  Uzbekistan; \ khabror@mail.ru}  
\&
 Bauyrzhan Sartayev\footnote{Narxoz University,   Kazakhstan; \    baurjai@gmail.com}  
}

\end{center}

\noindent {\bf Abstract:}
{\it
We introduce the $\mathfrak M$arseille anti-construction $\mathfrak{Ma}$ for varieties of algebras defined by multilinear identities of degree $3$, obtained by changing the sign of the right-nested part of each defining polynomial.
This gives, for any variety $\Var$, a new variety $\mathfrak{Ma}$-$\Var$ that generalizes antiassociative algebras and the recently studied anti-Novikov and anti-right-symmetric $\big($pre-Jacobi--Jordan$\big)$ algebras.
We describe free objects in several $\mathfrak{Ma}$-varieties.
On the operadic level, over a field of characteristic $\mathrm{char}\,\Bbbk\neq 2$, we show $(\mathfrak{Ma}\text{-}\Var)^{(!)}=\mathfrak{Ma}\text{-}(\Var^{(!)})$ and that the Dong property is preserved by $\mathfrak{Ma}.$ Finally, we obtain the necessary and sufficient condition for $\mathfrak{Ma}\text{-}\Var=\Var$. 
The second part of the paper is dedicated to one another type of anti-algebras, called as $\mathcal{M}anin$ anti-algebras.
We give the description of such type of algebras via calculation of the white Manin product of suitable operads and 
present various examples of known $\mathcal{M}anin$ anti-algebras.
In the end, we proved that the symmetric affinization of an algebra gives a Jacobi-Jordan algebra if and only if the original algebra is a $\mathcal{M}a$-Novikov  algebra, that is an analog of a well-known results about affinization of Novikov algebras given by Balinskii and   Novikov. }

 \ 

\noindent {\bf Keywords}:
{\it 
anti-algebras,
non-associative algebras,
operads,
free algebras.}

\noindent {\bf MSC2020}:  
17A30 (primary);
17A50,
17A01, 17A99, 17D99 (secondary).


\tableofcontents 
\newpage

\section*{Introduction}

The idea of producing new algebraic structures from a given one by changing $\big($or deriving$\big)$ the multiplication has a long history.
Already in the classical work of Albert \cite{albert}, such constructions appeared naturally in the study of non-associative algebras.
In its most general form, the principle was realized by Malcev \cite{malcev}: given the associative algebra ${\rm M}_n$ of $n\times n$ matrices over a field and a finite collection
$\Lambda=\{a_{ij},b_{ij},c_{ij}\}\subset {\rm M}_n$, one defines a new multiplication
\[
x\cdot_{\Lambda}y=\sum_{i,j}a_{ij}\,x\,b_{ij}\,y\,c_{ij},
\]
and obtains an algebra ${\rm M}_n^{(\Lambda)}$ such that every finite-dimensional algebra embeds into some ${\rm M}_n^{(\Lambda)}$.
Other important deformation procedures include isotopes and homotopes, which were intensively studied from various viewpoints 
$\big($see, for instance, \cite{alsaody,tk24}$\big)$.
A particular and very popular source of derived multiplications is given by the commutator and the anti-commutator:
\[
[x,y]=xy-yx,
\qquad
x\circ y=xy+yx,
\]
which lead to Lie-admissible and Jordan-admissible type conditions and their numerous variations 
$\big($cf.\ \cite{Alia0, Z17}$\big)$.
A conceptually different but extremely efficient framework for producing and comparing such ``derived'' objects is provided by operads.
In particular, for binary quadratic operads, one can systematically pass to the quadratic dual operad and study how algebraic identities transform under this duality.
This philosophy has proved useful in many structural problems, including the study of bases and dimensions of free objects and the construction of Gröbner--Shirshov bases for replicated (di-)algebras \cite{Kol2017}.

In recent years, several concrete ``anti''-versions of classical varieties were introduced and studied\footnote{It should be noted that the notion of Lie antialgebras was recently introduced by Ovsienko \cite{V11}. Although his definition is entirely different from the frameworks considered herein, the problem of constructing a unified approach for Ovsienko-type antialgebras remains an interesting and timely question.}:
antiassociative algebras appeared in the work of Okubo and Kamiya \cite{OK97} $\big($see also \cite{R26}$\big)$,
anti-pre-Lie and related anti-structures were developed in \cite{LB,LB24,CSL,N26,CL},
and anti-Novikov,  anti-Novikov--Poisson, 
anti-Poisson, and transposed anti-Poisson algebras were investigated in \cite{dN,dP}.
Further examples of anti-pre-Lie type structures on Lie algebras and their graded analogues were discussed in \cite{BD25,BD26}.
These developments indicate that the ``anti''-direction is not a single isolated phenomenon but a general mechanism that should be formulated uniformly.

\medskip 

The first aim  of the present paper is to introduce such a uniform mechanism for any variety of algebras defined by a family of multilinear identities of degree $3$.
Given a multilinear polynomial
\[
\mathfrak f=\sum_{\sigma\in\mathbb S_3}\alpha_\sigma\,\big(x_{\sigma(1)}x_{\sigma(2)}\big)x_{\sigma(3)}
+\sum_{\sigma\in\mathbb S_3}\beta_\sigma\,x_{\sigma(1)}\big(x_{\sigma(2)}x_{\sigma(3)}\big),
\]
we define its $\mathfrak M$arseille anti-polynomial by flipping the sign of the right-nested part.
Starting from a family $\Omega$ of defining degree $3$ identities of a variety $\Var$, we obtain the new variety $\mathfrak{Ma}$-$\Var$ defined by $\mathfrak{Ma}(\Omega)$.
This construction unifies, in a single operadic language, the known antiassociative case \cite{OK97}, the $(-1)$-Novikov and $(-1)$-right-symmetric cases \cite{dN}, and the anti-pre-Lie direction \cite{LB,CSL}.
We call the resulting algebras $\mathfrak M$arseille anti-algebras, since the initial idea emerged during a research stay at CIRM $\big($Marseille$\big)$.

The second goal of the paper is to study free algebras in several basic $\mathfrak{Ma}$-varieties.
Free objects are the main source of structural invariants (growth, nilpotency, dimensions of multilinear components, explicit bases), and they often reveal new combinatorics.
For example, the classical free bicommutative algebra has a well-understood monomial basis \cite{IsmDzhuma}, while Zinbiel-type identities are tightly connected to nilpotency phenomena \cite{dk05,T23}.
At the same time, anti- and ``transposed'' structures frequently produce unexpectedly small nilpotency indices.
One of our motivations is to make these computations explicit for $\mathfrak{Ma}$-versions and to compare them with known behavior in the original varieties.

On the operadic side, we also relate the $\mathfrak{Ma}$-construction to quadratic duality and to the Dong property.
The Dong property and the corresponding weight criterion were introduced and studied for binary quadratic operads in \cite{Dong}, and they lead to effective control of free derived algebras.
We show that $\mathfrak{Ma}$ behaves compatibly with quadratic duality and preserves the Dong property, which provides a conceptual explanation for several examples and produces new ones.
Finally, we connect $\mathfrak{Ma}$ with Malcev's decomposition of the space of invariant degree $3$ relations \cite{Malcev_x} and with 
the modern ``Malcev classification'' program for varieties such as associative and left-symmetric algebras \cite{MalcAs1,MalcAs2,MalcLS}.
This viewpoint clarifies when $\Var$ is stable under $\mathfrak{Ma}$ and explains why the passage $\Var\mapsto\mathfrak{Ma}\text{-}\Var$ naturally separates left- and right-nested components of identities.

\medskip

The last part of the paper is dedicated to one another type of anti-algebras, called as $\mathcal{M}anin$ anti-algebras.
We give the description of such type of algebras via calculation of the white Manin product of suitable operads and 
present various examples of known $\mathcal{M}anin$ anti-algebras.
In the end, we proved that the symmetric affinization of an algebra gives a Jacobi-Jordan algebra if and only if the original algebra is a $\mathcal{M}a$-Novikov  algebra, that is an analog of a well-known results about affinization of Novikov algebras gien by Balinskii and   Novikov in 1985. 

\medskip

\noindent{\bf Notations.}
Throughout the paper, we work over a field $\Bbbk$ of characteristic $0$, unless explicitly stated otherwise.
We do not repeat standard definitions 
$\big($Lie algebras, Leibniz algebras, solvable and nilpotent algebras, etc.$\big)$ 
and refer to the literature, see e.g.\ \cite{k23}.
For a set of vectors $S$ we denote by $\big\langle S\big\rangle$ the $\Bbbk$-linear span of $S$.
For the commutator, associator, anti-commutator, and anti-associator, we use
\[
[x,y]=xy-yx,
\qquad
(x,y,z)=(xy)z-x(yz),
\qquad
x\circ y=xy+yx,
\qquad
\{x,y,z\}=(xy)z+x(yz).
\]
When some identities are verified by symbolic computations, we use the computer algebra package {\rm Albert}.
We always assume that algebras under consideration are nontrivial 
$\big($the multiplication is not identically zero$\big)$.


\section{$\mathfrak M$arseille anti-algebras}

\begin{definition}
For a fixed multilinear polynomial 
\begin{center}
    $
\mathfrak f :=\sum\limits_{\sigma \in \mathbb{S}_3} \alpha_\sigma^j \big(x_{\sigma(1)}x_{\sigma(2)}\big)x_{\sigma(3)}+
    \sum\limits_{\sigma \in \mathbb{S}_3} \beta_\sigma^j x_{\sigma(1)}\big(x_{\sigma(2)}x_{\sigma(3)}\big),$
\end{center}   
    we define  that the $\mathfrak M$arseille anti-polynomial 
  $\mathfrak{Ma(f)}$ as follows 
\begin{center}  $
\mathfrak{Ma(f)} :=\sum\limits_{\sigma \in \mathbb{S}_3} \alpha_\sigma^j \big(x_{\sigma(1)}x_{\sigma(2)}\big)x_{\sigma(3)} - 
    \sum\limits_{\sigma \in \mathbb{S}_3} \beta_\sigma^j x_{\sigma(1)}\big(x_{\sigma(2)}x_{\sigma(3)}\big).$ 
\end{center}

\end{definition}

\begin{definition}
    If a variety of algebras $\rm{Var}$ is defined by the  family $\Omega = \{ f_i\}_{ 1\leq i \leq n}$ of $n$ polynomial identities of degree $3$, then the variety of $\mathfrak M$arseille anti-$\Omega$-algebras $\mathfrak{Ma}$-$\rm{Var}$ is defined by the following family of identities 
    $\mathfrak{Ma}(\Omega)= 
    \big\{ \mathfrak{Ma}(f_i)\big\}_{ 1\leq i \leq n}.$
\end{definition}

\noindent In the following, we give a list of $\mathfrak M$arseille anti-varieties for known varieties of algebras.

\begin{longtable}{|l|rcl|rcl|}

\hline
 & \multicolumn{3}{|c|}{$\rm{Var}$} & 
 \multicolumn{3}{|c|}{$\mathfrak{Ma}$-$\rm{Var}$}\\

\hline
\hline
Associative &$(xy)z $&$=$&$  x(yz)$ & 
$(xy)z $&$=$&$- \   x(yz)$ \\  
\hline
\hline
Shift associative &$(xy)z $&$=$&$  y(zx)$ & 
$(xy)z $&$=$&$- \   y(zx)$ \\  

\hline \hline
Leibniz &$(xy)z$&$=$&$ (xz)y +x(yz)  $  & 
$(xy)z$&$=$&$ (xz)y -x(yz)$ \\  

\hline\hline

Zinbiel & $(xy)z$&$=$&$ x (yz+zy)$ &
$(xy)z $&$=$&$- \  x (yz+zy)$ \\
\hline\hline

right symmetric  & 
$\big(x,y,z\big) $&$=$&$ \ \   \big(x,z,y\big) $ &
$\big\{x,y,z\big\} $&$=$&$ \ \   \big\{x,z,y\big\}$ \\

\hline\hline

right alternative  & 
$\big(x,y,z\big) $&$=$&$ -\big(x,z,y\big) $ &
$\big\{x,y,z\big\} $&$=$&$ -\big\{x,z,y\big\}$ \\

\hline
\hline

flexible   & 
$\big(x,y,z\big)  $&$=$&$    -  \big(z,y,x\big) $ &
$\big\{x,y,z \big\}  $&$=$&$ -    \big\{ z,y,x\big\} $ \\
\hline
\hline

antiflexible   & 
$\big(x,y,z\big)  $&$=$&$      \big(z,y,x\big) $ &
$\big\{x,y,z \big\}  $&$=$&$     \big\{ z,y,x\big\} $ \\
\hline
\hline

Lie-admissible    & 
\multicolumn{3}{l|}{$\sum\limits_{\sigma \in \mathbb S_3}\big(-1\big)^{\sigma} \big(x_{\sigma(1)},x_{\sigma(2)},x_{\sigma(3)} \big) =0$} &
\multicolumn{3}{l|}{$\sum\limits_{\sigma \in \mathbb S_3}\big(-1\big)^{\sigma} \big\{x_{\sigma(1)},x_{\sigma(2)},x_{\sigma(3)} \big\} =0$} \\
\hline
\hline

assosymmetric  & 
$\big(x,y,z\big)  $&$=$&$   \big(x,z,y\big) $ &
$\big\{x,y,z\big\} $&$=$&$  \big \{x,z,y\big\}$ \\

 & 
$\big(x,y,z\big)  $&$=$&$   \big(y,x,z\big) $ &
$\big\{x,y,z\big\}  $&$=$&$   \big\{y,x,z\big\} $ \\

\hline\hline

Novikov  & 
$\big(x,y,z\big) $&$=$&$ \big(x,z,y\big)  $ &
$\big\{x,y,z\big\}  $&$=$&$ \big\{ x,z,y\big\}$ \\

 & 
$x(yz)  $&$=$&$ y(xz)  $ &
$x(yz)  $&$=$&$ y(xz)  $ \\

\hline
\hline
alternative  & 
$\big(x,y,z\big)  $&$=$&$   -  \big(x,z,y\big)$ &
$\big\{x,y,z\big\}  $&$=$&$ - \big\{x,z,y\big\}$ \\

 & 
$\big(x,y,z\big)  $&$=$&$   -  \big(y,x,z\big)  $ &
$\big\{x,y,z\big\}  $&$=$&$ -  \big\{ y,x,z\big\}  $ \\

 \hline
\hline

$\big(-1,1\big)$- & 
$\big(x,y,z\big)  $&$=$&$  -  \big(x,z,y\big) $ &
$\big\{x,y,z\big\}  $&$=$&$  -\   \big\{x,z,y\big\}$ \\
 & 
\multicolumn{3}{l|}{$\big(x,y,z\big)+\big(y,z,x\big)+\big(z,x,y\big)  \ = \ 0  $} &
\multicolumn{3}{l|}{$ \big\{x,y,z\big\}+\big\{y,z,x\big\}+\big\{z,x,y\big\} \ = \ 0 $} \\

\hline

\end{longtable}

\noindent Remarks:
\begin{enumerate}
    \item[$(\rm{A})$] 
The notion of $\mathfrak{Ma}$-Novikov and $\mathfrak{Ma}$-right symmetric algebras  coincides with the notion of $\big(-1\big)$-Novikov and 
$\big(-1\big)$-right symmetric algebras introduced in {\rm \cite{dN}}.

\item[$(\rm{B})$] The notion of     $\mathfrak{Ma}$-antiflexible  algebras coincides with the notion of 
$\mathcal A$flexible  algebras introduced in {\rm \cite{BBK}}.

\item[$(\rm{C})$] The notion of     $\mathfrak{Ma}$-Zinbiel algebras generalizes the notion of 
anti-Zinbiel algebras introduced in {\rm \cite{LB23}}.

\item[$(\rm{D})$] The notion of   $\mathfrak{Ma}$-Leibniz and  $\mathfrak{Ma}$-Zinbiel algebras are special particular cases of generalized Leibniz and generalized Zinbiel algebras introduced in {\rm \cite{MO14}}.

\item[$(\rm{E})$] The notion of $\mathfrak{Ma}$-right symmetric algebras coincides with the notion of pre-Jacobi-Jordan  algebras introduced in {\rm \cite{BBMM}}.

\item[$(\rm{F})$] The polarization of the $\mathfrak{Ma}$-Lie-admissible operad coincides with the operad $\Alia$ $\big($see  \cite{Alia0,Alia}$\big).$ The polarized operation satisfies the identity
\[
[a,b] \circ  c + [b,c]\circ a + [c,a] \circ b \ =\ 0.
\]


\end{enumerate}

Let us denote by ${\rm \mathfrak{Ma}\textrm{-}Var}$ and ${\rm \mathfrak{Ma}\textrm{-}Var\<X\>}$ the variety of $\mathfrak{Ma}$-Var algebras and the corresponding free algebra from the variety ${\rm \mathfrak{Ma}\textrm{-}Var}$, respectively.

\subsection{$\mathfrak{Ma}$-associative and $\mathfrak{Ma}$-alternative algebras}
    \begin{proposition}[see \cite{R26}]
Each $\mathfrak{Ma}$-associative algebra is nilpotent of index $4$.
\end{proposition}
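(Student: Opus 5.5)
The plan is to show that every product of four elements vanishes, and then that products of three elements need not vanish. For the first part, the idea is to rebracket a single degree-$4$ monomial in two different ways using only the defining identity $(xy)z=-x(yz)$, and compare the signs.

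\emph{Vanishing of products of length $4$.} Take arbitrary $a,b,c,d$ in a $\mathfrak{Ma}$-associative algebra and consider the left-normed product $((ab)c)d$. We rebracket it in two ways.

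First, treat $ab$ as a single element and apply the identity twice:
\[
((ab)c)d=-(ab)(cd)=a\big(b(cd)\big).
\]
Second, apply the identity inside first, i.e.\ replace $(ab)c$ by $-a(bc)$:
\[
((ab)c)d=-\big(a(bc)\big)d=a\big((bc)d\big)=-a\big(b(cd)\big).
\]
Comparing the two expressions gives $2\,a(b(cd))=0$. Since $\mathrm{char}\,\Bbbk=0$ (indeed $\mathrm{char}\,\Bbbk\neq 2$ suffices), every right-normed product $a(b(cd))$ is zero.

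Next, each of the five bracketings of a product of four factors can be turned into $\pm a(b(cd))$ by repeated use of the defining identity. This is the same rebracketing argument as for associative algebras, except that each step now changes the sign. Hence all products of four elements vanish. Consequently every product of length $n\ge 4$ is zero as well, because any such product contains a subproduct of length $4$ or is a product of two factors one of which already has length at least $4$. So $A^4=0$.

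\emph{Sharpness of the index.} It remains to check that the index is exactly $4$, i.e.\ that $A^3$ need not vanish. The natural candidate is the free $\mathfrak{Ma}$-associative algebra on one or more generators $x,y,z$. Its degree-$3$ component is the quotient of the $12$-dimensional space of all degree-$3$ multilinear monomials by the $6$ relations $(x_{\sigma(1)}x_{\sigma(2)})x_{\sigma(3)}+x_{\sigma(1)}(x_{\sigma(2)}x_{\sigma(3)})$. These relations only express left-normed monomials through right-normed ones, so the six monomials $x_{\sigma(1)}(x_{\sigma(2)}x_{\sigma(3)})$ remain linearly independent and nonzero. Alternatively, one can give an explicit small example: take the span of $e_1,e_2,e_3$ with
\[
e_1e_1=e_2,\qquad e_1e_2=e_3,\qquad e_2e_1=-e_3,
\]
and all other products zero. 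Then $(e_1e_1)e_1=-e_1(e_1e_1)$, the identity holds on all basis triples, and $e_1(e_1e_1)=e_3\neq 0$.

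The main (and only) subtlety is the sign bookkeeping in the two rebracketings. Getting opposite signs there is exactly what forces $A^4=0$, and this is also where the assumption $\mathrm{char}\,\Bbbk\neq 2$ is used.
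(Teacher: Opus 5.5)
Your proof is correct. The paper gives no argument of its own for this proposition; it quotes it from \cite{R26}. What you wrote is the standard proof of that fact: rebracketing $((ab)c)d$ along the two paths of the pentagon gives $a(b(cd))=-a(b(cd))$, and then every bracketing of four factors equals $\pm a(b(cd))=0$.

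Your version has two advantages. It makes visible that $\mathrm{char}\,\Bbbk\neq 2$ is genuinely needed, since in characteristic $2$ the defining identity is just associativity; the paper's standing characteristic-$0$ assumption hides this. Your sharpness check, that the right-normed degree-$3$ monomials stay independent in the free algebra, is exactly the fact the paper relies on later when it asserts that the words $x_i(x_jx_k)$ are independent in the free $\mathfrak{Ma}$-associative algebra.

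One sentence needs repair. You claim that a product of $n\ge 4$ factors ``contains a subproduct of length $4$ or is a product of two factors one of which already has length at least $4$''. This is false as stated: $(ab)(c(de))$ has sub-products only of lengths $1,2,3$, and its two top-level factors have lengths $2$ and $3$. The conclusion still follows at once from what you proved, because your $a,b,c,d$ are arbitrary elements of the algebra, not generators. Any bracketed product of $n\ge 4$ factors can be regrouped as a bracketed product of exactly four elements, each itself a sub-product. Here it is $(ab)(cw)$ with $w=de$, which is one of the five bracketings and hence zero. Equivalently, $A^n\subseteq A^4$ for $n\ge 4$.
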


\begin{proposition}\label{basisAAs}
Let $\mathfrak{Ma}\textrm{-}\As\<X\>$ be the free $\mathfrak{Ma}$-associative algebra, generated by the set $X=\big\{x_i \big\}_{i \in I}.$ 
Then the following set of elements 
\begin{center}
    ${\mathfrak B}=\big\{  \ x_i,\  x_{i}x_{j},\  x_{i} (x_{j}x_{k})  \ \big\}_{i, j, k \in I}$
\end{center}
gives a basis of $\mathfrak{Ma}$-$\As\<X\>.$
    \end{proposition}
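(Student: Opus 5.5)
The argument has two halves: show that $\mathfrak B$ spans $\mathfrak{Ma}$-$\As\<X\>$, and show that it is linearly independent by building a concrete model algebra. For spanning, the previous proposition (nilpotency of index $4$) tells us that every product of four or more generators vanishes. So the free algebra is spanned by the generators, the products $x_ix_j$, and the degree-$3$ monomials $(x_ix_j)x_k$ and $x_i(x_jx_k)$. The defining identity $(xy)z=-x(yz)$ rewrites every left-nested monomial as minus a right-nested one. Hence the degree-$3$ component is spanned by the elements $x_i(x_jx_k)$, and $\mathfrak B$ spans.

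For linear independence, the free algebra is graded by degree, so it suffices to treat each homogeneous component separately. Degrees $1$ and $2$ are clear, since the only relations start in degree $3$. In degree $3$ we must show that the $|I|^3$ elements $x_i(x_jx_k)$ are independent. For this we construct an explicit $\mathfrak{Ma}$-associative algebra $A$ with basis symbols $e_i$, $e_{ij}$, $e_{ijk}$ for $i,j,k\in I$, and define the multiplication by
\[
e_ie_j=e_{ij},\qquad e_i e_{jk}=e_{ijk},\qquad e_{ij}e_k=-e_{ijk},
\]
with all other products zero.

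Next we verify that $A$ satisfies $(ab)c=-a(bc)$ on basis elements.
\begin{itemize}
\item If the total degree of $a,b,c$ is at least $4$, both sides are zero, since every product of total degree $\geq 4$ vanishes in $A$.
\item If $a=e_i$, $b=e_j$, $c=e_k$, then $(e_ie_j)e_k=-e_{ijk}$ and $e_i(e_je_k)=e_{ijk}$, so the identity holds.
\end{itemize}
Now $A$ is $\mathfrak{Ma}$-associative and generated by $\{e_i\}$. The universal property gives a homomorphism $\mathfrak{Ma}\textrm{-}\As\<X\>\to A$ with $x_i\mapsto e_i$. It sends $x_i(x_jx_k)$ to $e_{ijk}$, sends $x_ix_j$ to $e_{ij}$, and sends $x_i$ to $e_i$. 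These images are linearly independent in $A$, so the elements of $\mathfrak B$ are linearly independent as well.

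The main point needing care is the verification that $A$ is $\mathfrak{Ma}$-associative, since the identity is not multilinear-symmetric. Still, the identity is multilinear, so checking it on basis triples suffices, and only the all-degree-$1$ triple is nontrivial. Alternatively, $A$ is just the quotient $\mathfrak{Ma}\textrm{-}\As\<X\>$ presented via the rewriting rule, and the construction above makes that presentation explicit.
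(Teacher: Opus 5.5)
Your proof is correct and follows the paper's outline: nilpotency of index $4$ removes all words of length at least $4$, and $(xy)z=-x(yz)$ reduces the degree-$3$ part to the right-nested words $x_i(x_jx_k)$. The paper only asserts that these words are independent ``due to anti-associativity'', whereas your explicit model algebra $A$ (with $e_ie_j=e_{ij}$, $e_ie_{jk}=e_{ijk}$, $e_{ij}e_k=-e_{ijk}$, the same kind of construction the paper uses for $\mathfrak{Ma}$-Leibniz and $\mathfrak{Ma}$-Novikov) actually proves that independence, so your version is the more complete argument.
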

\begin{proof}
Let us remember that $\mathfrak{Ma}\textrm{-}\As\<X\>$ is a $3$-step nilpotent algebra, 
hence, $x_{k_1}\ldots x_{k_t}=0$ for $t \ge 4.$
All basis elements are constructed from words of length $1$, $2$, or $3.$
Due to anti-associativity, all words given by 
$\big\{\ x_{i} (x_{j}x_{k}) \  \big\}_{i,   j, k \in I} $ are independent.
Hence, ${\mathfrak B}$ is a basis of  
$\mathfrak{Ma}\textrm{-}\As\<X\>.$ \end{proof}

Direct computations via the algebraic software Albert give 
the following result (about shift associative algebras, see \cite{shift}):
 
\begin{proposition} 
Each $\mathfrak{Ma}$-shift associative algebra is nilpotent of index $5.$
\end{proposition}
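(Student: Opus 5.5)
The statement has two parts: every product of five elements vanishes in a $\mathfrak{Ma}$-shift associative algebra, and some product of four elements does not vanish in general. I would work with the defining identity
$$(xy)z=-\,y(zx),$$
which, after renaming variables, is equivalent to $a(bc)=-(ca)b$. Since all identities are multilinear and $\operatorname{char}\Bbbk=0$, it suffices to study the multilinear components $P_n$ of the free algebra $\mathfrak{Ma}\textrm{-}\mathrm{ShiftAs}\<X\>$. Concretely, I would show $P_5=0$ and $P_4\neq 0$.

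\textbf{Degree four and five reductions.} First I would use the identity as a rewriting rule that moves every product into a normal shape. The identity relates $(xy)z$ with $y(zx)$, so every monomial of the form $(uv)w$ can be replaced, up to sign, by a right-nested one $v(wu)$. Here $u,v,w$ may themselves be products; this is the "substitution" form of the identity. In degree $4$ the normal words are $x(y(zt))$ and $(xy)(zt)$. In degree $5$ they are right combs and a few mixed bracketings. The key is then to apply the identity to the same degree-$5$ monomial in two different ways: once at the outermost level with a composite argument, and once inside a degree-$3$ or degree-$4$ submonomial. This produces relations between normal words with permuted variables.

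\textbf{Closing the degree-five cycles.} I expect these relations to form cycles in which a monomial $m$ returns to itself, up to permutation. The sign picks up a $-1$ at each application of the identity. Following such a cycle should give either $2m=0$ directly, or $m=-\tau m$ for a permutation $\tau$. In the second case, composing with the relations for $\tau m$ again yields $2m=0$, hence $m=0$. The main obstacle is exactly this step: exhibiting, for each of the $14$ bracketing types in degree $5$, a closed chain with an odd total sign. By hand this is bookkeeping-heavy. In practice I would set up the linear system on the $14\cdot 120$ multilinear degree-$5$ monomials, generated by all substitution instances of the identity, and verify that its rank is full. This is exactly the computation Albert performs. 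I would present a representative chain explicitly, say for $((x_1x_2)x_3)(x_4x_5)$, and use $\mathbb S_5$-symmetry to handle the others.

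\textbf{Sharpness.} To show the index is exactly $5$, I would compute the rank of the analogous system in degree $4$ and check that it is not full. Alternatively, I would give a small explicit example: a nilpotent algebra with basis $e_1,e_2,e_3,e_4,\dots$ and a nonzero degree-$4$ product. All products of length $\ge 5$ vanish in it, and the identity is checked only on the finitely many triples whose product is nonzero. For instance, I would quotient the free algebra's degree-$\le 4$ part by the ideal generated by the identity and verify that some word $x_1(x_2(x_3x_4))$ survives. This confirms $A^4\neq 0$ but $A^5=0$.
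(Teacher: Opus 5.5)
Your fallback, a machine rank computation for the substitution instances of $(xy)z=-y(zx)$ in multilinear degrees $4$ and $5$, is exactly how the paper proves this statement: it simply reports an Albert computation. As a plan this is fine. The hand argument around it, however, contains a false step.

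From $m=-\tau m$ you only get $m=(-1)^k\tau^k m$. This forces $2m=0$ only when the order $k$ of $\tau$ is odd; when $k$ is even, composing again yields nothing. Degree $4$ is precisely such a case:
\[
((xy)z)t=-(y(zx))t=(zx)(ty)=-x((ty)z)=x(y(zt))=-((zt)x)y .
\]
This gives $m=-\tau m$ with $\tau=(x\,z)(y\,t)$ an involution, and indeed $P_4\neq 0$. A second point: $\mathbb S_5$-symmetry only permutes variables. To treat all $14$ bracketing types from one representative, you also need that every type is linked to it by applications of the identity. This does hold here, but it has to be checked.

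In degree $5$ a chain with an odd-order relabeling does exist, and it turns the computer check into a short hand proof:
\begin{multline*}
(((ab)c)d)e=-(c(d(ab)))e=(d(ab))(ec)=-(ab)((ec)d)\\
=b(((ec)d)a)=-b(d(a(ec)))=b(d((ca)e))=-(((ca)e)b)d .
\end{multline*}
So the left-normed product satisfies $m(a,b,c,d,e)=-m(c,a,e,b,d)$, where the relabeling $a\mapsto c\mapsto e\mapsto d\mapsto b\mapsto a$ is a $5$-cycle. Iterating five times gives $m=-m$, hence $m=0$ since $\operatorname{char}\Bbbk\neq 2$. Every product of five factors can be brought to $\pm$ a left-normed one by applications of the identity, so $A^5=0$ follows. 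Sharpness is then the separate fact $P_4\neq 0$, which your degree-$4$ rank computation would supply.
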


\begin{proposition}\label{basisAAs}
Let $\mathfrak{Ma}\textrm{-}S\As\<X\>$ be the free $\mathfrak{Ma}$-shift associative algebra, generated by the set $X=\big\{x_i \big\}_{i \in I}.$ 
Then the following set of elements 
\begin{center}
    ${\mathfrak B}=\big\{  \ x_i,\  x_{i}x_{j},\  x_{i} (x_{j}x_{k}), \  (x_{i} x_{j})(x_{m} x_{k})   \ \big\}_{i, j, k, m \in I; \  j<m}$
\end{center}
gives a basis of $\mathfrak{Ma}\textrm{-}S\As\<X\>.$
    \end{proposition}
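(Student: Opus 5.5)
The plan is to treat the free algebra $\mathfrak{Ma}\textrm{-}S\As\<X\>$ degree by degree, using the defining identity
\[
(xy)z=-\,y(zx)
\]
and the preceding proposition that every $\mathfrak{Ma}$-shift associative algebra is nilpotent of index $5$. Because of nilpotency, all words of length $\geq 5$ vanish. It therefore suffices to prove that $\mathfrak B$ spans and is linearly independent in degrees $1,2,3,4$. Since the identity is multilinear and homogeneous, it is enough to work in each homogeneous, and in fact each multilinear, component. Degrees $1$ and $2$ are free, since no relations occur there.

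\emph{Degree $3$.} The identity expresses each left-nested monomial $(x_ax_b)x_c$ as $-x_b(x_cx_a)$, so the right-nested monomials span. The relations in degree $3$ are exactly the substitution instances of the identity. Each one contains exactly one left-nested monomial, and distinct instances contain distinct left-nested monomials. Hence the relation space is the graph of a linear map from left-nested to right-nested monomials, and the elements $x_i(x_jx_k)$ are independent.

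\emph{Degree $4$, spanning.} There are five bracketings, and I will reduce each to the shape $(ab)(cd)$ by applying the identity (with $u,v,w$ possibly products):
\[
((ab)c)d=-c(d(ab)),\qquad (a(bc))d=-(bc)(da),\qquad (ab)(cd)=-b((cd)a),\qquad a((bc)d)=-a(c(db)).
\]
Combining these, every right comb $a(b(cd))$ and every other shape can be expressed through the $(ab)(cd)$-type elements. I expect this to yield a relation of the form $(x_ix_j)(x_mx_k)=-(x_ix_m)(x_jx_k)$. This relation gives $(x_ix_j)(x_jx_k)=0$, using $\operatorname{char}\Bbbk\neq 2$. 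Together these show that the elements with $j<m$ span. Deriving this symmetry cleanly amounts to chasing a cycle of the rewrites above, for instance applying the identity to $(ab)(cd)$ viewed both as $(u)w$ with $u=ab$ and through $a((bc)d)$, and I will do this bookkeeping explicitly.

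\emph{Degree $4$, independence.} This is the main obstacle: I must show there are no further hidden relations. Rather than arguing by hand, I will exhibit a model. Let $A$ be the vector space with formal basis $\mathfrak B$. Define products of basis elements by the reductions above: $x_i\cdot x_j=x_ix_j$, $x_i\cdot(x_jx_k)=x_i(x_jx_k)$, $(x_ix_j)\cdot x_k=-x_j(x_kx_i)$, $(x_ix_j)\cdot(x_mx_k)=\pm$ the normalized basis element (or zero when $j=m$), the degree-$4$ products $x\cdot(\text{deg }3)$ and $(\text{deg }3)\cdot x$ given by the derived normal forms, and all products of total degree $\geq 5$ equal to $0$. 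I then verify that the identity holds on all triples of basis elements. Only triples of total degree $3$ and $4$ matter, and these form a finite multilinear check, doable by hand or with Albert. Universality of $\mathfrak{Ma}\textrm{-}S\As\<X\>$ gives a homomorphism onto $A$ sending $x_i\mapsto x_i$. This homomorphism sends the spanning set $\mathfrak B$ bijectively to a basis, which proves independence.

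As a cross-check, I will compare with the multilinear dimension count in degree $4$. The expected dimension is the number of tuples of distinct letters with $j<m$, namely $12$. This should match the rank computation of the consequences of the identity in the $120$-dimensional multilinear component of the free nonassociative algebra.
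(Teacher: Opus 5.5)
Your degree-$3$ argument and the reduction of every degree-$4$ bracketing to the shape $(ab)(cd)$ are fine. The gap is the degree-$4$ relation you expect, $(x_ix_j)(x_mx_k)=-(x_ix_m)(x_jx_k)$: it is false. This is also exactly where the paper's own proof breaks. In its chain $(xy)(zt)=-(t(xy))z=((yt)x)z=-(xz)(yt)$ the last equality is unjustified.

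The bookkeeping you promise settles this. Every multilinear degree-$4$ monomial occurs in exactly two of the elements $F(u,v,w)$, $F(a,b,c)\,d$, $d\,F(a,b,c)$ that span the degree-$4$ part of the T-ideal of $F(x,y,z)=(xy)z+y(zx)$. For instance, $((ab)c)d$ occurs only in $F(ab,c,d)$ and in $F(a,b,c)d$. Each such element has exactly two terms, so the relations form cycles running through the five bracketings. The cycle through $(ab)(cd)$ is
\[
(ab)(cd)=-b\big((cd)a\big)=b\big(d(ac)\big)=-\big((ac)b\big)d=\big(c(ba)\big)d=-(ba)(dc),
\]
and five more steps bring you back to $(ab)(cd)$ with sign $+$. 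Hence all cycles have length $10$, and the multilinear component has dimension $120/10=12$. The only relation among monomials of type $(\cdot\,\cdot)(\cdot\,\cdot)$ is $(xy)(zt)=-(yx)(tz)$. In the paper's chain one actually has $((yt)x)z=-(yx)(tz)$.

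Consequently $\mathfrak B$ is not a basis, and the model you plan to build cannot satisfy the identity. Both $(x_1x_2)(x_3x_4)$ and $(x_2x_1)(x_4x_3)$ satisfy $j<m$, yet their sum is $0$. The element $(x_3x_4)(x_1x_2)$ and its only partner $(x_4x_3)(x_2x_1)$ both violate $j<m$, so it is not in the span of $\mathfrak B$. With repeated letters the same relation gives $(x_1x_1)(x_2x_2)=-(x_1x_1)(x_2x_2)=0$, although this element lies in $\mathfrak B$; so reducing to multilinear components is not enough either. For two generators the degree-$4$ component has dimension $6$, while $\mathfrak B$ has only $4$ elements there.

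Your proposed cross-check would not have caught this, because in the multilinear case both counts happen to equal $12$. What the argument does prove, granting nilpotency of index $5$, is a corrected statement in which the degree-$4$ part of the basis is $\{(x_ix_j)(x_mx_k): i<j\}\cup\{(x_ix_i)(x_mx_k): m<k\}$. For that statement the cycle decomposition gives spanning and independence at once, just like your degree-$3$ argument, so no separate model is needed.
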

\begin{proof}
Let us remember that $\mathfrak{Ma}\textrm{-}S\As\<X\>$ is a $4$-step nilpotent algebra, 
hence, $x_{k_1}\ldots x_{k_t}=0$ for $t \ge 5.$
All basis elements are constructed from words of length $1$, $2$,  $3,$ or $4.$
Due to anti-shift associativity, all words given by 
$\big\{\ x_{i} (x_{j}x_{k}) \  \big\}_{i,   j, k \in I} $ are independent. It is easy to see that 
\begin{longtable}{lclclcl}
$(xy)(zt)$&$=$&$-\  y\big((zt)x\big) $&$=$&$ y \big(t(xz)\big);$ \\ 
$(xy)(zt)$&$=$&$-\ \big(t(xy)\big)z$&$= $&$\big((yt)x\big)z$&$=$&$- \ (xz)(yt).$
\end{longtable}
Hence, ${\mathfrak B}$ is a basis of   $\mathfrak{Ma}\textrm{-}S\As\<X\>.$ \end{proof}

Analogically, we use algebraic software Albert to obtain the following result:
\begin{proposition}
An algebra $\mathfrak{Ma}$-$\Alt\<X\>$ is nilpotent of index $6$.
\end{proposition}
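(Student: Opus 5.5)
The plan is to reduce to a finite linear-algebra computation in the multilinear components of the free algebra. In characteristic $0$ every $T$-ideal is generated by its multilinear elements, so it suffices to work with the multilinear components $P_n$ of $\mathfrak{Ma}$-$\Alt\<X\>$. The claim then has two parts: (a) $P_6=0$ and every higher component vanishes; (b) $P_5\neq 0$.

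\textbf{Step 1 (structural preparation).} Combining the two defining identities $\{x,y,z\}=-\{x,z,y\}$ and $\{x,y,z\}=-\{y,x,z\}$, the anti-associator $\{x,y,z\}=(xy)z+x(yz)$ is skew-symmetric under the transpositions $(23)$ and $(12)$. Hence it is skew-symmetric under all of $\mathbb S_3$, i.e.\ alternating. Consequently every left-nested product $(ab)c$ can be rewritten as $-a(bc)$ plus an alternating term. This gives a rewriting rule pushing every monomial towards right-normed form modulo anti-associators with arbitrary (non-generator) arguments. I would also record the analogue of the Teichmüller identity for $\{\,,\,,\}$. This is obtained by expanding
\[
\{ab,c,d\}-\{a,bc,d\}+\{a,b,cd\}
\]
and comparing it with $a\{b,c,d\}$ and $\{a,b,c\}d$. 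It shows that anti-associators of degree $4$ are controlled by products with degree-$3$ anti-associators. From these relations one can already obtain a spanning set of $P_4$ and $P_5$, and exhibit a candidate nonzero element of degree $5$, for instance a suitably bracketed $x_1(x_2(x_3(x_4x_5)))$.

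\textbf{Step 2 (degree $6$ computation).} The full vanishing of $P_6$ I would establish by computer, exactly as for the $\mathfrak{Ma}$-shift associative case. Concretely, I would generate in {\rm Albert} all consequences of the linearized identities in degree $6$. These are obtained by substituting monomials for the variables and multiplying the results on the left and right by monomials. I would then check that their span is the whole space of multilinear nonassociative monomials of degree $6$, of dimension $42\cdot 720$. The same computation in degree $5$ must give a rank strictly smaller than $14\cdot 120$, which proves $P_5\neq 0$, so the index is exactly $6$ and not smaller.

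\textbf{Step 3 (higher degrees).} Vanishing in degree $6$ does not immediately kill all longer products, because a product $vw$ of degree $n\ge 7$ may have both factors of degree $\le 5$. The recursion is as follows. If one factor has degree $\ge 6$, it lies in the $T$-ideal generated by $P_6$, hence is $0$ by induction. Otherwise $n\le 10$. So it suffices to verify vanishing in degrees $7,\dots,10$. Albert's $T$-ideal computation reports these automatically once degree $6$ collapses. A hand argument would instead try to show directly that $A^2A^4$, $A^3A^3$ and similar products lie in $A^6$-type expressions using the alternating anti-associator.

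\textbf{Main obstacle.} I expect the main obstacle to be a human-readable proof of $P_6=0$. The rewriting from Step 1 reduces everything to right-normed words plus anti-associator terms, but closing the degree-$6$ system by hand requires many Teichmüller-type substitutions. I would therefore rely on the machine rank computation, and use Step 1 only to explain the structure and the nonvanishing in degree $5$.
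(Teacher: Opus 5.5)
Your proposal is correct and follows the paper's route. The paper gives no hand argument and simply reports an Albert computation of the free $\mathfrak{Ma}$-alternative algebra, which is your Step 2.

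Two side remarks are wrong but do not affect that computation:
\begin{itemize}
\item Step 3 is unnecessary. Every monomial of degree $n\ge 6$ arises from a bracketing of six letters by substituting submonomials: expand the tree from the root one node at a time, and the number of leaves passes through $6$. Hence $P_6=0$ already kills all higher degrees.
\item There is no Teichm\"uller-type identity for $\{\,,\,,\}$. The five degree-$4$ bracketings form an odd cycle, so the expansion you describe yields only
\[
2((ab)c)d=\{ab,c,d\}-\{a,b,cd\}+a\{b,c,d\}-\{a,bc,d\}+\{a,b,c\}d .
\]
\end{itemize}
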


Let $\Var$ be a variety of algebras. Denote by $\Var_2$ the binary part of $\Var$, that is, the class of all algebras $A$ such that every subalgebra of $A$ generated by two elements belongs to $\Var$.
The defined identities of varieties of binary Leibniz $\big($see \cite{Leib_2}$\big)$, 
binary Zinbiel $\big($see \cite{Zinb_2}$\big)$, and 
 binary perm $\big($see \cite{binaryperm}$\big)$ algebras are described by 
  Dzhumadildaev, Ismailov, Mashurov, Smadyarov, Kunanbayev, and Sartayev.

It is well known that the variety of alternative algebras coincides with the variety of binary associative algebras. In contrast, the analogous statement fails for the pair of varieties of $\mathfrak{Ma}$-alternative and $\mathfrak{Ma}$-associative algebras.

\begin{theorem}
For the varieties of $\mathfrak{Ma}$-alternative and $\mathfrak{Ma}$-associative algebras, one has
\[
\mathfrak{Ma}\textrm{-}\Alt \neq \mathfrak{Ma}\textrm{-}\As_2 .
\]
\end{theorem}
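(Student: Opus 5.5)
The plan is to compare the two varieties on two-generated algebras, using nilpotency. The first step is to show that $\mathfrak{Ma}\textrm{-}\As_2\subseteq\mathfrak{Ma}\textrm{-}\Alt$. If $A\in\mathfrak{Ma}\textrm{-}\As_2$, then for any $x,y\in A$ the subalgebra generated by $x,y$ is $\mathfrak{Ma}$-associative, so $\{x,y,y\}=0$ and $\{y,y,x\}=0$. Since $\mathrm{char}\,\Bbbk=0$, the full linearizations of these two-variable identities are exactly $\{x,y,z\}=-\{x,z,y\}$ and $\{x,y,z\}=-\{y,x,z\}$, the defining identities of $\mathfrak{Ma}$-alternative algebras. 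Hence the statement reduces to showing that the inclusion is strict: I need a $\mathfrak{Ma}$-alternative algebra generated by two elements that is not $\mathfrak{Ma}$-associative.

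For the second step I use the nilpotency results stated above. By the Proposition of \cite{R26}, every $\mathfrak{Ma}$-associative algebra is nilpotent of index $4$. Therefore every two-generated subalgebra of an algebra in $\mathfrak{Ma}\textrm{-}\As_2$ has all products of four elements equal to zero. It thus suffices to find a two-generated $\mathfrak{Ma}$-alternative algebra with a nonzero product of degree $4$.

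The candidate is the free algebra $\mathfrak{Ma}\textrm{-}\Alt\<a,b\>$ truncated at degree $\geq 5$. By the Proposition that $\mathfrak{Ma}\textrm{-}\Alt\<X\>$ is nilpotent of index $6$, nontrivial words of degree $4$ or $5$ exist in the free algebra on sufficiently many generators. What must be shown is that a degree-$4$ word already survives with only two generators.

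In degree $3$ nothing is lost. The anti-associator is skew-symmetric in all three arguments, so it vanishes whenever two arguments coincide, and this is consistent with the $\mathfrak{Ma}$-associative relations in two variables. The work is therefore in degree $4$. I would write down the homogeneous components of bidegree $(2,2)$ and $(3,1)$ in $a,b$, spanned by the nonassociative monomials of degree $4$. I would then impose all consequences of the defining identities: substitutions of degree-$2$ monomials into the anti-associator identities, and left and right multiplication of the degree-$3$ identities by generators. The goal is to check, by Albert or by hand, that the quotient is nonzero, for instance that $(ab)(ab)$ or $a\big(b(ab)\big)$ is not killed.

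The main obstacle is this rank computation in degree $4$. To make the counterexample self-contained, I would extract from it an explicit finite-dimensional nilpotent algebra. Its basis would be $a,b$, a few degree-$2$ and degree-$3$ monomials, and one degree-$4$ element $w\neq 0$ with all higher products zero. I would then verify directly from the multiplication table that both $\mathfrak{Ma}$-alternative identities hold. Because this algebra is generated by $a,b$ and has a nonzero product of length $4$, it lies in $\mathfrak{Ma}\textrm{-}\Alt$ but not in $\mathfrak{Ma}\textrm{-}\As_2$, which proves $\mathfrak{Ma}\textrm{-}\Alt\neq\mathfrak{Ma}\textrm{-}\As_2$.
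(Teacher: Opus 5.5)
Your proposal follows the paper's argument. You prove the inclusion $\mathfrak{Ma}\textrm{-}\As_2\subseteq\mathfrak{Ma}\textrm{-}\Alt$, then exhibit a nonzero degree-$4$ element of $\mathfrak{Ma}\textrm{-}\Alt\langle a,b\rangle$, which cannot exist in a two-generated $\mathfrak{Ma}$-associative algebra because those are nilpotent of index $4$; the paper delegates this degree-$4$ check to Albert, and the hand computation you identify as the main obstacle does go through. Using the relations you can eliminate $(x(yz))t$, $x(y(zt))$, $x((yz)t)$ and $((xy)z)t-(xy)(zt)$. What remains is spanned by $((st)u)v+(st)(uv)$, subject only to skew-symmetry in $u,v$, so the degree-$4$ component is $4$-dimensional and your candidates $(ab)(ab)$ and $a(b(ab))$ are both nonzero.
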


\begin{proof}
It is immediate that
\[
\mathfrak{Ma}\textrm{-}\Alt \supseteq \mathfrak{Ma}\textrm{-}\As_2 .
\]
To show that the inclusion is strict, we compare the free two-generated algebras
$\mathfrak{Ma}\textrm{-}\Alt\langle a,b\rangle$ and $\mathfrak{Ma}\textrm{-}\As\langle a,b\rangle$.
Using the computer algebra system Albert, we compute bases of these algebras.

For $\mathfrak{Ma}\textrm{-}\Alt\langle a,b\rangle$ we obtain the following basis elements:
\[
a,\ b,\ bb,\ ba,\ ab,\ aa,
\]
\[
(bb)b,\ (bb)a,\ (ba)b,\ (ab)b,\ (ba)a,\ (ab)a,\ (aa)b,\ (aa)a,
\]
\[
\big((ba)b\big)b,\ \big((ab)a\big)b,\ \big((aa)b\big)b,\ \big((aa)a\big)b.
\]
Moreover, starting from degree $5$ the algebra $\mathfrak{Ma}\textrm{-}\Alt\langle a,b\rangle$ is nilpotent.
On the other hand, by Proposition~\ref{basisAAs}, the algebra
$\mathfrak{Ma}\textrm{-}\As\langle a,b\rangle$ is nilpotent of index $4$.

Therefore,
\begin{center}
$\mathfrak{Ma}\textrm{-}\Alt\<a,b\>\ncong \mathfrak{Ma}\textrm{-}\As_2\<a,b\>$
\ and\ 
$\mathfrak{Ma}\textrm{-}\Alt \varsupsetneq \mathfrak{Ma}\textrm{-}\As_2.$
\end{center}
\end{proof}

\subsection{$\mathfrak{Ma}$-Leibniz algebras}

Let us study a free algebra from the variety of $\mathfrak{Ma}$-Leibniz algebras. 

\begin{lemma}
An algebra $\mathfrak{Ma}$-${\rm Leib\<X\>}$ satisfies the following identities:
\begin{equation}\label{ALeib1}
a\big(bc+cb\big) \ =\ 0,
\end{equation}
\begin{equation}\label{ALeib2}
a\big(b(cd)\big)\ =\ 0,
\end{equation}
\begin{equation}\label{ALeib3}
a\big((bc)d\big) \ =\ 0.
\end{equation}
\end{lemma}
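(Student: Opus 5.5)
The plan is to derive all three identities from the single defining identity of $\mathfrak{Ma}$-Leibniz algebras,
\[
(xy)z=(xz)y-x(yz), \qquad\text{equivalently}\qquad x(yz)=(xz)y-(xy)z. \tag{$\ast$}
\]
The first identity \eqref{ALeib1} should follow directly. Writing $(\ast)$ for $x(yz)$ and for $x(zy)$ and adding gives
\[
x(yz)+x(zy)=(xz)y-(xy)z+(xy)z-(xz)y=0,
\]
which is \eqref{ALeib1}.

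For the degree $4$ identities I work with two multilinear functions of $b,c,d$ (with $a$ fixed):
\[
F(b,c,d)=a\big((bc)d\big),\qquad G(b,c,d)=a\big(b(cd)\big).
\]
I collect three linear relations between them.
\begin{enumerate}
\item[(i)] Applying \eqref{ALeib1} to the inner product $b(cd)$ gives $G(b,c,d)=-G(b,d,c)$.
\item[(ii)] Applying \eqref{ALeib1} at the outer level, $a(uv)=-a(vu)$ with $u=b$, $v=cd$, gives $G(b,c,d)=-F(c,d,b)$.
\item[(iii)] Multiplying $(\ast)$, written for $b(cd)$, on the left by $a$ gives $G(b,c,d)=F(b,d,c)-F(b,c,d)$.
\end{enumerate}

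Combining (i) and (ii), $F$ is antisymmetric in its first two arguments: $F(c,d,b)=-F(d,c,b)$. Combining (ii) and (iii) gives the relation
\[
R(b,c,d):\quad F(b,c,d)-F(b,d,c)-F(c,d,b)=0 .
\]
Using the antisymmetry, every value of $F$ on a permutation of $(b,c,d)$ is $\pm$ one of
\[
x_1=F(b,c,d),\qquad x_2=F(b,d,c),\qquad x_3=F(c,d,b).
\]

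It then remains to solve a small linear system, and I expect this bookkeeping to be the only real obstacle: one must find enough instances of $R$ to force everything to vanish. The three instances I would use are the following.
\begin{enumerate}
\item[(1)] $R(b,c,d)$ gives $x_1-x_2-x_3=0$.
\item[(2)] $R(c,b,d)$ gives $-x_1-x_3-x_2=0$.
\item[(3)] $R(d,b,c)$ gives $-x_2+x_3-x_1=0$.
\end{enumerate}
From (1) and (2), $2x_1=0$, so $x_1=0$ (using $\mathrm{char}\,\Bbbk\neq 2$), and then $x_3=-x_2$. Substituting into (3) gives $x_3=x_2$, hence $x_2=x_3=0$.

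Therefore $F\equiv 0$, which is \eqref{ALeib3}. Then (ii) gives $G\equiv 0$, which is \eqref{ALeib2}. If the signs in this system turn out differently on careful recomputation, the fallback is to expand all degree $4$ multilinear monomials of the forms $a(\cdot(\cdot\cdot))$ and $a((\cdot\cdot)\cdot)$ via $(\ast)$ into left-normed words and check that the resulting relation matrix has full rank, for instance with Albert.
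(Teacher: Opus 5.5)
Your proof is correct, but it takes a genuinely different route from the paper. The paper's proof consists only of the remark that the three identities were checked with the computer algebra package Albert, which is essentially your stated fallback.

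You instead derive everything by hand from the single defining identity $x(yz)=(xz)y-(xy)z$:
\begin{itemize}
\item \eqref{ALeib1} comes from adding two instances of that identity.
\item For the degree-$4$ identities, you show that $F(b,c,d)=a\big((bc)d\big)$ is antisymmetric in its first two arguments, by comparing \eqref{ALeib1} applied inside and outside.
\item You derive the relation $F(b,c,d)-F(b,d,c)-F(c,d,b)=0$ and solve the resulting $3\times 3$ system in $x_1,x_2,x_3$.
\end{itemize}
I rechecked the signs in (i)--(iii) and in the three instances of $R$, and they are right.

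What your version buys is transparency. It is self-contained and human-checkable. It isolates exactly which consequences of the defining identity force $a\,w=0$ for every monomial $w$ with $\deg w\ge 3$, which is the fact used in the spanning part of Theorem~\ref{BasisALeib}. It also shows that \eqref{ALeib1} holds in every characteristic, whereas \eqref{ALeib2} and \eqref{ALeib3} genuinely need $\operatorname{char}\Bbbk\neq 2$. Indeed, in characteristic $2$ the $\mathfrak{Ma}$-construction changes nothing, and the two-dimensional non-abelian Lie algebra with $e_1e_2=e_2=-e_2e_1$ satisfies $e_1\big(e_1(e_1e_2)\big)=e_2\neq 0$.

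The computer check gives none of this insight. However, it needs no bookkeeping, and it is the same mechanical tool the paper relies on for the harder nilpotency statements.
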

\begin{proof}
It can be proved using the computer algebra software program Albert. 
\end{proof}

Any non-associative monomial can be represented as
\[
\big(\cdots((x_iw_1)w_2)\cdots \big)w_n,
\]
where $x_i$ is a generator and $w_i$ is also non-associative monomial. On the set of non-associative monomials of $\mathfrak{Ma}$-${\rm Leib\<X\>}$, we defined degree-lexicographical order. We say that a monomial
\[
m\ =\ \big(\cdots((x_iw_1)w_2)\cdots\big)w_n
\]
belongs to the set $\mathcal{AL}$ if it satisfies the following conditions:
\begin{itemize}
    \item $w_1\leq w_2\leq \ldots\leq w_n$;
    \item ${\rm deg}(w_i)\leq 2$;
    \item for every $i$ such that $w_i=x_{i_1}x_{i_2}$, we have $x_{i_1}\leq x_{i_2}.$
\end{itemize}

\begin{example}\label{ExAleib}
Let us list the set $\mathcal{AL}$ in the multilinear case up to degree $8$. 
Denote by $\mathcal{AL}_i$ the subset of $\mathcal{AL}$ consisting of monomials of degree $i$. 
In degree $3$, we have
\[
\mathcal{AL}_3\ =\ \big\{\,x_{1}(x_{2}x_{3}),\ x_{2}(x_{1}x_{3}),\ x_{3}(x_{1}x_{2}),\ (x_{1}x_{2})x_{3},\ (x_{2}x_{1})x_{3},\ (x_{3}x_{1})x_{2}\,\big\}.
\]
For higher degrees, we indicate only the bracketing type and the number of monomials satisfying the required conditions. If $i=4$, then
\begin{longtable}{lcccr}
$\big((x_{i_1}x_{i_2})x_{i_3}\big)x_{i_4}$&$\rightarrow$&$ {\rm C}_{4}^{1}$&$=$&$4,$\\
$(x_{i_1}x_{i_2})(x_{i_3}x_{i_4})$&$\rightarrow$&${\rm C}_{4}^{2}\cdot 2$&$=$&$12.$
\end{longtable}
If $i=5$, then
\begin{longtable}{lcccr}
$\big(((x_{i_1}x_{i_2})x_{i_3})x_{i_4}\big)x_{i_5}$&$\rightarrow$&${\rm C}_{5}^{1}$&$=$&$5,$\\
$\big((x_{i_1}x_{i_2})x_{i_3}\big)(x_{i_4}x_{i_5})$&$\rightarrow$&${\rm C}_{5}^{3}\cdot 3$&$=$&$30,$\\ 
$\big(x_{i_1}(x_{i_2}x_{i_3})\big)(x_{i_4}x_{i_5})$&$\rightarrow$&${\rm C}_{5}^{3}\cdot \frac  32$&$=$&$15.$
\end{longtable}
If $i=6$, then
\begin{longtable}{lcccr}
$\big((((x_{i_1}x_{i_2})x_{i_3})x_{i_4})x_{i_5}\big)x_{i_6}$&$\rightarrow$&${\rm C}_{6}^{1}$&$=$&$6,$\\
$\Big(\big((x_{i_1}x_{i_2})x_{i_3}\big)x_{i_4}\Big)(x_{i_5}x_{i_6})$&$\rightarrow$&${\rm C}_{6}^{2}\cdot 4$&$=$&$60,$\\ 
$\big((x_{i_1}x_{i_2})(x_{i_3}x_{i_4})\big)(x_{i_5}x_{i_6})$&$\rightarrow$&${\rm C}_{6}^{4}{\rm C}_{4}^{2}\cdot \frac{2}{2}$&$=$&$90.$
\end{longtable}
If $i=7$, then
\begin{longtable}{lcccr}
$\Big(\big((((x_{i_1}x_{i_2})x_{i_3})x_{i_4})x_{i_5}\big)x_{i_6}\Big)x_{i_7}$&$\rightarrow$&${\rm C}_{7}^{1}$&$=$&$7,$\\
$\Big(\big(((x_{i_1}x_{i_2})x_{i_3})x_{i_4}\big)x_{i_5}\Big)(x_{i_6}x_{i_7})$&$\rightarrow$&${\rm C}_{7}^{2}\cdot 5$&$=$&$105,$\\
$\Big(\big((x_{i_1}x_{i_2})x_{i_3}\big)(x_{i_4}x_{i_5})\Big)(x_{i_6}x_{i_7})$&$\rightarrow$&${\rm C}_{7}^{2}{\rm C}_{5}^{2}\cdot \frac 32$&$=$&$315,$\\
$\Big(\big(x_{i_1}(x_{i_2}x_{i_3})\big)(x_{i_4}x_{i_5})\Big)(x_{i_6}x_{i_7})$&$\rightarrow$&${\rm C}_{7}^{2}{\rm C}_{5}^{2}{\rm C}_{3}^{2} \cdot \frac 1{3!}$&$=$&$105.$
\end{longtable}
If $i=8$, then
\begin{longtable}{lcccr}
$\Big(\big(((((x_{i_1}x_{i_2})x_{i_3})x_{i_4})x_{i_5})x_{i_6}\big)x_{i_7}\Big)x_{i_8} $&$\rightarrow $&${\rm C}^1_8$&$=$&$8,$\\
$\Big(\big((((x_{i_1}x_{i_2})x_{i_3})x_{i_4})x_{i_5}\big)x_{i_6}\Big)(x_{i_7}x_{i_8}) $&$\rightarrow $&${\rm C}_{8}^{2} \cdot 6 $&$= $&$168,$\\
$\Big(\big(((x_{i_1}x_{i_2})x_{i_3})x_{i_4}\big)(x_{i_5}x_{i_6})\Big)(x_{i_7}x_{i_8}) $&$\rightarrow $&${\rm C}_{8}^{2}{\rm C}_{6}^{2}\cdot \frac 4 2 $&$= $&$840,$\\
$\Big(\big((x_{i_1}x_{i_2})(x_{i_3}x_{i_4})\big)(x_{i_5}x_{i_6})\Big)(x_{i_7}x_{i_8}) $&$\rightarrow $&${\rm C}_{8}^{2}{\rm C}_{6}^{2}{\rm C}_{4}^{2}\cdot \frac 2{3!} $&$= $&$840.$
\end{longtable}
Similarly, we constructed $\mathcal{AL}_9$ and $\mathcal{AL}_{10}$, and obtained
\[
|\mathcal{AL}_9|=6876
\qquad\text{and}\qquad
|\mathcal{AL}_{10}|=26200.
\]
\end{example}

\begin{theorem}\label{BasisALeib}
The set $\mathcal{AL}$ is a basis of the algebra $\mathfrak{Ma}$-${\rm Leib\<X\>}$.
\end{theorem}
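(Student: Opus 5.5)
The proof has two parts: first show that $\mathcal{AL}$ spans $\mathfrak{Ma}$-${\rm Leib\<X\>}$, then show that it is linearly independent.

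\emph{Spanning.} The defining identity is $(xy)z=(xz)y-x(yz)$. Together with the identities \eqref{ALeib1}--\eqref{ALeib3}, it gives a rewriting system on monomials written in the form $\big(\cdots((x_iw_1)w_2)\cdots\big)w_n$. I would argue by induction on degree and, within a fixed degree, on the deg-lex order.

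Step 1: every factor $w_k$ may be assumed to have degree at most $2$. If ${\rm deg}(w_k)\ge 3$, write $w_k=uv$ with ${\rm deg}(u)+{\rm deg}(v)\ge 3$. Then $w_k$ is of the form $b(cd)$, $(bc)d$, or $(\cdots)(\cdots)$, i.e.\ a product $uv$ in which $u$ or $v$ is itself a product. We need a consequence of \eqref{ALeib2} and \eqref{ALeib3} saying that $a\big(w\big)=0$ whenever ${\rm deg}(w)\ge 3$. This follows by induction on ${\rm deg}(w)$: expand $w$ with the $\mathfrak{Ma}$-Leibniz identity until the argument has the form $b(cd)$ or $(bc)d$ with $b,c,d$ arbitrary monomials. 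Since \eqref{ALeib2} and \eqref{ALeib3} hold for all elements, they kill these terms. The product $(\cdots)w_k$ is then $0$ up to the right-multiplication structure, because $(Aw_{k-1})w_k$ is a left-nested chain and the prefix can be absorbed.

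Step 2: the factors can be ordered. Right multiplications by consecutive factors anticommute modulo lower terms: $(Aw)w'=(Aw')w-A(ww')$. The correction term $A(ww')$ vanishes when ${\rm deg}(ww')\ge 3$ by Step 1. When $w,w'$ are both generators, it is a term of the same shape with fewer factors. Note that the sign in front of $(Aw')w$ is $+$, so the $\mathfrak{Ma}$-identity genuinely permutes factors.

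Step 3: the bracket in a degree-$2$ factor can be ordered. By \eqref{ALeib1}, $A(x_jx_i)=-A(x_ix_j)$, so we may take $x_{i_1}\le x_{i_2}$. One also has to check that $A(x_ix_i)=0$, which again follows from \eqref{ALeib1} in characteristic $0$. This matches the counting in Example~\ref{ExAleib}, where the degree-$2$ factors are listed with $x_{i_1}<x_{i_2}$ strictly in the multilinear case.

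Step 4: normalise the head $x_iw_1$. When $w_1$ is a generator it is not sorted relative to $x_i$. When $w_1$ has degree $2$, the count $\frac32$ in the table suggests that $x_i(x_jx_k)$ heads are taken modulo the cyclic relations coming from \eqref{ALeib1}. I would make precise exactly which heads survive, and check that the condition $w_1\le w_2\le\cdots$ in the definition of $\mathcal{AL}$ encodes this correctly.

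\emph{Independence.} This is the main obstacle. I would first try the Gröbner--Shirshov (Composition--Diamond) route for nonassociative algebras defined by the operad. Take the rewriting rules above as a set $S$ in the free magma algebra, show that all compositions of inclusion and intersection are trivial modulo $S$, and conclude that the $S$-irreducible words are exactly $\mathcal{AL}$.

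If the compositions become unwieldy, I would fall back on building a model algebra. Take the vector space with basis $\mathcal{AL}$ and define the product of $m=\big(\cdots(x_iw_1)\cdots\big)w_n$ and $w$ as follows: the product is $0$ if ${\rm deg}(w)\ge 3$. If $w$ is a generator or a sorted pair, insert $w$ into the sorted list of factors, with signs coming from \eqref{ALeib1} and with the correction terms given by Step 2. Then verify the $\mathfrak{Ma}$-Leibniz identity on basis triples; this verification is the bulk of the work. Since the canonical map from the free algebra onto this model sends $\mathcal{AL}$ to a basis, $\mathcal{AL}$ is independent.

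As a consistency check, the resulting multilinear dimensions should agree with the Albert computations $6,16,50,156,\ldots,6876,26200$ listed in Example~\ref{ExAleib}.
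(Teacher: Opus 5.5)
Your plan, with the model-algebra fallback, is the paper's proof. Spanning is done by exactly your Steps~1--3. Independence comes from an algebra on the span of $\mathcal{AL}$: multiplication by a monomial of degree $\ge 3$ is $0$, a degree-$2$ factor is inserted into the sorted list, and a generator $x_p$ is inserted together with the correction terms $A'(x_px_t)$ that Step~2 produces each time $x_p$ passes a larger generator $x_t$. The paper, too, only asserts that this algebra satisfies the $\mathfrak{Ma}$-Leibniz identity ``by direct computation''. It does not use a Gr\"obner--Shirshov argument.

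A few corrections are needed.

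\emph{Step~4 should be dropped.} There are no relations among heads, and Steps~1--3 already bring every monomial into $\mathcal{AL}$. The multilinear degree-$3$ component has dimension $12-6=6$, because the six permuted copies of the identity are independent. So $x_1(x_2x_3)$, $x_2(x_1x_3)$, $x_3(x_1x_2)$ are all independent. The factor $\frac32$ in Example~\ref{ExAleib} is just $3\cdot\frac12$: three choices of head inside $\{i_1,i_2,i_3\}$, halved by the requirement $w_1\le w_2$. Imposing ``cyclic relations'' on heads would make you undercount.

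\emph{Your Step~3 remark is more than a side check.} It is correct: since $a(bb)=0$, the set $\mathcal{AL}$ as literally defined with $x_{i_1}\le x_{i_2}$ contains zero vectors such as $x_1(x_2x_2)$. So the statement is only true with $x_{i_1}<x_{i_2}$; the difference is invisible in the multilinear counts. Correspondingly, the model product must satisfy $m\cdot(x_jx_k)=-m\cdot(x_kx_j)$ and $m\cdot(x_jx_j)=0$. The paper's definitions of $\mathcal{AL}$ and of $A_k\cdot w_p$ overlook this, so state it explicitly.

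\emph{Minor points.} In Step~2 the factors commute, not anticommute. Step~1 needs no induction: a monomial of degree $\ge 3$ is literally $(bc)d$ or $b(cd)$, and $a$ can be taken to be the whole prefix.

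\emph{The verification you call the bulk of the work is short.} Let $\mathfrak n$ be the free $2$-step nilpotent Lie algebra on $\{e_x\}_{x\in X}$. Put $M=\Bbbk X\otimes U(\mathfrak n)$ with $u\cdot v=u\,\phi(v)$ (right action), where $\phi(x\otimes 1)=e_x$, $\phi(x\otimes e_y)=-[e_x,e_y]$, and $\phi=0$ in degree $\ge 3$.
\begin{itemize}
\item The $\mathfrak{Ma}$-Leibniz identity reduces to $\phi(v\phi(w))=-[\phi(v),\phi(w)]$. This is immediate because $[\mathfrak n,\mathfrak n]$ is central and $\mathfrak n$ vanishes in degree $\ge 3$.
\item The basis vectors $x\otimes(\text{PBW monomial})$ are, up to sign, the images of the elements of $\mathcal{AL}$ with strict pairs, which proves independence.
\item It also gives $\dim\mathfrak{Ma}\text{-}\mathrm{Leib}(n)=n\cdot I(n-1)$, where $I(m)$ is the number of involutions of $m$ letters. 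Hence the recurrence holds for all $n$, not only up to degree $10$.
\end{itemize}
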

\begin{proof}
Firstly, we show that the set $\mathcal{AL}$ spans $\mathfrak{Ma}$-${\rm Leib}\<X\>$. 
Let $m$ be an arbitrary monomial of degree $k$ of the form
\[
m\ =\ \big(\cdots((x_i w_1)w_2)\cdots\big)w_n.
\]
Using the identities \eqref{ALeib2} and \eqref{ALeib3}, we can reduce $m$ whenever $\deg(w_j)\ge 3$. Using the identity \eqref{ALeib1}, we order the generators inside $w_j$. 
Next, we can interchange $w_j$ and $w_{j+1}$ by the defining identity of $\mathfrak{Ma}$-${\rm Leib}\<X\>$, namely,
\[
(aw_j)w_{j+1}=a(w_{j+1}w_{j})+(aw_{j+1})w_{j}.
\]
If both $w_j$ and $w_{j+1}$ are generators and $w_j>w_{j+1}$, then the monomials
$a(w_{j+1}w_{j})$ and $(aw_{j+1})w_{j}$ belong to $\mathcal{AL}$. 
If $\deg(w_{j})=\deg(w_{j+1})=2$, then the identity above reduces to
\[
(aw_j)w_{j+1}=(aw_{j+1})w_{j}.
\]
Thus, the set of remaining monomials coincides with the set $\mathcal{AL}$.

Let us consider the algebra $\mathcal{A}\<X\>$ with a basis $\mathcal{AL}$. 
We define a multiplication on $\mathcal{A}\<X\>$ as follows. 
Let $A_i\in\mathcal{AL}$ be an arbitrary basis monomial of degree $i$.

\begin{itemize}
\item If $A_t\in\mathcal{AL}$ has degree $t\ge 3$, then
\[
A_k\cdot A_t=0
\qquad (k\ge 1).
\]

\item If $t=2$, and
\[
A_k\ =\ \big(\cdots((x_i w_1)w_2)\cdots\big)w_n,
\]
then for any degree-$2$ basis monomial $w_p$ we set
\[
A_k\cdot w_p
=\big(\cdots((((\cdots((x_i w_1)w_2)\cdots)w_{r-1})w_p)w_r)\cdots\big)w_n,
\]
where $w_{r-1}\le w_p\le w_r$.

\item If $t=1$, and
\[
A_k=\big(\cdots(((\cdots((x_i x_{t_1})x_{t_2})\cdots)x_{t_l})w_{l+1})\cdots\big)w_n,
\]
then for any generator $x_p$ we define
\begin{multline*}
A_k\cdot x_p
=\big(\cdots(((\cdots((((\cdots((x_i x_{t_1})x_{t_2})\cdots)x_{t_{r-1}})x_p)x_{t_r})\cdots)x_{t_l})w_{l+1})\cdots\big)w_n \\
+\sum_{s=r}^{l}\big(\cdots((((\cdots(((\cdots((x_i x_{t_1})x_{t_2})\cdots)x_{t_{r-1}})x_{t_r})\cdots)x_{t_l})(x_p x_{t_s}))w_{l+1})\cdots\big)w_n,
\end{multline*}
where $x_{t_{r-1}}\le x_p<x_{t_r}$, and in the final sum there is no generator $x_{t_s}$ among $x_{t_r},\dots,x_{t_l}$.
\end{itemize}
A direct computation shows that the algebra $\mathcal{A}\<X\>$ belongs to $\mathfrak{Ma}\text{-}$${\rm Leib}$, that is, it satisfies the defining identity of $\mathfrak{Ma}\text{-}$${\rm Leib}$. 
It remains to note that
\[
\mathcal{A}\<X\>\cong \mathfrak{Ma}\text{-}{\rm Leib}\<X\>.
\]
\end{proof}

As verified in Example~\ref{ExAleib}, the first values of $\dim\big({\rm \mathfrak{Ma}\text{-}Leib}(n)\big)$ are given by
\begin{center}
\begin{tabular}{c|cccccccccc}
$n$ & 1 & 2 & 3 & 4 & 5 & 6 & 7 & 8 & 9 & 10 \\
\hline
${\rm \mathfrak{Ma}\text{-}Leib}(n)$ & 1 & 2 & 6 & 16 & 50 & 156 & 532 & 1856 & 6876 & 26200
\end{tabular}
\end{center}
This sequence coincides with OEIS~A013989, which counts the total number of adjacent transpositions in all involutions (self-inverse permutations) of the set $\{1,2,\dots,n+2\}$. 
In particular, if we set $a(n)={\rm \mathfrak{Ma}\text{-}Leib}(n)$, then up to degree 10, we have the recurrence
\[
a(n)=n\left(\frac{a(n-1)}{n-1}+a(n-2)\right),
\qquad a(1)=1,\quad a(2)=2.
\]

\subsection{$\mathfrak{Ma}$-Zinbiel algebras}

The present section is dedicated to a more detailed study of $\mathfrak{Ma}$-Zinbiel and $\mathfrak{Ma}$-alternative algebras. 
Let us remember that each Zinbiel algebra is right-commutative. The following statement gives an $\mathfrak{Ma}$-analog for ${\rm anti}$-Zinbiel algebras.

\begin{proposition}\label{arz}
    Each $\mathfrak{Ma}$-Zinbiel algebra is a right-commutative algebra.
\end{proposition}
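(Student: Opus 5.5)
The claim should follow directly from the defining identity, with no need for free-algebra computations or the software Albert. By the table above, the $\mathfrak{Ma}$-Zinbiel identity is obtained from the Zinbiel identity $(xy)z = x(yz+zy)$ by flipping the sign of its right-nested part. So an algebra $A$ is $\mathfrak{Ma}$-Zinbiel exactly when
\[
(xy)z \ =\ -\,x\big(yz+zy\big) \qquad \text{for all } x,y,z\in A .
\]
The plan is to exploit the symmetry of the right-hand side in its last two arguments, just as one does for ordinary Zinbiel algebras.

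Concretely, I will first write the identity for an arbitrary triple $(x,y,z)$. I will then write it again with $y$ and $z$ interchanged, which gives $(xz)y = -x(zy+yz)$. Since $zy+yz = yz+zy$ is the anti-commutator $y\circ z$, the two right-hand sides are literally the same element $-x(y\circ z)$ of $A$. Subtracting the two identities yields $(xy)z-(xz)y=0$, which is right-commutativity.

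No characteristic assumption is needed, since we never divide. The argument applies verbatim to the free algebra $\mathfrak{Ma}\text{-}\Zinb\langle X\rangle$, and hence to every algebra of the variety. The only point requiring any care is to read the sign convention of $\mathfrak{Ma}$ correctly: only the right-nested part $x(yz)+x(zy)$ changes sign, and that part is symmetric in $y,z$ both before and after the flip. So there is no genuine obstacle; the proof is a one-line computation.
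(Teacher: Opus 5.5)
Your proposal is correct and is essentially the paper's own proof: the paper writes the one-line chain $(xy)z=-x(zy+yz)=-x(yz+zy)=(xz)y$. This uses exactly the symmetry of the right-nested part in $y,z$ that you exploit.
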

\begin{proof}
    We have 
    $(xy)z \ =\ - \ x(zy+yz) \ = \ -\ x (yz+zy) \ = \   (xz)y.$
\end{proof}

Indeed, there is no need to construct the basis of $\mathfrak{Ma}$-$\Zinb\<X\>$, since using the algebraic software Albert, we obtain
\begin{proposition}
An algebra $\mathfrak{Ma}$-$\Zinb\<X\>$ is nilpotent of index $5$.
\end{proposition}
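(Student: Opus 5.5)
The statement has two parts. First, every product of degree $5$ vanishes in $\mathfrak{Ma}$-$\Zinb\<X\>$. Second, some product of degree $4$ does not. By multilinearity and char $\Bbbk=0$, it suffices to work in the multilinear components $\mathfrak{Ma}\text{-}\Zinb(4)$ and $\mathfrak{Ma}\text{-}\Zinb(5)$.

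I would start from two relations. The defining identity is $(xy)z=-x(y\circ z)$, where $y\circ z=yz+zy$. Proposition~\ref{arz} gives right-commutativity $(xy)z=(xz)y$. Together they yield a small rewriting toolkit:
\begin{enumerate}
\item[(i)] A left-normed product $(\cdots((x y_1)y_2)\cdots)y_k$ is symmetric in $y_1,\dots,y_k$.
\item[(ii)] Any product whose right factor is an anticommutator can be moved to left-normed form, via $a(b\circ c)=-(ab)c$.
\item[(iii)] Right-commutativity with a product substituted in the last slot gives $(xy)(z\circ t)=(x(z\circ t))y=-((xz)t)y$. Hence $((xy)z)t=-(xy)(z\circ t)$ is consistent with (i).
\end{enumerate}

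Step~1 treats degree $4$. I would substitute products into each variable of both identities and multiply the degree-$3$ identities by a generator on either side. This gives the full list of consequences in degree $4$. I would use it to show that every degree-$4$ monomial reduces to left-normed monomials $((xy)z)t$ and monomials of the form $a[b,c]$-type with a commutator in the right factor. The latter are not controlled by (ii). I expect a nonzero element such as $((x_1x_2)x_3)x_4$ to survive. To certify this I would either exhibit a small nilpotent algebra with a nonzero degree-$4$ product satisfying both identities, or read off a nonzero rank defect of the linear system in the $5\cdot 24=120$ multilinear degree-$4$ monomials. Either way, the index is exactly $5$ and not smaller.

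Step~2 is the core of the argument. Each degree-$5$ monomial has the form $uv$ with $\deg u+\deg v=5$. Using (ii), (iii) and the degree-$4$ reductions from Step~1 inside $u$ and $v$, I would rewrite every such monomial as a combination of left-normed monomials $(((xy)z)t)s$. It then remains to kill $L(x;y,z,t,s):=(((xy)z)t)s$. By (i), $L$ is symmetric in its last four arguments. Next I would apply the defining identity with the first argument replaced by the product $xy$, and compare with the identity applied to $x(y\circ z)$ in the first slot. Using $x(y\circ z)=-(xy)z$ this produces a relation of the form $L(x;y,\dots)=-L(y;x,\dots)$ up to terms already shown equal to $\pm L$. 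Combining this transposition of the head variable with full symmetry in the tail should force $L=\lambda L$ with $\lambda\neq 1$, so $L=0$.

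I expect the main obstacle to be the first half of Step~2. The commutator parts $a(bc-cb)$ are not directly reducible, so monomials whose right factor has degree $\ge 2$ and is not symmetrized need separate treatment. My first attempt would be to place such monomials in the outer right slot of the defining identity, $(wa)b=-w(a\circ b)$ with $\deg w\ge 2$, and use right-commutativity on the left side to trade them for left-normed terms. If the hand bookkeeping gets unwieldy, I would fall back on the finite linear-algebra verification. In that approach I would generate all degree-$5$ consequences of the two degree-$3$ identities, obtained by substituting products into each variable slot and multiplying degree-$4$ consequences by a generator on either side. I would then check that they span all $14\cdot 120$ multilinear monomials, which is exactly what the Albert computation certifies.
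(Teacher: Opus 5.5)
\textbf{Verdict.} Your degree-$4$ analysis is on target, and your fallback (a rank computation over all degree-$5$ consequences) is exactly the paper's proof, which just cites an Albert computation. The hand argument for degree $5$, however, has a genuine gap.

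\textbf{Degree $4$.} Indeed $((ab)c)d=-a(b(c\circ d))\neq 0$. The only overlap of the rewriting rule $(uv)w\mapsto -u(v\circ w)$ in degree $4$ is $((ab)c)d$ itself. It only identifies $a(b(c\circ d))=a(c(b\circ d))=a(d(b\circ c))$, which leaves a $16$-dimensional multilinear component.

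\textbf{The gap in Step 2.} Every monomial of $(xy)z+x(yz)+x(zy)$, and of $(xy)z-(xz)y$, begins with the same generator $x$. Hence the ideal of consequences is homogeneous with respect to the leftmost generator, and each multilinear component of $\mathfrak{Ma}$-$\Zinb\<X\>$ splits as a direct sum over the leftmost letter. Substituting $xy$ or $x(y\circ z)$ into the first slot produces only terms beginning with $x$. So no relation $L(x;y,\dots)=-L(y;x,\dots)$ can come out of your manipulations, and any such relation would split into two separate relations anyway. The ``head transposition plus tail symmetry'' argument therefore cannot be the mechanism.

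The first half of Step 2 is also not carried out. As written, $(wa)b=-w(a\circ b)$ only handles anticommutator right factors, and you do not name any instance that controls the commutator parts $a(b(c[d,e]))$. In degree $4$ the analogous parts $a(b[c,d])$ are genuinely independent of left-normed monomials, so a specific new degree-$5$ relation is required.

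\textbf{The missing idea.} It lives inside a fixed leftmost letter. Reducing $((ab)c)d$ outer-first and inner-first gives $a(b(c\circ d))-a(c(b\circ d))-a(d(b\circ c))=-a(b(c\circ d))-a(c(b\circ d))+a(d(b\circ c))$. Hence $a(b(c\circ d))=a(d(b\circ c))$ for \emph{all} elements, i.e.\ $a(b(c\circ d))$ is symmetric in $b,c,d$.

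Now substitute $d\mapsto de$, which is the overlap $((ab)c)(de)$. We have $(de)c=-d(c\circ e)$. By the symmetry just proved, $(de)(b\circ c)=-d(e(b\circ c))+d(b(c\circ e))+d(c(b\circ e))=d(b(c\circ e))$. This yields
\[
a\big(b(c(de))\big)=U(a;b)+U(a;d),\qquad U(a;x):=a\big(x(y(z\circ w))\big),\ \{x,y,z,w\}=\{b,c,d,e\},
\]
where $U(a;x)$ is well defined by the degree-$4$ symmetry.

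Add this to the same identity with $d,e$ swapped. The left side becomes $a(b(c(d\circ e)))=U(a;b)$, so $U(a;b)+U(a;d)+U(a;e)=0$ for every three of the four letters. Hence all $U(a;x)$ coincide and $3U=0$, so $U=0$ and $a(b(c(de)))=0$. Since the defining identity rewrites every monomial into right-normed ones, degree $5$ vanishes for $\operatorname{char}\Bbbk\neq 2,3$. This gives a short computer-free proof and makes the reduction to left-normed monomials unnecessary.
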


In 1972, Kantor introduced $\big($left$\big)$ conservative algebras as a generalization of Jordan algebras \cite{K72}
 (also, see surveys on recent studies of conservative algebras and superalgebras \cite{k23}). Namely, a vector space ${\rm V}$ with a multiplication $\cdot$ is called a $\big($left$\big)$ {\it conservative algebra} if there is a new multiplication $*:{\rm V}\times {\rm V}\rightarrow {\rm V}$ such that
\begin{center}
$\big[L_b^\cdot, [L_a^\cdot,\cdot]\big]\ =\ -\big[L_{a*b}^\cdot,\cdot\big],$
\ where \ 
$\big[L_c^\cdot, {\rm F}\big] (x,y)\ =\ c\cdot{\rm F}(x, y)-{\rm F}(c\cdot x,  y)-{\rm F}(x, c\cdot y).$
\end{center}
In other words, the following identity holds for all $a,b,x,y\in {\rm V}$:
\begin{multline*}
b\big(a(xy)-(ax)y-x(ay)\big)-
a\big((bx)y\big)+\big(a(bx)\big)y+(bx)(ay)\\
-a\big(x(by)\big)+(ax)(by)+x\big(a(by)\big)
+(a *b)(xy)-\big((a* b)x\big)y-x\big((a *b)y\big) \ =\ 0.
\end{multline*}

A result from \cite{KLP15} states that each (left) Zinbiel algebra is conservative. Below, we have a similar statement for ${\rm anti}$-Zinbiel algebras.

\begin{proposition}
Each $\big($left$\big)$ $\mathfrak{Ma}$-Zinbiel algebra is a $\big($left$\big)$ conservative algebra  with the  additional multiplication given by $a*b\ =\ -\ (ab+ba).$
\end{proposition}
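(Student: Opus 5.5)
The plan is to check the conservative identity directly in degree $4$. After substituting $a*b=-(ab+ba)$, every term of the identity is a nonassociative monomial of degree exactly $4$ in $a,b,x,y$. So it suffices to show that the left-hand side vanishes in the multilinear component $\mathfrak{Ma}\text{-}\Zinb(4)$ of the free algebra. I would do this by rewriting everything into a normal form in degree $4$.

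\textbf{Step 1: tools and reduction to right-normed words.} The tools are the defining identity
\[
(uv)w=-\,u(vw+wv)
\]
and right-commutativity $(uv)w=(uw)v$ from Proposition~\ref{arz}. Applying the defining identity left to right pushes brackets to the right. Every degree-$4$ monomial then becomes a combination of right-normed words $u(v(wt))$:
\begin{itemize}
\item $u\big((vw)t\big)=-u\big(v(wt)+v(tw)\big)$;
\item $(uv)(wt)=-u\big(v(wt)+(wt)v\big)$, where $(wt)v$ is reduced again;
\item $\big(u(vw)\big)t$ and $\big((uv)w\big)t$ are reduced from the outermost bracket inward.
\end{itemize}

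\textbf{Step 2: relations among right-normed words.} The $24$ right-normed multilinear words need not be independent. To find the relations, I would compute the same degree-$4$ monomial in two ways, which amounts to listing the degree-$4$ consequences of the identities. For example:
\begin{itemize}
\item rewrite $((uv)w)t$ once with the outer bracket first and once with the inner bracket first;
\item compare $((uv)w)t$ with $((uw)v)t$ using right-commutativity.
\end{itemize}
Expressing both sides in right-normed words gives linear relations. Row-reducing them yields a basis of $\mathfrak{Ma}\text{-}\Zinb(4)$ together with a reduction map onto that basis.

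\textbf{Step 3: expanding the conservative identity.} I would split the identity into its three groups:
\begin{itemize}
\item $b\big(a(xy)-(ax)y-x(ay)\big)$;
\item the six mixed terms;
\item the three terms containing $a*b$.
\end{itemize}
In the last group, $a*b=-(ab+ba)$ gives, for instance, $(a*b)(xy)=-(ab)(xy)-(ba)(xy)$. Each such product $(ab)(xy)$, $((ab)x)y$, $x((ab)y)$ collapses by Step 1 into terms beginning $a(\dots)$ or $b(\dots)$. The guiding heuristic is that the defining identity converts a left factor $uv$ into $-u(v\circ\,\cdot\,)$. The symmetrized $a*b$ is exactly what is needed to absorb the terms in which $a$ and $b$ appear as outer left factors, mirroring the Zinbiel case of \cite{KLP15}, where $a*b=ab+ba$ up to sign.

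I would then reduce every term to the basis from Step 2 and check that all coefficients vanish.

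\textbf{Main obstacle.} The hardest part is Step 2: getting the full set of degree-$4$ relations right, since an omitted relation would leave spurious nonzero coefficients. As a safeguard, I would first try to prove the smaller auxiliary identity $u\big(v(wt)\big)=-u\big(v(tw)\big)$ (or the corresponding symmetric-vanishing statement suggested by \eqref{ALeib1}-type behaviour) by hand. I would then cross-check the final cancellation with Albert, as the paper does for the other degree-$3$ and degree-$4$ statements. Nilpotency of index $5$ is not needed, since the identity is homogeneous of degree $4$.
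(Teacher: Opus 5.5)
\textbf{Gap: you are computing in the wrong-handed variety.} In Step 1 you use $(uv)w=-u(vw+wv)$ and right-commutativity, which is the right-handed $\mathfrak{Ma}$-Zinbiel identity from the table. But the identity you expand in Step 3 is the \emph{left} conservative identity, built from $L_a$ and $L_b$. The statement, and the paper's proof (which starts from left-commutativity), concern the mirror variety $u(vw)=-(uv+vu)w$. In the free right-handed algebra the left conservative identity is false, so your final coefficient check cannot come out zero. Comparing the two reductions of $((uv)w)t$ shows that the only multilinear degree-$4$ relations among right-normed words are $u\big(v(w\circ t)\big)=u\big(t(v\circ w)\big)$. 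These preserve the first letter, and in any combination of them $a(b(xy))$ and $a(b(yx))$ occur with equal coefficients. Now collect the terms of the identity with leftmost letter $a$: $-a((bx)y)$, $(a(bx))y$, $-a(x(by))$, $(ax)(by)$, and, from $a*b=-(ab+ba)$, also $-(ab)(xy)$ and $((ab)x)y$. They reduce to
\[
2\,a\big(b(xy)\big)+a\big(b(yx)\big)-2\,a\big(x(by)\big)-a\big(y(bx)\big),
\]
which is not in the span of those relations. Your proposed auxiliary identity $u(v(wt))=-u(v(tw))$ fails there as well. The relations only identify the three sums $a\big(p(q\circ r)\big)$ with one another, so none of them vanishes.

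\textbf{Repair.} Work in the left-handed variety. There, left-commutativity and the defining identity say $L_uL_v=L_vL_u$ and $L_uL_v=L_{u*v}$ with $u*v=-(uv+vu)$. These are exactly the hypotheses of the criterion from \cite{KLP15} that the paper invokes, and with them no normal form is needed. Put $c=a*b$. The terms $b(a(xy))$, $-b(x(ay))$, $-a(x(by))$, $x(a(by))$, $c(xy)$, $-x(cy)$ all equal $\pm x(cy)$, with signs summing to $0$. Meanwhile $-b((ax)y)=-(ax)(by)$, $-a((bx)y)=-(bx)(ay)$ and $(a(bx))y=(cx)y$ cancel against $(ax)(by)$, $(bx)(ay)$ and $-(cx)y$. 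Your degree-$4$ normal-form computation would also succeed once everything is mirrored, or if you kept the right-handed algebra and checked the right conservative identity. The twelve-term cancellation above makes it unnecessary.
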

\begin{proof}
Thanks to Proposition \ref{arz},
each left $\mathfrak{Ma}$-Zinbiel algebra is left-commutative.
Thanks to \cite{KLP15}, each left-conservative algebra
that satisfies a quasi-identity 
\begin{center}
    $a(bx)\  =\  {\rm F}(a, b)x$
\end{center} for a new multiplication $F$ is a conservative. 
Hence, our left $\mathfrak{Ma}$-Zinbiel algebra is conservative with the additional multiplication  ${\rm F}(a,b)\ =\ -\ (ab+ba).$
 \end{proof}

\subsection{$\mathfrak{Ma}$-Novikov algebras}

Now we consider the algebra $\mathfrak{Ma}\text{-}$${\rm Nov}\<X\>$ and describe several properties.

\begin{lemma}
The algebra $\mathfrak{Ma}\text{-}$${\rm Nov}\<X\>$ satisfies the following identities:
\begin{align}
(ad)(bc)&=a\big((bd)c\big)=b\big((ad)c\big), \label{ANovId1}\\
\big(a(cd)\big)b&=a\big(c(db)\big), \label{ANovId2}\\
(db)\big((ce)a\big)&=c\big(((de)b)a\big), \label{ANovId3}\\
c\big(((de)b)a\big)&=d\big(((cb)e)a\big), \label{ANovId4}
\end{align}
\end{lemma}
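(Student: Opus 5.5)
The plan is to derive all four identities as consequences of the two defining identities of $\mathfrak{Ma}$-${\rm Nov}$, namely
\[
\mathrm{(R)}\quad (xy)z+x(yz)=(xz)y+x(zy),
\qquad
\mathrm{(L)}\quad x(yz)=y(xz).
\]
Every consequence of degree $4$ is obtained in one of two ways. Either one substitutes a product $uv$ for one of the variables in (R) or (L), or one multiplies (R) or (L) by a new variable on the left or on the right. Degree $5$ consequences are obtained by iterating this. So the proof is a finite linear-algebra problem in the multilinear components of degrees $4$ and $5$. I would first isolate the easy parts by hand and then organise the rest as an elimination, which can be checked independently with Albert, in the same spirit as the earlier lemmas of the paper.

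First, the second equality in \eqref{ANovId1} is immediate. Applying (L) with $x=ad$, $y=b$, $z=c$ gives $(ad)(bc)=b\big((ad)c\big)$.

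For the first equality in \eqref{ANovId1} and for \eqref{ANovId2}, I would write out the following relations in the four letters $a,b,c,d$:
\begin{itemize}
\item the instances of (R) with one slot replaced by a product, for example with $x=a$, $y=d$, $z=bc$, which relates $(ad)(bc)$, $a\big(d(bc)\big)$, $\big(a(bc)\big)d$ and $a\big((bc)d\big)$;
\item (R) multiplied on the left by a letter, for example $a\big((bd)c+b(dc)\big)=a\big((bc)d+b(cd)\big)$;
\item (R) multiplied on the right by a letter;
\item the corresponding instances of (L).
\end{itemize}
With (L) one normalises every right-nested word $x\big(y(zt)\big)$ and every word $x\big((yz)t\big)$ that appears. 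Then one eliminates the left-nested words $\big((xy)z\big)t$ and $(xy)z\cdot t$ by means of (R) multiplied on the right. The expected outcome is that $(ad)(bc)-a\big((bd)c\big)$ and $\big(a(cd)\big)b-a\big(c(db)\big)$ lie in the span of these consequences. I expect the key combination for \eqref{ANovId2} to be (R) with $x=a$, $y=cd$, $z=b$, combined with (L) applied inside $a(\,\cdot\,)$ and with \eqref{ANovId1} once that identity is established.

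For \eqref{ANovId3} and \eqref{ANovId4} I would work in degree $5$ and use \eqref{ANovId1} and \eqref{ANovId2} as rewriting rules. Applying \eqref{ANovId1} with $c$ replaced by the product $ea$ turns $(db)\big((ce)a\big)$ into a word of the form $u\big((vw)t\big)$. Then \eqref{ANovId2} and (L) move the outer letters until the normal form $c\big(((de)b)a\big)$ appears. Identity \eqref{ANovId4} should then follow by symmetry. Apply \eqref{ANovId3} twice, the second time with the roles of $c,d$ and of $b,e$ interchanged. Together with the symmetry $(db)\big((ce)a\big)=(ce)\big((db)a\big)$-type relations supplied by \eqref{ANovId1}, this connects $c\big(((de)b)a\big)$ with $d\big(((cb)e)a\big)$.

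The main obstacle is finding the explicit linear combination for the first equality in \eqref{ANovId1}. The span of degree-$4$ consequences is large, and the anti-sign in (R) spoils the usual Novikov shortcuts. If a short hand derivation does not appear, I would fall back on the method used for the analogous lemmas on $\mathfrak{Ma}$-${\rm Leib}$. That method is to generate all degree $4$ and degree $5$ consequences and verify membership by Gaussian elimination in Albert.
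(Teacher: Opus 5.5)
Your proposal is correct. The paper gives no derivation at all: all four identities are simply checked in Albert, so your fallback is exactly the paper's proof. Your hand reductions are a genuinely different route, and they can in fact be completed without any computer.

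\textbf{The first equality in \eqref{ANovId1}.} The step you flag as the main obstacle is short. Subtract $\mathrm{(R)}$ for $(x,y,z)=(b,d,ac)$ from $\mathrm{(R)}$ for $(a,d,bc)$. Use $(a(bc))d=(b(ac))d$ (from $\mathrm{(L)}$ multiplied by $d$ on the right) and $a(d(bc))=b(d(ac))$ (by $\mathrm{(L)}$). This gives $(ad)(bc)-a((bc)d)=(bd)(ac)-b((ac)d)$. Rewrite $(ad)(bc)=b((ad)c)$ and $(bd)(ac)=a((bd)c)$ by $\mathrm{(L)}$. Then eliminate $b((ac)d)$ and $a((bc)d)$ with $\mathrm{(R)}$ multiplied on the left; the correction terms $b(a(dc))-b(a(cd))$ and $a(b(dc))-a(b(cd))$ coincide by $\mathrm{(L)}$. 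What remains is
\[
2\,b\big((ad)c\big)=2\,a\big((bd)c\big),
\]
which also shows precisely where $\operatorname{char}\Bbbk\neq 2$ is used.

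\textbf{\eqref{ANovId2} and \eqref{ANovId4}.} Your combination for \eqref{ANovId2} works once you also apply $\mathrm{(R)}$ multiplied on the left by $a$ to $(cb)d-(cd)b$. Your derivation of \eqref{ANovId4} from \eqref{ANovId3} and $\mathrm{(L)}$ is right.

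\textbf{\eqref{ANovId3}.} Here the substitution you name is wrong. The word $(db)\big((ce)a\big)$ is the instance $b\mapsto db$, $a\mapsto c$, $d\mapsto e$, $c\mapsto a$ of $b((ad)c)$, so \eqref{ANovId1} yields $c\big(((db)e)a\big)$. You then still need $\mathrm{(R)}$ for $(d,b,e)$, multiplied by $a$ on the right and by $c$ on the left. After that, \eqref{ANovId2} turns the difference into $c\big(d(e(ba))\big)-c\big(d(b(ea))\big)$, which vanishes by $\mathrm{(L)}$.

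\textbf{Comparison.} The paper's machine check is uniform with its other lemmas and needs no insight. Your route gives a human-checkable proof and makes the dependence on the characteristic visible. It also shows that the degree-$5$ identities follow formally from the degree-$4$ ones \eqref{ANovId1} and \eqref{ANovId2}, via a few more applications of $\mathrm{(R)}$ and $\mathrm{(L)}$.
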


\begin{proof}
These identities were verified by computer algebra program Albert.
\end{proof}

We also need a more general relation of \eqref{ANovId4}.

\begin{lemma}\label{ANovGenId}
For any $n\ge 5$, the following identity holds in $\mathfrak{Ma}\text{-}$${\rm Nov}\<X\>$:
\[
x_{i_1}\bigl(((\cdots((x_{i_{2}}x_{i_{3}})x_{i_{4}})\cdots)x_{i_{n-1}})x_{i_n}\bigr)
=
x_{i_2}\bigl(((\cdots((x_{i_{1}}x_{i_{n-1}})x_{i_{n-2}})\cdots)x_{i_{3}})x_{i_n}\bigr).
\]
\end{lemma}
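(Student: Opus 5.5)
We argue by induction on $n$. The base case $n=5$ is exactly identity \eqref{ANovId4}. Substitute $c=x_{i_1}$, $d=x_{i_2}$, $e=x_{i_3}$, $b=x_{i_4}$, $a=x_{i_5}$. Then \eqref{ANovId4} reads
\[
x_{i_1}\bigl(((x_{i_2}x_{i_3})x_{i_4})x_{i_5}\bigr)=x_{i_2}\bigl(((x_{i_1}x_{i_4})x_{i_3})x_{i_5}\bigr),
\]
which is the statement for $n=5$. Since \eqref{ANovId1}--\eqref{ANovId4} are identities of the free algebra $\mathfrak{Ma}\text{-}{\rm Nov}\<X\>$, we may substitute arbitrary elements for their variables, not only generators. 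We will use this freely.

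For the inductive step, put $U=(\cdots(x_{i_2}x_{i_3})\cdots)x_{i_{n-2}}$, so the left-hand side is $x_{i_1}\bigl((Ux_{i_{n-1}})x_{i_n}\bigr)$. The $\mathfrak{Ma}$-right-symmetric identity $\{x,y,z\}=\{x,z,y\}$ gives
\[
(Ux_{i_{n-1}})x_{i_n}=(Ux_{i_n})x_{i_{n-1}}+U(x_{i_n}x_{i_{n-1}})-U(x_{i_{n-1}}x_{i_n}).
\]
This lets us trade the last two right factors, at the cost of correction terms of the form $x_{i_1}\bigl(U(x_px_q)\bigr)$. The first key step is to show that these correction terms are harmless, i.e.\ that $x_{i_1}\bigl(U(x_px_q)\bigr)$ is symmetric in $p,q$, or vanishes, once $\deg U\ge 2$.

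To do this, write $U=U'x_{i_{n-2}}$ and use \eqref{ANovId1} in the form $(ad)(bc)=a((bd)c)$ to rewrite $U(x_px_q)$ as a right-nested word beginning with $U'$. Then apply \eqref{ANovId2}, $\bigl(a(cd)\bigr)b=a\bigl(c(db)\bigr)$, together with left-commutativity $x(yz)=y(xz)$, to the outer factor $x_{i_1}$. The goal is to bring these terms to a normal form in which the $p,q$-symmetry (or cancellation) is visible.

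The second key step is the main application of the hypothesis. Regard $x_{i_{n-1}}x_{i_n}$-type tails as a single "last letter". More precisely, apply the inductive hypothesis for $n-1$ to $x_{i_1}\bigl((U x_{i_{n-1}})\,\ast\bigr)$ with the final generator replaced by an arbitrary element, which is legitimate by the substitution remark. This moves $x_{i_1}$ into the innermost position and reverses $x_{i_3},\dots,x_{i_{n-2}}$. We then combine it with the base identity \eqref{ANovId4}, applied with $e$ and $b$ replaced by suitable composite elements, to reverse the remaining block and to place $x_{i_{n-1}}$ correctly just after $x_{i_1}$. Composing these steps, and using the swap formula above to restore $x_{i_n}$ as the outermost right factor, should yield the right-hand side
\[
x_{i_2}\bigl(((\cdots((x_{i_1}x_{i_{n-1}})x_{i_{n-2}})\cdots)x_{i_3})x_{i_n}\bigr).
\]

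The main obstacle is the first key step: controlling the correction terms $x_{i_1}\bigl(U(x_px_q)\bigr)$. If the symmetry does not follow directly from \eqref{ANovId1}--\eqref{ANovId2}, the fallback is to prove an auxiliary identity by a separate induction on $\deg U$: that $x\bigl(U(yz)\bigr)$ equals a right-nested normal form symmetric in $y,z$ for $\deg U\ge 2$. Its base case would be checked with \eqref{ANovId1} and \eqref{ANovId3}, in the same way the degree-$5$ identities were obtained.
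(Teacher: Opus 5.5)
Your base case is fine: \eqref{ANovId4} is literally the case $n=5$. Your remark that composite elements may be substituted into \eqref{ANovId1}--\eqref{ANovId4} is also correct. The inductive step, however, is not an argument, and both of its key steps fail as described.

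First, the claim that the correction terms are ``harmless'' is false. By $\{x,y,z\}=\{x,z,y\}$, they cancel exactly when $x_{i_1}\bigl((Ux_{i_{n-1}})x_{i_n}\bigr)$ is symmetric in its last two letters, and this already fails for $U=x_2x_3$. Here the difference equals $x_1\bigl((x_2x_3)(x_5x_4)\bigr)-x_1\bigl((x_2x_3)(x_4x_5)\bigr)$. Left-commutativity and \eqref{ANovId1} give $x_1\bigl((x_2x_3)(x_px_q)\bigr)=x_1\bigl(x_2((x_px_3)x_q)\bigr)$. One more use of $\{x,y,z\}=\{x,z,y\}$ and left-commutativity then yields
\begin{multline*}
x_1\bigl(((x_2x_3)x_4)x_5\bigr)-x_1\bigl(((x_2x_3)x_5)x_4\bigr)\\
=x_1\bigl(x_2((x_5x_4)x_3)\bigr)-x_1\bigl(x_2((x_4x_5)x_3)\bigr)-x_1\bigl(x_2(x_3(x_5x_4))\bigr)+x_1\bigl(x_2(x_3(x_4x_5))\bigr).
\end{multline*}
These are four distinct monomials of $\mathcal{AN}$, so the difference is nonzero by Theorem~\ref{ThmANovBasis}.

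Second, the induction hypothesis never gets a foothold. After the swap, the main term $x_{i_1}\bigl((Ux_{i_n})x_{i_{n-1}}\bigr)$ still has a left-normed inner word with $n-1$ letters. The case $n-1$ of the lemma only concerns inner words with $n-2$ letters. Nor is the term of the form $x_{i_1}(U\ast)$ with composite $\ast$, since $(Ux_{i_n})x_{i_{n-1}}\neq U(x_{i_n}x_{i_{n-1}})$. The swap only permutes the last two letters and never shortens anything, so your second step has nothing to reduce.

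The missing observation is that no induction is needed: your substitution remark already gives a direct proof, which is the paper's. Put $W=(\cdots(x_{i_2}x_{i_3})\cdots)x_{i_{n-1}}$, $W=W'x_{i_{n-1}}$, $W'=W''x_{i_{n-2}}$, and so on.

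Left-commutativity turns the word into a product of two composite factors:
\[
x_{i_1}(Wx_{i_n})=W(x_{i_1}x_{i_n}).
\]
Now apply \eqref{ANovId1} in the form $(ad)(bc)=a\bigl((bd)c\bigr)$, with $a$ and $b$ composite. Each application moves the last letter of the left factor next to $b$ in the right factor:
\begin{gather*}
(W'x_{i_{n-1}})(x_{i_1}x_{i_n})=W'\bigl((x_{i_1}x_{i_{n-1}})x_{i_n}\bigr),\\
(W''x_{i_{n-2}})\bigl((x_{i_1}x_{i_{n-1}})x_{i_n}\bigr)=W''\bigl(((x_{i_1}x_{i_{n-1}})x_{i_{n-2}})x_{i_n}\bigr),
\end{gather*}
and so on. After $n-3$ steps, the left factor is $x_{i_2}$ and the right factor is $\bigl(\cdots((x_{i_1}x_{i_{n-1}})x_{i_{n-2}})\cdots x_{i_3}\bigr)x_{i_n}$, which is exactly the right-hand side. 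This argument works for every $n\ge 4$, and for $n=5$ it derives \eqref{ANovId4} rather than assuming it.
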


\begin{proof}
Using the left-commutative identity and \eqref{ANovId1}, we obtain
\begin{multline*}
x_{i_1}\bigl(((\cdots((x_{i_{2}}x_{i_{3}})x_{i_{4}})\cdots)x_{i_{n-1}})x_{i_n}\bigr)
=
\big((\cdots((x_{i_{2}}x_{i_{3}})x_{i_{4}})\cdots)x_{i_{n-1}}\big)(x_{i_1}x_{i_n})
\\
=
\big((\cdots((x_{i_{2}}x_{i_{3}})x_{i_{4}})\cdots)x_{i_{n-2}}\big)\bigl((x_{i_1}x_{i_{n-1}})x_{i_n}\bigr).
\end{multline*}
Now we apply \eqref{ANovId1} repeatedly:
\begin{multline*}
\big((\cdots((x_{i_{2}}x_{i_{3}})x_{i_{4}})\cdots)x_{i_{n-2}}\big)\bigl((x_{i_1}x_{i_{n-1}})x_{i_n}\bigr)
\\
=
\big((\cdots((x_{i_{2}}x_{i_{3}})x_{i_{4}})\cdots)x_{i_{n-3}}\big)\bigl(((x_{i_1}x_{i_{n-1}})x_{i_{n-2}})x_{i_n}\bigr)
=\ldots
\\
=
x_{i_2}\bigl(((\cdots((x_{i_{1}}x_{i_{n-1}})x_{i_{n-2}})\cdots)x_{i_{3}})x_{i_n}\bigr).
\end{multline*}
\end{proof}

\begin{theorem}\label{ThmANovBasis}
The basis of $\mathfrak{Ma}\text{-}$${\rm Nov}\<X\>$ coincides with the basis of the free bicommutative algebra ${\rm BiCom}\<X\>$.
\end{theorem}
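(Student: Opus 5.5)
Before planning, I fix the reference point. Recall that ${\rm BiCom}\<X\>$ (left- and right-commutative algebras) has the monomial basis of \cite{IsmDzhuma}: the generators $x_i$, and the elements
\[
x_{i_1}\bigl((\cdots((x_{i_2}x_{i_3})x_{i_4})\cdots)x_{i_n}\bigr), \qquad n\ge 2,
\]
with $x_{i_1}\le x_{i_2}$ (after ordering) and $x_{i_3}\le\cdots\le x_{i_n}$. Equivalently, in the multilinear component of degree $n\ge 2$ the dimension is $2^n-2$. My plan is to show that the same indexing set of monomials (suitably normalized) is a basis of $\mathfrak{Ma}\text{-}{\rm Nov}\<X\>$. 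The argument has a spanning part and an independence part, in the spirit of the proof of Theorem~\ref{BasisALeib}.

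\emph{Spanning.}
\begin{itemize}
\item The defining identities of $\mathfrak{Ma}$-Novikov algebras are left-commutativity $x(yz)=y(xz)$ and $\{x,y,z\}=\{x,z,y\}$. From them I first derive that right multiplications behave commutatively on long enough elements.
\item The identities \eqref{ANovId1}--\eqref{ANovId2} allow every monomial whose rightmost factor has degree $\ge 2$ to be rewritten in the form $a\bigl((\cdots)\,\cdot\bigr)$. For instance, $(ad)(bc)=a((bd)c)$, and $(a(cd))b=a(c(db))$.
\item Using these together with left-commutativity, I will show by induction on degree that every monomial of degree $n\ge 3$ is a linear combination of the right-normed-inside words $x_{i_1}\bigl((\cdots(x_{i_2}x_{i_3})\cdots)x_{i_n}\bigr)$, plus left-normed words $(\cdots(x_{i_1}x_{i_2})\cdots)x_{i_n}$.
\item Left-normed words are reduced by the anti-Novikov relation $(xy)z=-x(yz)+(xz)y+x(zy)$, which pushes them into the first family modulo permuting the last two letters.
\item Within the first family, left-commutativity lets me order the outer letter against the inner leading letter. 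Lemma~\ref{ANovGenId} together with \eqref{ANovId3}--\eqref{ANovId4} lets me permute the middle letters $x_{i_3},\dots,x_{i_{n-1}}$.
\item A separate check is needed that the last letter $x_{i_n}$ can also be sorted. I expect to get this from \eqref{ANovId1} by moving $x_{i_n}$ into a product $x_{i_1}x_{i_n}$ and back.
\item Degrees $\le 4$ are handled directly, which is why Lemma~\ref{ANovGenId} starts at $n\ge5$.
\end{itemize}

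\emph{Independence.} Two routes are available.
\begin{itemize}
\item \emph{Model route (preferred).} Construct an explicit algebra on the span of the candidate basis, with multiplication given by normal-form rules, mimicking the construction in Theorem~\ref{BasisALeib}. Then verify left-commutativity and the anti-associator symmetry. For an $x_{i_1}$-prefixed word, the natural model is: multiplying two such words of degree $\ge 2$ gives the sorted merge, with a sign governed by the anti-associator.
\item \emph{Fallback route.} Verify a dimension count. The multilinear components $\mathfrak{Ma}\text{-}{\rm Nov}(n)$ have dimension $2^n-2$ for small $n$ by Albert, and one would then use an operadic argument to bound the dimension from below.
\end{itemize}
I would try the model route first and confirm the small-degree dimensions by Albert as a sanity check.

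\emph{Main obstacle.} I expect the crux to be the spanning step for the \emph{last} letter and the interaction of signs. In the anti-setting, rewriting can produce $w=-w$ type cancellations. I must make sure that no basis word is forced to vanish, which is exactly what the model must certify, and conversely that all orderings are genuinely equal rather than equal up to sign. Establishing the sign-free equalities like Lemma~\ref{ANovGenId} for every position, and checking the model satisfies the anti-associator identity, is where the real work lies.
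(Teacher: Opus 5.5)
\textbf{There is a genuine gap: the normal form you aim for is not the basis of ${\rm BiCom}\<X\>$, and it is too small to span $\mathfrak{Ma}\text{-}\Nov\<X\>$.}

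The basis from \cite{IsmDzhuma} (the set $\mathcal{AN}$ in the paper) consists of the words $x_{i_1}\bigl(x_{i_2}(\cdots(x_{i_k}((\cdots(x_{i_{k+1}}x_{i_{k+2}})\cdots)x_{i_n}))\cdots)\bigr)$ with \emph{arbitrary} $k\ge 0$, $i_1\le\cdots\le i_{k+1}$ and $i_{k+2}\ge\cdots\ge i_n$. A multilinear word of this kind amounts to splitting the letters into a nonempty block $\{i_1,\dots,i_{k+1}\}$ and a nonempty tail block, which is where $2^n-2$ comes from.

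Your family $x_{i_1}\bigl((\cdots(x_{i_2}x_{i_3})\cdots)x_{i_n}\bigr)$ is only the stratum $k=1$. It has $\binom{n}{2}$ multilinear elements ($3$, not $6$, for $n=3$), so your ``equivalently, the dimension is $2^n-2$'' is false. Even after adjoining the left-normed words ($k=0$) in your spanning step, you have $\binom{n}{2}+n$ words, i.e.\ $10$ for $n=4$, while $\dim\mathfrak{Ma}\text{-}\Nov(4)=2^4-2=14$. So the reduction claimed in your third bullet cannot be carried out. What is missing is, for instance, the right-nested words $x_{i_1}\bigl(x_{i_2}(x_{i_3}x_{i_4})\bigr)$: by left-commutativity they are symmetric in $x_{i_1},x_{i_2},x_{i_3}$, giving four of them in the multilinear case. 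Left-multiplying the anti-Novikov identity ties them to your words only through differences such as $x\bigl((yz)w\bigr)-x\bigl((yw)z\bigr)=x\bigl(y(wz)\bigr)-x\bigl(y(zw)\bigr)$. Your independence model would then be built on the wrong index set.

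\textbf{Several tool assignments are also off, and they matter once the target is corrected.}

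Left-commutativity alone cannot exchange the outer letter with the head of the inner left-normed word. It only gives $x_{i_1}(Yx_{i_n})=Y(x_{i_1}x_{i_n})$, and getting back to a word $x_{i_2}(\cdots)$ needs \eqref{ANovId1} repeatedly. That exchange is exactly Lemma~\ref{ANovGenId} (or \eqref{ANovId1} when $n=4$). The lemma swaps $x_{i_1},x_{i_2}$ while \emph{reversing} $x_{i_3},\dots,x_{i_{n-1}}$; it does not permute the middle letters.

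In the paper, the roles are as follows:
\begin{itemize}
\item left-commutativity sorts the right-nested prefix $x_{i_1},\dots,x_{i_k}$;
\item Lemma~\ref{ANovGenId} exchanges $x_{i_k}$ and $x_{i_{k+1}}$, so the whole block $x_{i_1},\dots,x_{i_{k+1}}$ can be sorted;
\item the tail, including its last letter, is sorted only modulo smaller words, by induction.
\end{itemize}
Exact sign-free equalities for tail permutations do not exist, e.g.\ $(x_1x_2)x_3-(x_1x_3)x_2=x_1(x_3x_2)-x_1(x_2x_3)$, so you should not look for them. The paper works with $\equiv$ (equality modulo deg-lex smaller words) and treats products $A_pA_{n-p}$ of monomials of degrees $p$ and $n-p$ by shape:
\begin{itemize}
\item $A_1A_{n-1}$, reordered as above;
\item $A_tA_{n-t}$ with $t,n-t>1$, which are $\equiv 0$ by \eqref{ANovId1};
\item $A_{n-1}A_1$, which are $\equiv 0$ by \eqref{ANovId2} unless left-normed.
\end{itemize}

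For the same reason, the multiplication of the model cannot be a signed sorted merge. Reinserting the last letter of the right factor into the nonincreasing tail produces an alternating sum of lower words, e.g.\ $((ac)b)d=((ad)c)b+a((dc)b)-a((bd)c)-a(b(dc))+a(b(cd))$. With $\mathcal{AN}$ as the target, your two-step strategy (rewriting for spanning, an explicit model for independence) is the one the paper uses.
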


Before proving Theorem~\ref{ThmANovBasis}, let us recall the basis of ${\rm BiCom}\<X\>$ given in \cite{IsmDzhuma}.
A monomial $m$ belongs to the basis of ${\rm BiCom}\<X\>$ if and only if it has the form
\[
m=x_{i_1}\bigl(x_{i_2}(\cdots(x_{i_k}((\cdots((x_{i_{k+1}}x_{i_{k+2}})x_{i_{k+3}})\cdots)x_{i_n}))\cdots)\bigr),
\]
where
\[
i_1\le i_2\le \cdots \le i_{k+1}
\qquad\text{and}\qquad
i_{k+2}\ge i_{k+3}\ge \cdots \ge i_n.
\]
Denote this set of monomials by $\mathcal{AN}$.

\begin{proof}[Proof of Theorem~\ref{ThmANovBasis}]
First, we show that every monomial in $\mathfrak{Ma}\text{-}$${\rm Nov}\<X\>$ can be rewritten as a linear combination of monomials from $\mathcal{AN}$.
We prove this by induction on the degree.

The base cases are $n=3$ and $n=4$. For $n=3$, we have
\[
x_{2}(x_{1}x_{3})=x_{1}(x_{2}x_{3}),\qquad
x_{3}(x_{2}x_{1})=x_{2}(x_{3}x_{1}),\qquad
x_{3}(x_{1}x_{2})=x_{1}(x_{3}x_{2}),
\]
and
\[
(x_{1}x_{2})x_{3}=(x_{1}x_{3})x_{2}+x_{1}(x_{3}x_{2})-x_{1}(x_{2}x_{3}),
\]
\[
(x_{2}x_{1})x_{3}=(x_{2}x_{3})x_{1}+x_{2}(x_{3}x_{1})-x_{1}(x_{2}x_{3}),
\]
\[
(x_{3}x_{1})x_{2}=(x_{3}x_{2})x_{1}+x_{2}(x_{3}x_{1})-x_{1}(x_{3}x_{2}).
\]

We endow $\mathfrak{Ma}\text{-}$${\rm Nov}\<X\>$ with the degree--lexicographical order.
We also write $m\equiv m'$ if $m$ and $m'$ have the same bracketing type and
$m-m'$ is a linear combination of basis monomials strictly smaller than $m$ in this order.
In degree $3$, this yields
\[
(x_{1}x_{2})x_{3}\equiv (x_{1}x_{3})x_{2},\qquad
(x_{2}x_{1})x_{3}\equiv (x_{2}x_{3})x_{1},\qquad
(x_{3}x_{1})x_{2}\equiv (x_{3}x_{2})x_{1}.
\]
As example, the identity \eqref{ANovId3} shows that
\[
(db)\big((ce)a\big)\equiv 0.
\]

For $n=4$, by the left-commutative identity, every monomial of the form
$x_{i_1}\bigl(x_{i_2}(x_{i_3}x_{i_4})\bigr)$ can be rewritten as one of the following monomials:
\[
x_{1}\bigl(x_{2}(x_{3}x_{4})\bigr),\quad
x_{1}\bigl(x_{2}(x_{4}x_{3})\bigr),\quad
x_{1}\bigl(x_{3}(x_{4}x_{2})\bigr),\quad
x_{2}\bigl(x_{3}(x_{4}x_{1})\bigr).
\]

For a monomial of the form $x_{i_1}\bigl((x_{i_2}x_{i_3})x_{i_4}\bigr)$, using the inductive hypothesis and \eqref{ANovId1}, we obtain
\[
x_{i_1}\bigl((x_{i_2}x_{i_3})x_{i_4}\bigr)\equiv
x_{i_1}\bigl((x_{i_2}x_{i_4})x_{i_3}\bigr)
=
x_{i_2}\bigl((x_{i_1}x_{i_4})x_{i_3}\bigr).
\]
In particular, modulo $\equiv$, we can permute the generators within the pairs $(x_{i_1},x_{i_2})$ and $(x_{i_3},x_{i_4})$ in order to achieve the required ordering.

Next, consider a monomial of the form $(x_{i_1}x_{i_2})(x_{i_3}x_{i_4})$.
By the left-commutative identity and the inductive hypothesis, we have
\[
(x_{i_1}x_{i_2})(x_{i_3}x_{i_4})\equiv 0,
\]
that is, there is no basis monomial with the same bracketing type, and this monomial can be expressed as a linear combination of smaller basis monomials.

Similarly, by \eqref{ANovId2} and the previous cases, we have
\[
\bigl(x_{i_1}(x_{i_2}x_{i_3})\bigr)x_{i_4}\equiv 0.
\]

Finally, using the defining identities of $\mathfrak{Ma}\text{-}$${\rm Nov}$ together with the previous reductions, we obtain
\[
\bigl((x_{i_1}x_{i_2})x_{i_3}\bigr)x_{i_4}
\equiv
\bigl((x_{i_1}x_{i_2})x_{i_4}\bigr)x_{i_3}
\equiv
\bigl((x_{i_1}x_{i_4})x_{i_2}\bigr)x_{i_3}.
\]
This completes the verification of the base case $n=4$.

Let $n\ge 5$. We consider products $A_pA_{n-p}$, where $A_k$ denotes an arbitrary monomial of degree $k$.

\smallskip
\noindent
\textbf{Case 1: $A_1A_{n-1}$.}
By the inductive hypothesis, every monomial of the form $A_1A_{n-1}$ can be written as
\[
x_{i_1}\Bigl(x_{i_2}\bigl(\cdots\bigl(x_{i_k}((\cdots((x_{i_{k+1}}x_{i_{k+2}})x_{i_{k+3}})\cdots)x_{i_n})\bigr)\cdots\bigr)\Bigr).
\]
Applying Lemma \ref{ANovGenId}, we obtain
\begin{multline*}
x_{i_1}\Bigl(x_{i_2}\bigl(\cdots\bigl(x_{i_k}((\cdots((x_{i_{k+1}}x_{i_{k+2}})x_{i_{k+3}})\cdots)x_{i_n})\bigr)\cdots\bigr)\Bigr)
\equiv\\
x_{i_1}\Bigl(x_{i_2}\bigl(\cdots\bigl(x_{i_{k+1}}((\cdots((x_{i_k}x_{i_{k+2}})x_{i_{k+3}})\cdots)x_{i_n})\bigr)\cdots\bigr)\Bigr).
\end{multline*}
Hence, using the left-commutative identity and Lemma \ref{ANovGenId}, we can reorder (modulo $\equiv$) the generators
\[
x_{i_1},x_{i_2},\ldots,x_{i_{k+1}}
\]
to achieve the required nondecreasing order.
Moreover, by the inductive hypothesis, the generators
\[
x_{i_{k+2}},x_{i_{k+3}},\ldots,x_{i_n}
\]
can also be ordered (modulo $\equiv$) in the t

\smallskip
\noindent
\textbf{Case 2: $A_tA_{n-t}$ with $t>1$ and $n-t>1$.}
By \eqref{ANovId1}, we have
\[
A_tA_{n-t}\equiv 0,
\]
that is, such monomials reduce to a linear combination of smaller basis monomials.

\smallskip
\noindent
\textbf{Case 3: $A_{n-1}A_1$.}
By \eqref{ANovId2}, every monomial in $A_{n-1}A_1$ vanishes modulo $\equiv$, except the fully left-normed monomial
\[
\big(\cdots((x_{i_1}x_{i_2})x_{i_3})\cdots\big)x_{i_n}.
\]
For a monomial of this form, by the previous reductions, we can permute the indices $i_2,\ldots,i_n$ modulo $\equiv$; more precisely,
\[
\big(\cdots((x_{i_1}x_{i_2})x_{i_3})\cdots\big)x_{i_n}\equiv
\big(\cdots((x_{i_1}x_{i_{\sigma(2)}})x_{i_{\sigma(3)}})\cdots\big)x_{i_{\sigma(n)}},
\qquad \sigma\in \mathbb{S}_{\{2,3,\ldots,n\}}.
\]
This completes the inductive step.

Let us consider the algebra $\mathcal{A}\langle X\rangle$ with basis $\mathcal{AN}$.
We define a bilinear multiplication on $\mathcal{A}\langle X\rangle$ on basis elements as follows.

Let $x_{i_{k+2}},\ldots,x_{i_{n}},y_{j_{l+2}},\ldots,y_{j_{m-1}}\geq y_{j_m}$,
\[
u=
x_{i_1}\bigl(x_{i_2}(\cdots(x_{i_k}L_x)\cdots)\bigr),
\qquad
L_x=\big(\cdots((x_{i_{k+1}}x_{i_{k+2}})x_{i_{k+3}})\cdots\big)x_{i_n},
\]
and
\[
v=
y_{j_1}\bigl(y_{j_2}(\cdots(y_{j_l}L_y)\cdots)\bigr),
\qquad
L_y=\big(\cdots((y_{j_{l+1}}y_{j_{l+2}})y_{j_{l+3}})\cdots)y_{j_m}.
\]
Set
\[
uv=
z_{p_1}\bigl(z_{p_2}(\cdots(z_{p_{k+l+1}}L_z)\cdots)\bigr),
\qquad
L_z=((\cdots((z_{p_{k+l+2}}z_{p_{k+l+3}})z_{p_{k+l+4}})\cdots)z_{p_{n+m-1}}\big)y_{j_m},
\]
where the indices are determined by the following conditions:
\[
\{x_{i_1},\ldots,x_{i_{k+1}},\,y_{j_1},\ldots,y_{j_{l+1}}\}
=
\{z_{p_1},\ldots,z_{p_{k+l+2}}\},
\]
\[
\{x_{i_{k+2}},\ldots,x_{i_n},\,y_{j_{l+2}},\ldots,y_{j_{m-1}}\}
=
\{z_{p_{k+l+3}},\ldots,z_{p_{n+m-1}}\},
\]
and the variables are ordered so that
\[
z_{p_1}\le \cdots \le z_{p_{k+l+2}},
\qquad
z_{p_{k+l+3}}\ge \cdots \ge z_{p_{n+m-1}}.
\]
Equivalently, the first block is written in nondecreasing order, while the second block is written in nonincreasing order.

The only remaining case is when, in the tail $L_z$, the element $y_{j_m}$ occurs inside the nonincreasing block, namely,
\[
z_{p_{k+l+3}}\ge \cdots \ge z_{p_r}\ge y_{j_m} \ge z_{p_{r+1}}\ge \cdots \ge z_{p_{n+m-1}}.
\]
Without loss of generality, we assume that $y_{j_m} \ge z_{p_{k+l+3}}\ge \cdots \ge z_{p_{n+m-1}}.$
As a result, $L_z$ can be written as the following alternating sum:
\begin{multline*}
L_z=\big((\cdots((z_{p_{k+l+2}}z_{p_{k+l+3}})z_{p_{k+l+4}})\cdots)z_{p_{n+m-1}}\big)y_{j_m}\\
=\big(\cdots(((z_{p_{k+l+2}}y_{j_m})z_{p_{k+l+3}})z_{p_{k+l+4}})\cdots\big)z_{p_{n+m-1}}\\
\quad +\,z_{p_{k+l+2}}\bigl((\cdots((y_{j_m}z_{p_{k+l+3}})z_{p_{k+l+4}})\cdots)z_{p_{n+m-1}}\bigr)
-\,z_{p_{k+l+2}}\bigl((\cdots((z_{p_{n+m-1}}y_{j_m})z_{p_{k+l+3}})\cdots)z_{p_{n+m-2}}\bigr)\\
\quad -\,z_{p_{k+l+2}}\Bigl(z_{p_{n+m-1}}\bigl((\cdots((y_{j_m}z_{p_{k+l+3}})z_{p_{k+l+4}})\cdots)z_{p_{n+m-2}}\bigr)\Bigr)\\
\quad +\,z_{p_{k+l+2}}\Bigl(z_{p_{n+m-1}}\bigl((\cdots((z_{p_{n+m-2}}y_{j_m})z_{p_{k+l+3}})\cdots)z_{p_{n+m-3}}\bigr)\Bigr)
\;+\;\cdots\\
\quad +\,(-1)^n z_{p_{k+l+2}}\Bigl(z_{p_{n+m-1}}\bigl(z_{p_{n+m-2}}(\cdots(y_{j_m}z_{p_{k+l+3}})\cdots)\bigr)\Bigr)\\
\quad -\,(-1)^n z_{p_{k+l+2}}\Bigl(z_{p_{n+m-1}}\bigl(z_{p_{n+m-2}}(\cdots(z_{p_{k+l+3}}y_{j_m})\cdots)\bigr)\Bigr).
\end{multline*}

We illustrate this expansion with several low-degree examples:
\[
\big((ac)b\big)d=
\big((ad)c\big)b+a\bigl((dc)b\bigr)-a\bigl((bd)c\bigr)-a\bigl(b(dc)\bigr)+a\bigl(b(cd)\bigr),
\]
\begin{multline*}
\big(((ad)c)b\big)e
=\big(((ae)d)c\big)b+a\bigl(((ed)c)b\bigr)-a\bigl(((be)d)c\bigr)-a\bigl(b((ed)c)\bigr)+a\bigl(b((ce)d)\bigr)\\
\qquad\qquad
+a\bigl(b(c(ed))\bigr)-a\bigl(b(c(de))\bigr),
\end{multline*}
and
\begin{multline*}
((((ae)d)c)b)f
=((((af)e)d)c)b+a\bigl((((fe)d)c)b\bigr)-a\bigl((((bf)e)d)c\bigr)-a\bigl(b(((fe)d)c)\bigr)\\
+a\bigl(b(((cf)e)d)\bigr)
+a\bigl(b(c((fe)d))\bigr)-a\bigl(b(c((df)e))\bigr)
-a\bigl(b(c(d(fe)))\bigr)+a\bigl(b(c(d(ef)))\bigr).
\end{multline*}

A direct computation shows that the algebra $\mathcal{A}\<X\>$ belongs to $\mathfrak{Ma}\text{-}$${\rm Nov}$, that is, it satisfies the defining identity of $\mathfrak{Ma}\text{-}$${\rm Nov}$. 
It remains to note that
\[
\mathcal{A}\<X\>\cong \mathfrak{Ma}\text{-}{\rm Nov}\<X\>.
\]

\end{proof}

For the operad $\mathfrak{Ma}\text{-}$${\rm Nov}$, we have
\[
\dim\bigl(\mathfrak{Ma}\text{-}{\rm Nov}(n)\bigr)=2^{n}-2.
\]

For the pair $\Var\langle X\rangle$ and its dialgebra analogue $\di\text{-}\Var\langle X\rangle$, there is a well-known relationship between their dimensions and bases established in \cite{Kol2017}. Namely, if
\[
\dim(\Var(n))=k_n,
\]
then
\[
\dim(\di\text{-}\Var(n))=n\cdot k_n.
\]
Moreover, a basis of $\Var\langle X\rangle$ yields, in an explicit and canonical way, a basis of $\di\text{-}\Var\langle X\rangle$.

In contrast, as illustrated by the algebras $\mathfrak{Ma}\text{-}$$\As\langle X\rangle$, $\mathfrak{Ma}\text{-}$$\Leib\langle X\rangle$, and $\mathfrak{Ma}\text{-}$$\Nov\langle X\rangle$, no analogous correspondence appears to hold between $\Var\langle X\rangle$ and its $\mathfrak{Ma}\text{-}$version $\Var\langle X\rangle$. In general, neither the dimensions nor the monomial bases of these two free objects are directly related.

\subsection{Koszul dual of $\mathfrak{Ma}\text{-}\Var$ and the Dong property}

Let us compute the dual operad for $\mathfrak{Ma}\text{-}$$\Var$. The Lie-admissibility condition for $S\otimes U$ gives the defining identities of the operad $\mathfrak{Ma}\text{-}$$\Var^{(!)}$, where $S$ is an algebra from $\mathfrak{Ma}\text{-}$$\Var$. Firstly, let us consider the case $\Var=\Leib$. So, by Theorem \ref{BasisALeib}, we have
\begin{multline*}
\big[[a\otimes u,b\otimes v],c\otimes w\big]=(ab)c\otimes (uv)w-(ba)c\otimes (vu)w-
c(ab)\otimes w(uv)+c(ba)\otimes w(vu)=\\
(ab)c\otimes (uv)w-(ba)c\otimes (vu)w-
c(ab)\otimes w(uv)-c(ab)\otimes w(vu),
\end{multline*}
\begin{multline*}
\big[[b\otimes v,c\otimes w],a\otimes u\big]=(bc)a\otimes (vw)u-(cb)a\otimes (wv)u-
a(bc)\otimes u(vw)+a(cb)\otimes u(wv)=\\
((ba)c+b(ac))\otimes (vw)u-((ca)b+c(ab))\otimes (wv)u-
a(bc)\otimes u(vw)-a(bc)\otimes u(wv)
\end{multline*}
and
\begin{multline*}
\big[[c\otimes w,a\otimes u],b\otimes v\big]=(ca)b\otimes (wu)v-(ac)b\otimes (uw)v-
b(ca)\otimes v(wu)+b(ac)\otimes v(uw)=\\
(ca)b\otimes (wu)v-((ab)c+a(bc))\otimes (uw)v+
b(ac)\otimes v(wu)+b(ac)\otimes v(uw).
\end{multline*}
The sum of the same elements on the left side of the tensors, we obtain the following result: 
\begin{proposition}
The operad $\mathfrak{Ma}\text{-}$$\Leib^{(!)}$ is an operad which satisfies the following identities:
\[
(uv)w=(uw)v\;\;\text{and}\;\; (vw)u+v(wu)+v(uw)=0.
\]
It is not difficult to see that $\mathfrak{Ma}\text{-}\Leib^{(!)}=\mathfrak{Ma}\text{-}\Zinb$.
\end{proposition}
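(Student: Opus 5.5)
The plan is to use the description of the Koszul dual recalled just above: $\mathfrak{Ma}\text{-}\Leib^{(!)}$ is the quadratic operad whose degree $3$ relations are exactly the conditions on $U$ that make $S\otimes U$ Lie under the commutator for every $S\in\mathfrak{Ma}\text{-}\Leib$.

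\emph{Step 1 (normal form in $S$).} Fix the basis $\mathcal{AL}_3$ of $\mathfrak{Ma}\text{-}\Leib(3)$ from Theorem~\ref{BasisALeib}. The defining identity gives $(ab)c=(ac)b-a(bc)$, and exchanging $b,c$ gives $(ac)b=(ab)c-a(cb)$. Adding these yields \eqref{ALeib1} in degree $3$, namely $a(bc)=-a(cb)$. These two rules let me rewrite every monomial in $a,b,c$ as a combination of the six basis monomials
\[
a(bc),\quad b(ac),\quad c(ab),\quad (ab)c,\quad (ba)c,\quad (ca)b.
\]
This is exactly what the three expansions displayed before the proposition do.

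\emph{Step 2 (extract the relations).} I add the three Jacobiator terms $\big[[a\otimes u,b\otimes v],c\otimes w\big]$ and its two cyclic shifts, with left factors already in normal form. I then group by the six basis monomials. Since these are linearly independent in the free $\mathfrak{Ma}$-Leibniz algebra, the Jacobi identity in $S\otimes U$ holds for all $S$ if and only if each of the six coefficients, which are quadratic expressions in $u,v,w$, vanishes in $U$. This gives six relations. For example, the coefficient of $(ca)b$ is $(wu)v-(wv)u$, and the coefficient of $(ba)c$ is $-(vu)w+(vw)u$; both are instances of right commutativity $(uv)w=(uw)v$. The coefficient of $a(bc)$ collects $u(vw)$, $u(wv)$ and a left-nested term coming from the rewriting, and it should be an $\mathbb S_3$-instance of $(vw)u+v(wu)+v(uw)=0$. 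I will check the remaining coefficients in the same way and confirm that the six relations span the same $\mathbb S_3$-submodule as the two stated identities.

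\emph{Step 3 (dimension check and identification).} The space of quadratic relations of the dual should have dimension $12-\dim\mathfrak{Ma}\text{-}\Leib(3)=12-6=6$. The six extracted relations should therefore be independent, and this serves as a consistency check. Finally, rewrite $(vw)u+v(wu)+v(uw)=0$ as $(vw)u=-v(wu+uw)$, which is precisely the defining identity of $\mathfrak{Ma}\text{-}\Zinb$. Right commutativity is then redundant by Proposition~\ref{arz}, so the two operads have the same relations and $\mathfrak{Ma}\text{-}\Leib^{(!)}=\mathfrak{Ma}\text{-}\Zinb$.

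The main obstacle is the bookkeeping in Step 2. Signs from the normal-form rewriting, in particular from $a(cb)=-a(bc)$ and from expanding $(bc)a$ and $(ac)b$, must be tracked consistently with the variable correspondence $a\leftrightarrow u$, $b\leftrightarrow v$, $c\leftrightarrow w$. One must also check that no seventh independent relation appears. I would first verify the two coefficients that should give right commutativity, since those are the simplest, and then use the dimension count $6$ to control the rest.
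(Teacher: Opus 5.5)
Your proposal is correct and follows the paper's route. Like the paper, you expand the Jacobiator of the commutator on $S\otimes U$, reduce the left factors to the basis $\mathcal{AL}_3$ using $(xy)z=(xz)y-x(yz)$ and $a(bc)=-a(cb)$, and read off the six coefficients: three instances of right commutativity and three instances of $(xy)z+x(yz)+x(zy)=0$. Your explicit use of Proposition~\ref{arz} fills in the paper's ``not difficult to see''.

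One small slip in your consistency check: the relation space of the dual has dimension $12-\dim R=\dim\mathfrak{Ma}\text{-}\Leib(3)=6$, not $12-\dim\mathfrak{Ma}\text{-}\Leib(3)$. The two numbers agree here only because $\dim R=6$ as well.
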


If $\Var=\text{anti-}\Com\text{-}\mathfrak{Ma}\text{-}\As$, then 
\begin{multline*}
\big[[a\otimes u,b\otimes v],c\otimes w\big]=(ab)c\otimes (uv)w-(ba)c\otimes (vu)w-
c(ab)\otimes w(uv)+c(ba)\otimes w(vu)=\\
-a(bc)\otimes (uv)w-a(bc)\otimes (vu)w-
a(bc)\otimes w(uv)-a(bc)\otimes w(vu),
\end{multline*}
\begin{multline*}
\big[[b\otimes v,c\otimes w],a\otimes u\big]=(bc)a\otimes (vw)u-(cb)a\otimes (wv)u-
a(bc)\otimes u(vw)+a(cb)\otimes u(wv)=\\
-a(bc)\otimes (vw)u-a(bc)\otimes (wv)u-
a(bc)\otimes u(vw)-a(bc)\otimes u(wv)
\end{multline*}
and
\begin{multline*}
\big[[c\otimes w,a\otimes u],b\otimes v\big]=(ca)b\otimes (wu)v-(ac)b\otimes (uw)v-
b(ca)\otimes v(wu)+b(ac)\otimes v(uw)=\\
-a(bc)\otimes (wu)v-a(bc)\otimes (uw)v-
a(bc)\otimes v(wu)-a(bc)\otimes v(uw).
\end{multline*}
The sum of the same elements on the left side of the tensors, we obtain the expected following result:
\begin{proposition}[see, \cite{Z17}]
The operad $\text{anti-}\Com\text{-}\mathfrak{Ma}\text{-}\As^{(!)}$ is Jacobi-Jordan.
\end{proposition}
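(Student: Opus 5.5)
The plan is to follow the same Manin/Lie-admissibility scheme used just above for $\mathfrak{Ma}\text{-}\Leib$. For an algebra $S$ in $\text{anti-}\Com\text{-}\mathfrak{Ma}\text{-}\As$ and an algebra $U$ of the dual operad, we require $S\otimes U$ with the commutator bracket to satisfy the Jacobi identity. We then read off the relations on $U$ from the coefficients of a basis of the degree-$3$ multilinear component of $S$.

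\textbf{Step 1: the binary part.} The generating operation of $\text{anti-}\Com\text{-}\mathfrak{Ma}\text{-}\As$ is anticommutative. Its quadratic dual is therefore generated by a commutative operation: the sign twist in the pairing turns the sign representation of $\mathbb S_2$ into the trivial one. So from the outset we may assume $uv=vu$ in $U$. Equivalently, the terms $w(uv)$ and $(uv)w$ in the expansions above may be identified.

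\textbf{Step 2: the degree-$3$ relations.} We use the three displayed expansions of $\big[[a\otimes u,b\otimes v],c\otimes w\big]$ and its cyclic shifts. In each of them, anticommutativity together with $(xy)z=-x(yz)$ has been used to rewrite every left tensor factor as $-a(bc)$. We take $a(bc)$ as the spanning element of the degree-$3$ multilinear component, following the normal form used in Proposition~\ref{basisAAs}. Summing the three expansions and collecting the coefficient of $a(bc)$ gives the single relation
\[
(uv)w+(vu)w+w(uv)+w(vu)+(vw)u+(wv)u+u(vw)+u(wv)+(wu)v+(uw)v+v(wu)+v(uw)=0 .
\]

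\textbf{Step 3: simplification.} Using the commutativity from Step~1, each group of four terms collapses. For instance $(uv)w+(vu)w+w(uv)+w(vu)=4(uv)w$. The relation therefore becomes
\[
4\big((uv)w+(vw)u+(wu)v\big)=0 .
\]
In characteristic $0$ (indeed whenever $\operatorname{char}\Bbbk\neq 2$) this is exactly the Jacobi identity for a commutative product, so $U$ is a Jacobi--Jordan algebra. Finally we check the dimension count: the space of relations has the correct dimension, namely the complement of the one-dimensional degree-$3$ component of the anti-associative side. This confirms that no further relations arise, and that the dual operad is precisely $\mathrm{JJ}$, in agreement with \cite{Z17}.

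\textbf{Main obstacle.} The delicate point is Step~2: justifying that all twelve monomials really reduce to the same element $a(bc)$ with the same sign, and that this element is nonzero and spans the degree-$3$ component. The defining relations could a priori force $a(bc)=-a(bc)$, which would make the component vanish. I would first carefully verify the reductions in the three displays, tracking the signs coming from $(xy)z=-x(yz)$ and $xy=-yx$. If the component degenerates, I would instead compute the orthogonal complement of the relation space directly in the $12$-dimensional space of degree-$3$ operations, where the pairing is given by the sign $-1$ on right-nested monomials. I would then confirm that this complement is the Jacobi--Jordan relation space.
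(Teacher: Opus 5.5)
Your proposal is correct and follows the paper's own route. The same three expansions of the double commutators in $S\otimes U$, with every left factor reduced to $-a(bc)$, yield the twelve-term relation, and your Step~1 (the quadratic dual of an anticommutative generator is commutative) makes explicit the step the paper leaves tacit when it calls this relation the Jacobi--Jordan identity.

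The degeneration you worry about in your last paragraph does not occur. Consider the functional sending $(x_{\sigma(1)}x_{\sigma(2)})x_{\sigma(3)}\mapsto(-1)^{\sigma}$ and $x_{\sigma(1)}(x_{\sigma(2)}x_{\sigma(3)})\mapsto-(-1)^{\sigma}$; this is exactly the sign table in the paper's proof describing $\mathcal{M}$anin anti-algebras. It vanishes on all consequences of anticommutativity and anti-associativity, so $a(bc)\neq 0$ and it spans the one-dimensional arity-$3$ component.
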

The analogical calculations give the following result:
\begin{proposition}[see, \cite{dN}]
The operad $\mathfrak{Ma}\text{-}\Nov$ is self-dual.
\end{proposition}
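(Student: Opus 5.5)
We would prove the self-duality by the criterion for quadratic duality of binary operads: $\mathcal P^{(!)}=\mathcal P$ holds exactly when the relation space $R\subseteq \mathcal T(3)$ coincides with its orthogonal complement $R^{\perp}$. Here $\mathcal T(3)$ is the $12$-dimensional space of multilinear degree $3$ monomials. The pairing is the standard sign-twisted one,
\[
\big\langle (x_{\sigma(1)}x_{\sigma(2)})x_{\sigma(3)},(x_{\tau(1)}x_{\tau(2)})x_{\tau(3)}\big\rangle=\delta_{\sigma\tau}\,\mathrm{sgn}(\sigma),
\qquad
\big\langle x_{\sigma(1)}(x_{\sigma(2)}x_{\sigma(3)}),x_{\tau(1)}(x_{\tau(2)}x_{\tau(3)})\big\rangle=-\delta_{\sigma\tau}\,\mathrm{sgn}(\sigma),
\]
with left-nested and right-nested monomials orthogonal to each other. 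This is the same computation that the Lie-admissibility of $S\otimes U$ encodes, which is how the paper treated $\mathfrak{Ma}\text{-}\Leib^{(!)}$. Since $\dim R^{\perp}=12-\dim R$, it suffices to prove two things: that $\dim R=6$, and that $\langle R,R\rangle=0$. Together these give $R\subseteq R^{\perp}$ with equal dimensions, hence $R=R^{\perp}$.

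\textbf{Step 1: $\dim R=6$.} This is equivalent to $\dim \mathfrak{Ma}\text{-}\Nov(3)=6$. That value follows from Theorem~\ref{ThmANovBasis} via the formula $2^{n}-2$ at $n=3$. It can also be checked directly from the $n=3$ rewriting in its proof: the three right-nested monomials $x_1(x_2x_3)$, $x_1(x_3x_2)$, $x_2(x_3x_1)$ and the three left-nested monomials $(x_1x_3)x_2$, $(x_2x_3)x_1$, $(x_3x_2)x_1$ survive.

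\textbf{Step 2: $R$ is isotropic.} The space $R$ is spanned by the $\mathbb S_3$-orbits of two elements. The first is
\[
f=(xy)z+x(yz)-(xz)y-x(zy),
\]
which has $3$ independent translates because it is antisymmetric in $y,z$. The second is $g=x(yz)-y(xz)$, again with $3$ independent translates. We pair every translate with every other translate, including itself. The pairing is $\mathbb S_3$-invariant up to the sign character, so it is enough to pair $f$ and $g$ against all translates $f^\tau$ and $g^\tau$.

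\emph{The pairs $\langle g,g^\tau\rangle$.} These involve only right-nested monomials. Each $g^\tau$ is a difference of two monomials differing by the transposition of their first two letters, so the terms cancel in pairs.

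\emph{The pairs $\langle f,f^\tau\rangle$.} The left-nested and right-nested parts of $f$ occupy the same index positions and enter with the same relative sign. The pairing gives them opposite weights $\pm1$, so the left-nested contribution cancels the right-nested one. This is exactly where the $\mathfrak{Ma}$ sign change interacts correctly with the minus sign in the pairing.

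\emph{The pairs $\langle f,g^\tau\rangle$.} Only the right-nested part $x(yz)-x(zy)$ of $f$ is involved. It is antisymmetric in the last two letters, whereas $g^\tau$ is built from a difference over the first two letters. A short check of the three translates $\tau$ shows these contributions cancel.

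The main obstacle is bookkeeping rather than ideas: the sign conventions in the pairing (the sign character and the $-1$ on right-nested monomials) have to be fixed consistently with the Lie-admissibility computation used earlier for $\mathfrak{Ma}\text{-}\Leib$. With the opposite convention one would obtain the opposite operad, or $\Nov^{(!)}$, instead of $\mathfrak{Ma}\text{-}\Nov$.

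If the orthogonality check fails in some term, the fallback is to run the paper's tensor computation literally. We would expand $[[a\otimes u,b\otimes v],c\otimes w]$ and its cyclic translates, rewrite the left tensor factors in the degree-$3$ basis $\mathcal{AN}$ of $\mathfrak{Ma}\text{-}\Nov\<X\>$ using the relations from the $n=3$ case of the proof of Theorem~\ref{ThmANovBasis}, and collect the coefficients of the six basis monomials. The resulting six identities in $u,v,w$ should span precisely the $\mathbb S_3$-orbits of $\{u,v,w\}=\{u,w,v\}$ and $u(vw)=v(uw)$.
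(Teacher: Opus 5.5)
\textbf{Step~2 fails.} With the pairing you fixed (the one encoded by Lie-admissibility of $S\otimes U$, which gives $\mathfrak{Ma}\text{-}\Leib^{(!)}=\mathfrak{Ma}\text{-}\Zinb$), the relation space $R$ is \emph{not} isotropic. Your $f=(xy)z+x(yz)-(xz)y-x(zy)$ and $g=x(yz)-y(xz)$ share exactly one monomial, so
\[
\langle f,g\rangle=\langle x(yz),x(yz)\rangle=-1\neq 0 .
\]
Likewise $\langle f,\,x(zy)-z(xy)\rangle=-1$, so there is nothing to cancel in the $\langle f,g^\tau\rangle$ check. Your reason for $\langle f,f^\tau\rangle=0$ is also not the real one. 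The left-nested and right-nested contributions vanish separately, because in each part the two monomials differ by a transposition. The same holds for the undeformed Novikov relation, so the $\mathfrak{Ma}$ sign plays no role there. Hence $R\neq R^{\perp}$, and the argument ``$R\subseteq R^\perp$ plus equal dimensions'' cannot be run. Solving the six linear conditions shows that $R^\perp$ is spanned by the $\mathbb S_3$-orbits of $(xy)z-(xz)y$ and $\{x,y,z\}-\{y,x,z\}$. For the same reason, your predicted outcome of the fallback tensor computation is wrong. It produces right-commutativity and left symmetry of $\{\cdot,\cdot,\cdot\}$, not the defining identities of $\mathfrak{Ma}\text{-}\Nov$.

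\textbf{The missing idea is the opposite-multiplication isomorphism.} Put $x*y=yx$. Then $x(yz)=y(xz)$ becomes $(z*y)*x=(z*x)*y$, and $\{x,y,z\}=\{x,z,y\}$ becomes $\{z,y,x\}_*=\{y,z,x\}_*$. So $R^\perp$ is exactly the image of $R$ under the automorphism of the free operad induced by $xy\mapsto yx$. This automorphism sends $(x_ix_j)x_k$ to $x_k(x_jx_i)$, and it is an isometry of the pairing, since reversing the word flips the sign of the permutation and compensates the $-1$ on right-nested monomials. Thus $(\mathfrak{Ma}\text{-}\Nov)^{(!)}=(\mathfrak{Ma}\text{-}\Nov)^{\mathrm{op}}\cong\mathfrak{Ma}\text{-}\Nov$. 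The operad is self-dual only up to this isomorphism, not in the strict sense $R=R^\perp$. You can also reach this from the paper's theorem $(\mathfrak{Ma}\text{-}\Var)^{(!)}=\mathfrak{Ma}\text{-}(\Var^{(!)})$, using $\Nov^{(!)}=\Nov^{\mathrm{op}}$ and the fact that $\mathfrak{Ma}$ commutes with passing to the opposite operad, up to the sign of each relation. The paper itself only invokes the same tensor computation as for $\mathfrak{Ma}\text{-}\Leib$ and cites \cite{dN}.
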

More general result:

\begin{theorem}
Let $\Var$ be a quadratic operad over a field $\Bbbk$ with
$\operatorname{char}\Bbbk\neq 2$. Then
\[
(\mathfrak{Ma}\text{-}\Var)^{(!)}\ =\ 
\mathfrak{Ma}\text{-}(\Var^{(!)}).
\]
\end{theorem}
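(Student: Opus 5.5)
The plan is to work directly with the Ginzburg--Kapranov description of quadratic duality for binary operads and to observe that $\mathfrak{Ma}$ is an isometry of the relevant pairing.

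Let $\Var=\mathcal P(E,R)$, where $E=\Bbbk\mathbb S_2$ is the space of binary operations (no symmetry imposed) and $R\subseteq\mathcal F(E)(3)$ is the $\mathbb S_3$-submodule of defining degree $3$ relations. The space $\mathcal F(E)(3)$ is $12$-dimensional and splits as
\[
\mathcal F(E)(3)=L\oplus N,
\]
where $L$ is spanned by the left-normed monomials $(x_{\sigma(1)}x_{\sigma(2)})x_{\sigma(3)}$ and $N$ by the right-normed monomials $x_{\sigma(1)}(x_{\sigma(2)}x_{\sigma(3)})$. On the level of relation spaces, the construction $\mathfrak{Ma}$ is the linear map
\[
\theta=\mathrm{id}_L\oplus(-\mathrm{id}_N),
\]
so that $\mathfrak{Ma}\text{-}\Var=\mathcal P(E,\theta R)$.

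\textbf{Step 1: $\theta$ is $\mathbb S_3$-equivariant.} A permutation of the variables sends a left-normed monomial to a left-normed one and a right-normed monomial to a right-normed one. Hence $L$ and $N$ are $\mathbb S_3$-submodules, and $\theta$ commutes with the action. Consequently $\theta R$ is again an $\mathbb S_3$-submodule and $\mathfrak{Ma}\text{-}\Var$ is a well-defined quadratic operad. One should also check that $\mathfrak{Ma}$ does not depend on the chosen generating family $\Omega$; this follows because $\theta$ is linear and bijective, so $\mathfrak{Ma}(\Omega)$ spans exactly $\theta\langle\Omega\rangle_{\mathbb S_3}$.

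\textbf{Step 2: fix the duality pairing.} I would take the standard pairing
\[
\langle\cdot,\cdot\rangle:\mathcal F(E)(3)\otimes\mathcal F(E^\vee)(3)\to\Bbbk,
\]
with $E^\vee=E^*\otimes\mathrm{sgn}$. In the monomial basis it is diagonal: on left-normed monomials the value is $\operatorname{sgn}\sigma$, and on right-normed monomials it is $-\operatorname{sgn}\sigma$. In particular $L\perp N$. The dual operad is then $\Var^{(!)}=\mathcal P(E^\vee,R^\perp)$. This is exactly what the paper's Lie-admissibility test on $S\otimes U$ computes, and I would check the sign conventions against the cases already done above: $\mathfrak{Ma}\text{-}\Leib^{(!)}=\mathfrak{Ma}\text{-}\Zinb$, and the self-duality of $\mathfrak{Ma}\text{-}\Nov$.

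\textbf{Step 3: $\theta$ preserves the pairing.} Because the pairing is block-diagonal with respect to $L\oplus N$ and $\theta$ acts by the scalar $\pm1$ on each block (the same scalar on both sides),
\[
\langle\theta f,\theta g\rangle=\langle f,g\rangle .
\]
Since $\theta$ is an involution, this gives
\[
g\in(\theta R)^\perp\iff\langle\theta r,g\rangle=0\ \ \forall r\in R\iff\langle r,\theta g\rangle=0\ \ \forall r\in R\iff\theta g\in R^\perp .
\]
Hence $(\theta R)^\perp=\theta(R^\perp)$, which is precisely the equality $(\mathfrak{Ma}\text{-}\Var)^{(!)}=\mathfrak{Ma}\text{-}(\Var^{(!)})$.

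\textbf{Main obstacle: conventions.} The substance of the argument is Step 3, but it depends on getting Step 2 right. I need to confirm that, with the identification $E^\vee\cong E$ used in the paper, the pairing really is diagonal in the left/right monomial basis. I also need to confirm that the transposition in $\mathbb S_2$ maps $L$ to $L$ and $N$ to $N$, and does not mix the two blocks through the sign twist. Here $\operatorname{char}\Bbbk\neq2$ enters: it makes $\theta\neq\mathrm{id}$ and makes the sign-twisted pairing nondegenerate with the expected signs, so the orthogonal complement behaves as claimed. As a sanity check, I would recompute one example from the explicit $S\otimes U$ expansions above, for instance the $\Leib$ case, and confirm that it agrees with $\theta(R^\perp)$.
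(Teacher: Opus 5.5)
Your proposal is correct and is essentially the paper's proof. The paper likewise splits $\mathcal F(E)(3)$ into left- and right-nested parts and uses the vanishing of the mixed pairings to get $\langle\mathfrak{Ma}(f),g\rangle=\langle f,\mathfrak{Ma}(g)\rangle$; this is equivalent to your isometry identity because $\mathfrak{Ma}$ is an involution, and it concludes $(\mathfrak{Ma}(R))^{\perp}=\mathfrak{Ma}(R^{\perp})$ in the same way.

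One small correction to your closing remark: the diagonal $\pm1$ pairing is nondegenerate in every characteristic, and Step 3 never uses $\operatorname{char}\Bbbk\neq 2$. In characteristic $2$ one simply has $\mathfrak{Ma}=\mathrm{id}$ and the statement is trivial.
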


\begin{proof}
Let $\Var=\mathcal F(E)/(R)$ be a quadratic presentation, where $\mathcal F(E)$ is the free symmetric operad
generated by a binary $\mathbb S$-module $E$ and $R\subset \mathcal F(E)(3)$ is the $\mathbb S_3$-submodule of
quadratic relations. We use the definition of the quadratic dual operad via orthogonal complement:
\[
\Var^{(!)}=\mathcal F(E^{\vee})/(R^{\perp}),
\qquad
R^{\perp}:=\{\,g\in \mathcal F(E^{\vee})(3)\mid \langle r,g\rangle=0\ \text{for all } r\in R\,\},
\]
where $\langle\cdot,\cdot\rangle$ is the standard nondegenerate $\mathbb S_3$-invariant pairing
between $\mathcal F(E)(3)$ and $\mathcal F(E^{\vee})(3)$.

In arity $3$, we have the direct sum decomposition
\[
\mathcal F(E)(3)=\mathcal F_L(E)(3)\oplus \mathcal F_R(E)(3),
\]
where $\mathcal F_L(E)(3)$ is spanned by the monomials
$\bigl(x_{\sigma(1)}x_{\sigma(2)}\bigr)x_{\sigma(3)}$ and
$\mathcal F_R(E)(3)$ is spanned by the monomials
$x_{\sigma(1)}\bigl(x_{\sigma(2)}x_{\sigma(3)}\bigr)$, for $\sigma\in\mathbb S_3$.
Hence every $f\in \mathcal F(E)(3)$ is uniquely written as $f=f_L+f_R$ with
$f_L\in \mathcal F_L(E)(3)$ and $f_R\in \mathcal F_R(E)(3)$.

By definition,
\[
\mathfrak{Ma}(f):=f_L-f_R.
\]
In particular, $\mathfrak{Ma}$ is $\Bbbk$-linear, $\mathbb S_3$-equivariant, and
\begin{equation}\label{eq:MaProperties}
\mathfrak{Ma}^2=\mathrm{id},\qquad
\mathfrak{Ma}\big|_{\mathcal F_L(E)(3)}=\mathrm{id},\qquad
\mathfrak{Ma}\big|_{\mathcal F_R(E)(3)}=-\,\mathrm{id}.
\end{equation}
Moreover, since $\operatorname{char}\Bbbk\neq 2$, we have the projections
\begin{equation*}
f_L=\frac12\bigl(f+\mathfrak{Ma}(f)\bigr),
\qquad
f_R=\frac12\bigl(f-\mathfrak{Ma}(f)\bigr).
\end{equation*}

We define $\mathfrak{Ma}$ on $\mathcal F(E^{\vee})(3)$ by the same rule:
it acts by $+1$ on the left-nested component and by $-1$ on the right-nested component.
Then for all $f\in\mathcal F(E)(3)$ and $g\in\mathcal F(E^{\vee})(3)$, we have
\begin{equation}\label{eq:MaAdjoint}
\langle \mathfrak{Ma}(f),\,g\rangle=\langle f,\,\mathfrak{Ma}(g)\rangle.
\end{equation}
Indeed, writing $f=f_L+f_R$ and $g=g_L+g_R$, we obtain
\[
\langle \mathfrak{Ma}(f),g\rangle
=\langle f_L-f_R,\,g_L+g_R\rangle
=\langle f_L,g_L\rangle-\langle f_R,g_R\rangle,
\]
because the mixed pairings $\langle f_L,g_R\rangle$ and $\langle f_R,g_L\rangle$ vanish.
On the other hand,
\[
\langle f,\mathfrak{Ma}(g)\rangle
=\langle f_L+f_R,\,g_L-g_R\rangle
=\langle f_L,g_L\rangle-\langle f_R,g_R\rangle,
\]
which proves \eqref{eq:MaAdjoint}.

We claim that
\begin{equation}\label{eq:OrthCommutesMa}
\bigl(\mathfrak{Ma}(R)\bigr)^{\perp}=\mathfrak{Ma}\bigl(R^{\perp}\bigr).
\end{equation}
Let $g\in \big(\mathfrak{Ma}(R)\big)^{\perp}$. Then for every $r\in R$ we have
\[
\langle \mathfrak{Ma}(r),\,g\rangle=0.
\]
Using \eqref{eq:MaAdjoint}, this is equivalent to $\langle r,\,\mathfrak{Ma}(g)\rangle=0$ for all $r\in R$,
hence $\mathfrak{Ma}(g)\in R^{\perp}$. Applying \eqref{eq:MaProperties}, we get
$g=\mathfrak{Ma}\big(\mathfrak{Ma}(g)\big)\in \mathfrak{Ma}(R^{\perp})$.
Thus $\big(\mathfrak{Ma}(R)\big)^{\perp}\subseteq \mathfrak{Ma}(R^{\perp})$.

Conversely, let $g\in \mathfrak{Ma}(R^{\perp})$. Then $g=\mathfrak{Ma}(h)$ for some $h\in R^{\perp}$.
For any $r\in R$ we obtain, using \eqref{eq:MaAdjoint} and \eqref{eq:MaProperties},
\[
\langle \mathfrak{Ma}(r),\,g\rangle
=\langle \mathfrak{Ma}(r),\,\mathfrak{Ma}(h)\rangle
=\langle r,\,h\rangle
=0.
\]
Hence $g\in \big(\mathfrak{Ma}(R)\big)^{\perp}$, and therefore
$\mathfrak{Ma}(R^{\perp})\subseteq \big(\mathfrak{Ma}(R)\big)^{\perp}$.
This proves \eqref{eq:OrthCommutesMa}.
\end{proof}

Let us clarify the obtained result. By definition,
\[
\mathfrak{Ma}\text{-}\Var=\mathcal F(E)/\big(\mathfrak{Ma}(R)\big).
\]
Taking the quadratic dual, we obtain
\[
(\mathfrak{Ma}\text{-}\Var)^{(!)}
=\mathcal F(E^{\vee})/\bigl(\mathfrak{Ma}(R)\bigr)^{\perp}.
\]
Using \eqref{eq:OrthCommutesMa}, this is
\[
(\mathfrak{Ma}\text{-}\Var)^{(!)}
=\mathcal F(E^{\vee})/\mathfrak{Ma}(R^{\perp}).
\]
On the other hand, since $\Var^{(!)}=\mathcal F(E^{\vee})/(R^{\perp})$, applying the construction $\mathfrak{Ma}$
means replacing $R^{\perp}$ by $\mathfrak{Ma}(R^{\perp})$, hence
$ 
\mathfrak{Ma}\text{-}(\Var^{(!)})=\mathcal F(E^{\vee})/\mathfrak{Ma}(R^{\perp}).$
Therefore,
\[
(\mathfrak{Ma}\text{-}\Var)^{(!)}=\mathfrak{Ma}\text{-}(\Var^{(!)}),
\]
as required.

In \cite{Dong}, the criterion is given for a quadratic operad to satisfy the Dong property. For $\mathfrak{Ma}\text{-}\Var$, we have 
\begin{theorem}
The operad $\mathfrak{Ma}\text{-}\Var$ satisfies the Dong property if and only if $\Var$ satisfies the Dong property.
\end{theorem}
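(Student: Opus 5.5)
The plan is to work entirely with the weight criterion of \cite{Dong}, which decides the Dong property for a binary quadratic operad $\Var=\mathcal F(E)/(R)$ from its space of arity-$3$ relations $R\subset\mathcal F(E)(3)$. I would avoid comparing free derived algebras directly. I would then use the splitting
\[
\mathcal F(E)(3)=\mathcal F_L(E)(3)\oplus\mathcal F_R(E)(3)
\]
and the linear involution $\mathfrak{Ma}=\mathrm{id}\oplus(-\mathrm{id})$, exactly as in the proof of the previous theorem. Since $\mathfrak{Ma}^2=\mathrm{id}$ and $\mathfrak{Ma}$-$(\mathfrak{Ma}$-$\Var)=\Var$, it suffices to prove one implication: if $\Var$ has the Dong property, so does $\mathfrak{Ma}$-$\Var$. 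The converse then follows by applying this implication to $\mathfrak{Ma}$-$\Var$.

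\textbf{Step 1: rewrite the criterion in the $L/R$ splitting.} I would restate the criterion of \cite{Dong} in the following form. Mark one of the three variables of a degree-$3$ monomial as carrying the derivation $d$; the marked variable carries weight $1$ and all others weight $0$. Replacing a multiplication by $x\succ y=d(x)y$ or $x\prec y=x\,d(y)$ then assigns to each relation $r$ a family of weighted components. The Dong property becomes a system of conditions of the form ``certain weighted projections $\pi_\lambda(r)$ of $r\in R$ lie in $R$'', or equivalently ``a certain linear subspace built from $R$ via the maps $\pi_\lambda$ coincides with a prescribed one''. The key observation to establish is that each $\pi_\lambda$ only redistributes derivation markings among variables. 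It never changes the bracketing shape of a monomial, so each $\pi_\lambda$ preserves $\mathcal F_L(E)(3)$ and $\mathcal F_R(E)(3)$ separately. Moreover, the $\pi_\lambda$ are $\mathbb S_3$-equivariant in the appropriate sense.

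\textbf{Step 2: commutation.} From Step 1 it follows that $\pi_\lambda\circ\mathfrak{Ma}=\mathfrak{Ma}\circ\pi_\lambda$ for every $\lambda$, because $\mathfrak{Ma}$ acts by a scalar on each shape component. Since $\mathfrak{Ma}$ is an $\mathbb S_3$-equivariant linear automorphism, every condition of the form $\pi_\lambda(R)\subseteq R$, or ``$\dim$ of some subspace built from $R$ by the $\pi_\lambda$ equals $N$'', is transported to the same condition for $\mathfrak{Ma}(R)$. Hence the criterion holds for $R$ if and only if it holds for $\mathfrak{Ma}(R)$.

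\textbf{Step 3: the main obstacle.} I expect the hardest step to be confirming that the criterion of \cite{Dong} really involves only arity-$3$ data in this shape-preserving way. If instead it refers to the derived operad $\Var\circ\Nov$ (a white Manin product) in arity $3$, the weighted components in that product come from tensoring with Novikov monomials. In the Novikov factor the left and right shapes are again separated, so the sign flip should extend to a shape-wise involution on the relevant arity-$3$ space of the product. The argument of Step 2 then goes through verbatim.

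What I would not try is to extend $\mathfrak{Ma}$ to an automorphism of the free operad in all arities, e.g.\ by the sign $(-1)^{\#\text{internal right children}}$. That sign is not compatible with operadic composition into leaves, since substitution at a leaf multiplies different monomials by different signs. So the proof must stay at the level of the arity-$3$ criterion. If the criterion turned out to need higher-arity information, I would fall back on checking that it depends only on $R$ through the $\mathbb S_3$-module structure of the shape components, which $\mathfrak{Ma}$ preserves.
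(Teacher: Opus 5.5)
Your underlying mechanism is the one the paper uses: the criterion sees $\Var$ only through the arity-$3$ relation space $R$ and subspaces spanned by monomials, $\mathfrak{Ma}$ rescales each monomial by $\pm1$, and $\mathfrak{Ma}$ is an involution, so one implication suffices. But the proof as written has a gap exactly where you flag the main obstacle. You never state the criterion of \cite{Dong}. The form you posit, weighted projections $\pi_\lambda$ attached to derivation markings with conditions $\pi_\lambda(R)\subseteq R$, is not that criterion, so Steps~1--2 only establish invariance of a hypothetical condition.

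The criterion actually used is the ``to inside'' condition. Each monomial $(x_1\circ_j x_2)\circ_i x_3$ with $\circ_i,\circ_j\in\{\cdot,\cdot^{op}\}$ must be congruent modulo $R$ to a linear combination of the monomials $(x_3\circ_k x_2)\circ_l x_1$ and $(x_1\circ_k x_3)\circ_l x_2$. The left-hand monomials are the four in which $x_3$ is the outer argument; the others are the eight in which $x_3$ lies inside a product. Writing $W_{\rm in}$ for the span of these eight monomials, the condition says exactly $R+W_{\rm in}=\mathcal F(E)(3)$.

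With the criterion in this form the argument is one line, and it is the paper's. $\mathfrak{Ma}$ multiplies a monomial by $-1$ exactly when its outer operation is $\cdot^{op}$, i.e.\ $\circ_i=\cdot^{op}$ on the left and $\circ_l=\cdot^{op}$ on the right. So it sends every relation of the above shape in $R$ to a relation of the same shape in $\mathfrak{Ma}(R)$, with some coefficients negated. Equivalently, $\mathfrak{Ma}(W_{\rm in})=W_{\rm in}$, whence $\mathfrak{Ma}(R)+W_{\rm in}=\mathfrak{Ma}(R+W_{\rm in})=\mathcal F(E)(3)$. In your language, take $\pi$ to be the coordinate projection onto the span of the four outer monomials along $W_{\rm in}$; the condition is then that $\pi(R)$ is that whole span.

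The property doing the work is that $\mathfrak{Ma}$ is diagonal in the monomial basis, not merely that it is an $\mathbb S_3$-equivariant automorphism. In your Step~2, a condition of the form ``a subspace built from $R$ coincides with a prescribed one'' transfers to $\mathfrak{Ma}(R)$ only if the prescribed subspace is $\mathfrak{Ma}$-stable, which you do not check. Here the relevant subspace is $W_{\rm in}$, and it is stable precisely because it is spanned by monomials. The detour through $\Var\circ\Nov$ in Step~3 is unnecessary.
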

\begin{proof}
We use the criterion from \cite{Dong}. Namely, an operad $\Var$ satisfies the Dong property if and only if $\Var$ fulfills the identities
\begin{equation}\label{eq:ToInside}
(x_1\circ_j x_2)\circ_i x_3
=\sum_{k,l\in I}
\alpha^{i,j}_{k,l}\,(x_3\circ_k x_2)\circ_l x_1
+\beta^{i,j}_{k,l}\,(x_1\circ_k x_3)\circ_l x_2,
\end{equation}
where, for each $t\in\{i,j,k,l\}$, the operation $\circ_t$ is either the operation $\cdot$ of $\Var$ or its opposite operation, that is,
$\circ_t=\cdot$ or $\circ_t=\cdot^{op}$.
Thus, if $\Var$ satisfies \eqref{eq:ToInside}, then the operad $\mathfrak{Ma}\text{-}\Var$ satisfies the analogous identity, where each monomial involving
$\circ_i=\cdot^{op}$ and $\circ_l=\cdot^{op}$ appears with the opposite sign.
Nevertheless, the operad $\mathfrak{Ma}\text{-}\Var$ still satisfies \eqref{eq:ToInside}. 
\end{proof}

\begin{corollary}
Among the algebras listed in the table above, the operads $\mathfrak{Ma}\text{-}$associative, $\mathfrak{Ma}\text{-}$shift-associative, $\mathfrak{Ma}\text{-}$assosymmetric, $\mathfrak{Ma}\text{-}$Novikov, and $\mathfrak{Ma}\text{-}$alternative are precisely those that satisfy the Dong property.
\end{corollary}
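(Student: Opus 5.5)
The Corollary follows from the previous Theorem, which says $\mathfrak{Ma}$-$\Var$ has the Dong property if and only if $\Var$ does. This reduces the claim to a statement about the original varieties in the table, and those can be settled one at a time with the criterion \eqref{eq:ToInside} from \cite{Dong}. So I will go through the table row by row. For each $\Var$, I will decide whether the defining relations let us rewrite every monomial $(x_1\circ_j x_2)\circ_i x_3$ as a combination of $(x_3\circ_k x_2)\circ_l x_1$ and $(x_1\circ_k x_3)\circ_l x_2$, with $\circ\in\{\cdot,\cdot^{op}\}$. Equivalently, in the arity $3$ component $\Var(3)$, the span of monomials in which $x_1,x_3$ or $x_2,x_3$ are the inner pair must contain every monomial whose inner pair is $x_1,x_2$.

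\textbf{Positive cases.} For associative, shift associative, alternative, assosymmetric and Novikov I will exhibit the required rewritings explicitly; most of them are already known from \cite{Dong}.
\begin{itemize}
\item In the associative case, $(x_1x_2)x_3=x_1(x_2x_3)$ lets the inner pair $\{x_1,x_2\}$ be moved to $\{x_2,x_3\}$.
\item Shift associativity rotates variables cyclically, so any inner pair can be moved to another one.
\item For alternative and assosymmetric algebras, the associator is (skew-)symmetric. This lets us replace $(x_1x_2)x_3$ by $x_1(x_2x_3)\pm(x_{\sigma(1)},x_{\sigma(2)},x_{\sigma(3)})$ with the associator permuted so that the inner pair changes.
\item For Novikov, right-symmetry together with left-commutativity $x(yz)=y(xz)$ does the same.
\end{itemize}

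\textbf{Negative cases.} For Leibniz, Zinbiel, right symmetric, right alternative, flexible, antiflexible, Lie-admissible and $(-1,1)$, I will show that \eqref{eq:ToInside} fails. The approach is a dimension/projection argument on $\Var(3)$ viewed as an $\mathbb S_3$-module. Let $W\subset\Var(3)$ be the image of the span of monomials whose inner pair is $\{x_1,x_3\}$ or $\{x_2,x_3\}$. The property fails if $(x_1x_2)x_3\notin W$. Concretely, I will either use the known normal forms of the free algebras in degree $3$ (Leibniz and Zinbiel have $\dim\Var(3)=6$ with explicit bases), or, for varieties defined by a single associator-type identity, note that $\dim\Var(3)=12-\dim R$ is too large. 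If only one $\mathbb S_3$-orbit of relations is imposed, as for right symmetric, flexible, antiflexible or Lie-admissible, the $12$ monomials modulo $R$ cannot collapse the inner-pair-$\{x_1,x_2\}$ part into the other span. I will then check by linear algebra (or Albert) that the linear system for the coefficients $\alpha,\beta$ in \eqref{eq:ToInside} has no solution. A useful shortcut is to find a specific algebra in $\Var$ that violates the identity, for example a small nilpotent algebra whose only nonzero triple product is $(x_1x_2)x_3$-shaped.

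The main obstacle is the negative direction for the weaker varieties. I expect Lie-admissible and $(-1,1)$ to be the hardest, since their relation spaces are large. For these I will first try the explicit counterexample approach: a $3$-dimensional nilpotent algebra with basis $e_1,e_2,e_3$, $e_1e_2=e_4$, $e_4e_3=e_5$, checked against the defining identities. If that fails, I will solve the finite linear system over $\Var(3)$ with Albert. Combining both directions with the preceding Theorem yields exactly the five listed $\mathfrak{Ma}$-operads.
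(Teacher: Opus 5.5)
\textbf{The approach is the natural one, but the negative step fails for two varieties.} Reducing to $\Var$ via the preceding Theorem and then checking \eqref{eq:ToInside} row by row is the only route available; the paper gives no separate argument for the Corollary. Your five positive cases are correct. The gap is on the negative side: two of the varieties you plan to rule out actually satisfy \eqref{eq:ToInside} as the paper states it.

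For Leibniz, the defining identity and its substitution instances give
\[
(x_1x_2)x_3=(x_1x_3)x_2+x_1(x_2x_3),\qquad x_3(x_1x_2)=(x_3x_1)x_2-(x_3x_2)x_1,
\]
together with the versions with $x_1\leftrightarrow x_2$. Every monomial on the right has $x_3$ in the inner pair.

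For $(-1,1)$-algebras, right alternativity gives $(x_1x_2)x_3=x_1(x_2x_3)+x_1(x_3x_2)-(x_1x_3)x_2$. The cyclic identity gives $x_3(x_1x_2)=(x_1x_2)x_3-x_1(x_2x_3)+(x_2x_3)x_1-x_2(x_3x_1)+(x_3x_1)x_2$, into which you substitute the previous expression for $(x_1x_2)x_3$.

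So the linear system you intend to solve with Albert is solvable in both cases, and no counterexample exists. By the Theorem, $\mathfrak{Ma}$-Leibniz and $\mathfrak{Ma}$-$(-1,1)$ also satisfy the criterion; directly, for instance, $(x_1x_2)x_3=(x_1x_3)x_2-x_1(x_2x_3)$ in $\mathfrak{Ma}$-Leibniz. A faithful row-by-row check therefore returns seven operads, not five. The ``precisely'' part cannot be derived from \eqref{eq:ToInside} as stated. What needs correcting is the list in the statement (or the transcription of the criterion), not the amount of computation.

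\textbf{Your tools for the negative side would not have caught this.} The ``single $\mathbb S_3$-orbit'' heuristic is not a valid principle: Leibniz and Zinbiel are both single-orbit with $\dim R=6$, yet one passes and the other fails. Your test algebra with $e_1e_2=e_4$, $e_4e_3=e_5$ is $5$-dimensional, not $3$-dimensional. More importantly, it lies in none of the listed varieties; for example $[[e_1,e_2],e_3]=e_5\neq 0$, so it is not even Lie-admissible. It therefore cannot witness failure.

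A clean test is this: \eqref{eq:ToInside} holds if and only if the projection of the relation space $R\subset\mathcal F(E)(3)$ onto the $4$-dimensional span of $(x_1x_2)x_3,(x_2x_1)x_3,x_3(x_1x_2),x_3(x_2x_1)$, along the eight monomials having $x_3$ in the inner pair, is surjective. This requires $\dim R\ge 4$, which settles the following cases.
\begin{itemize}
\item Right symmetric, right alternative, flexible and antiflexible ($\dim R=3$), and Lie-admissible ($\dim R=1$), all fail.
\item Zinbiel fails: the image is spanned by $(x_1x_2)x_3$, $(x_2x_1)x_3$ and $x_3(x_1x_2)+x_3(x_2x_1)$, which is only $3$-dimensional.
\item Leibniz ($\dim R=6$) and $(-1,1)$ ($\dim R=4$) pass, because the image is all four monomials.
\end{itemize}
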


\subsection{Malcev classification and $\mathfrak{Ma}\text{-}$algebras}

As shown in \cite{Malcev_x}, the space of invariant relations of degree $3$ decomposes as
$\mathfrak{L}=\mathfrak{L}_1+\mathfrak{L}_2,$
where $\mathfrak{L}_1:=\mathfrak{A}_1+\mathfrak{B}_1+\mathfrak{C}_1+\mathfrak{D}_1$, and the subspaces are defined by the following polynomial relations:
\begin{gather}
\mathfrak{A}_1:\;(ab)c+(ba)c+(ac)b+(ca)b+(bc)a+(cb)a=0,
\label{as1} \\
\mathfrak{B}_1:\;(ab)c-(ba)c-(ac)b+(ca)b+(bc)a-(cb)a=0,
\label{as2}
\end{gather}
\begin{gather}\label{as3}
\mathfrak{C}_1:\;
\begin{cases}
(ab)c+(ba)c-(bc)a-(cb)a=0, \\
(ac)b+(ca)b-(cb)a-(bc)a=0,
\end{cases}
\end{gather}
\begin{gather}\label{as4}
\mathfrak{D}_1:\;
\begin{cases}
(ab)c+(ac)b-(cb)a-(ca)b=0, \\
(ab)c+(ac)b-(bc)a-(ba)c=0.
\end{cases}
\end{gather}
Analogously, we set $\mathfrak{L}_2:=\mathfrak{A}_2+\mathfrak{B}_2+\mathfrak{C}_2+\mathfrak{D}_2$, where
$\mathfrak{A}_2,\mathfrak{B}_2,\mathfrak{C}_2,\mathfrak{D}_2$ are defined by the same relations written with right-normed bracketing. It is worth noting that for $\mathfrak{Z}\in\{\mathfrak{A},\mathfrak{B},\mathfrak{C},\mathfrak{D}\}$ and $i\in\{1,2\}$, the algebra which is defined by the identity $\mathfrak{Z}_i$ is stable under the $\mathfrak{Ma}$-construction, that is, $
\mathfrak{Ma}\text{-}\mathfrak{Z}_i=\mathfrak{Z}_i.$
Also, one may consider the operad $\Var$ endowed with an additional identity of type $\mathfrak{Z}_i$. All operads obtained in this way are called the \emph{Malcev classification} of the operad $\Var$. For instance, the Malcev classification of the associative operad was studied in \cite{MalcAs1, MalcAs2}. An analogous study for the variety of left-symmetric (pre-Lie) algebras was carried out in \cite{MalcLS}.
Moreover, every multilinear identity of degree $3$ can be expressed as a linear combination of \eqref{as1}, \eqref{as2}, \eqref{as3}, \eqref{as4} and their right-normed analogues. Therefore, we obtain the following commutative diagram:
\begin{center}
\setlength{\unitlength}{1pt}
\begin{picture}(400,90)
  \put(125,75){$\Var$}
  \put(240,75){$\Var_{\mathfrak{Z}_i}$}
  \put(135,70){\vector(0,-1){35}}
  \put(250,70){\vector(0,-1){35}}
  \put(115,20){$\mathfrak{Ma}\text{-}\Var$}
  \put(240,20){$\mathfrak{Ma}\text{-}\Var_{\mathfrak{Z}_i}$}
  \put(160,25){\vector(1,0){75}}
  \put(160,79){\vector(1,0){75}}
\end{picture}
\end{center}
\vspace*{-\baselineskip}
\vspace*{-\baselineskip}

\begin{theorem}
Let $\Var$ be a quadratic operad. Then
\[
\Var=\mathfrak{Ma}\text{-}\Var
\]
if and only if the defining identities of $\Var$ can be chosen in the form
\begin{equation}\label{MaVar-left}
\sum_{\sigma \in \mathbb{S}_3} \alpha_\sigma \bigl(x_{\sigma(1)}x_{\sigma(2)}\bigr)x_{\sigma(3)}=0
\end{equation}
and
\begin{equation}\label{MaVar-right}
\sum_{\sigma \in \mathbb{S}_3} \beta_\sigma \,x_{\sigma(1)}\bigl(x_{\sigma(2)}x_{\sigma(3)}\bigr)=0.
\end{equation}
\end{theorem}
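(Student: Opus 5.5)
Throughout, identify the quadratic operad $\Var=\mathcal F(E)/(R)$ with its space of defining relations $R\subset\mathcal F(E)(3)=\mathcal F_L(E)(3)\oplus\mathcal F_R(E)(3)$. Here $\mathfrak{Ma}\text{-}\Var=\mathcal F(E)/(\mathfrak{Ma}(R))$, and two quadratic operads on the same generators coincide exactly when their relation spaces in arity $3$ coincide. The statement therefore reduces to a linear algebra claim: $\mathfrak{Ma}(R)=R$ if and only if $R=(R\cap\mathcal F_L(E)(3))\oplus(R\cap\mathcal F_R(E)(3))$. Indeed, a family of identities of the forms \eqref{MaVar-left} and \eqref{MaVar-right} spans exactly such a split subspace, and conversely a split $R$ is spanned by a basis of $R\cap\mathcal F_L$ together with a basis of $R\cap\mathcal F_R$. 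We assume $\operatorname{char}\Bbbk\neq 2$, as in the previous theorem.

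\emph{Sufficiency.} Suppose $R$ is spanned by purely left-nested relations $f$ and purely right-nested relations $g$. By \eqref{eq:MaProperties}, $\mathfrak{Ma}(f)=f$ and $\mathfrak{Ma}(g)=-g$. Hence $\mathfrak{Ma}(R)$ is spanned by the same elements up to sign, so $\mathfrak{Ma}(R)=R$ and $\mathfrak{Ma}\text{-}\Var=\Var$.

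\emph{Necessity.} Suppose $\mathfrak{Ma}\text{-}\Var=\Var$, so $\mathfrak{Ma}(R)=R$. Here we must be careful that equality of the varieties means equality of the $\mathbb S_3$-submodules generated in arity $3$, which holds because both ideals are generated in degree $3$; $\mathfrak{Ma}$ is $\mathbb S_3$-equivariant, so it maps the submodule generated by $\Omega$ onto the submodule generated by $\mathfrak{Ma}(\Omega)$. Then for every $r\in R$, using the projections
\[
r_L=\tfrac12\bigl(r+\mathfrak{Ma}(r)\bigr),\qquad r_R=\tfrac12\bigl(r-\mathfrak{Ma}(r)\bigr),
\]
both $r_L$ and $r_R$ lie in $R$. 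Since $r=r_L+r_R$, we get $R=(R\cap\mathcal F_L)\oplus(R\cap\mathcal F_R)$. Choosing bases of the two summands gives defining identities of the forms \eqref{MaVar-left} and \eqref{MaVar-right}. Equivalently, $R$ is the sum of the $+1$ and $-1$ eigenspaces of the involution $\mathfrak{Ma}$ restricted to $R$.

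The main obstacle is not computational but definitional. One must justify that ``$\Var=\mathfrak{Ma}\text{-}\Var$'' means equality of the relation spaces $R$ and $\mathfrak{Ma}(R)$, rather than merely an isomorphism of operads. Otherwise a relabeling of the operation could produce isomorphic but different presentations. The plan is to state this interpretation explicitly, as in the discussion following the preceding theorem, where $\mathfrak{Ma}\text{-}\Var=\mathcal F(E)/(\mathfrak{Ma}(R))$. One also needs the fact that the ideal generated by quadratic relations meets arity $3$ exactly in the $\mathbb S_3$-span of those relations, which holds since there are no lower-arity relations. Finally, the characteristic assumption is needed for the projections. In characteristic $2$, $\mathfrak{Ma}$ is the identity and the equivalence fails trivially, so we remark on this.
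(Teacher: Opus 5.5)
Your proposal is correct and follows essentially the same route as the paper: sufficiency holds because $\mathfrak{Ma}$ fixes left-nested identities and only negates right-nested ones, and necessity comes from adding and subtracting $f$ and $\mathfrak{Ma}(f)$ (your projections $r_L,r_R$) using $\operatorname{char}\Bbbk\neq 2$. Your extra care goes beyond the paper, which leaves these points implicit: you read $\Var=\mathfrak{Ma}\text{-}\Var$ as equality of arity-$3$ relation spaces, you show independence of the choice of defining family via $\mathbb S_3$-equivariance, and you note the failure in characteristic $2$.
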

\begin{proof}
The implication $(\Leftarrow)$ follows immediately from the definition.
Indeed, if the defining identities of $\Var$ can be chosen in the form \eqref{MaVar-left} and \eqref{MaVar-right}, then applying $\mathfrak{Ma}$ fixes every identity of the form \eqref{MaVar-left} and changes the sign of every identity of the form \eqref{MaVar-right}. Hence the same family of identities defines both $\Var$ and $\mathfrak{Ma}\text{-}\Var$, and therefore $\Var=\mathfrak{Ma}\text{-}\Var$.

\smallskip
To prove $(\Rightarrow)$, assume that $\Var=\mathfrak{Ma}\text{-}\Var$. Let $f$ be any defining multilinear identity of degree $3$ for $\Var$. Then $f$ has the form
\[
f=\sum_{\sigma \in \mathbb{S}_3} \alpha_\sigma \bigl(x_{\sigma(1)}x_{\sigma(2)}\bigr)x_{\sigma(3)}
+\sum_{\sigma \in \mathbb{S}_3} \beta_\sigma \,x_{\sigma(1)}\bigl(x_{\sigma(2)}x_{\sigma(3)}\bigr)=0.
\]
Since $\Var=\mathfrak{Ma}\text{-}\Var$, the operad $\Var$ also satisfies the identity $\mathfrak{Ma}(f)=0$, that is,
\[
\sum_{\sigma \in \mathbb{S}_3} \alpha_\sigma \bigl(x_{\sigma(1)}x_{\sigma(2)}\bigr)x_{\sigma(3)}
-\sum_{\sigma \in \mathbb{S}_3} \beta_\sigma \,x_{\sigma(1)}\bigl(x_{\sigma(2)}x_{\sigma(3)}\bigr)=0.
\]
Assuming $\operatorname{char}\Bbbk\neq 2$ (in particular, in characteristic $0$), adding these two identities we obtain
\[
\sum_{\sigma \in \mathbb{S}_3} \alpha_\sigma \bigl(x_{\sigma(1)}x_{\sigma(2)}\bigr)x_{\sigma(3)}=0,
\]
and subtracting the second identity from the first one we obtain
\[
\sum_{\sigma \in \mathbb{S}_3} \beta_\sigma \,x_{\sigma(1)}\bigl(x_{\sigma(2)}x_{\sigma(3)}\bigr)=0.
\]
Thus, the defining identities of $\Var$ can be chosen separately in the left-bracketed form \eqref{MaVar-left} and the right-bracketed form \eqref{MaVar-right}, as required.
\end{proof}
By Theorem and the observation above, we see that Malcev’s classification and the
$\mathfrak{Ma}$-construction for a quadratic operad $\Var$ are closely related. In particular,
the passage $\Var\mapsto \mathfrak{Ma}\text{-}\Var$ separates the defining quadratic relations into
their left- and right-nested parts, which is precisely the kind of structural information that is
reflected in Malcev-type decompositions of identities. Therefore, it is natural to study the operad
$\mathfrak{Ma}\text{-}\Var$ through the lens of Malcev’s classification and to compare the resulting
classes with those arising from $\Var$ itself.


\section{$\mathcal{M}$anin anti-algebras}

\subsection{(Anti)commutative algebras with an identity of degree $3$}

\begin{proposition}
    Let ${\rm A}$ be a commutative algebra satisfying a nontrivial identity of degree $3,$ then ${\rm A}$ is associative or Jacobi--Jordan.
\end{proposition}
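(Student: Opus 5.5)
The plan is to reduce the question to the representation theory of $\mathbb S_3$ acting on multilinear degree $3$ elements of the free commutative (nonassociative) algebra. Since we work over a field of characteristic $0$, any identity of degree $3$ is equivalent to a family of multilinear identities of degree $3$, obtained by the standard linearization (full polarization). So we may assume that ${\rm A}$ satisfies a nonzero multilinear polynomial $f(a,b,c)$ that is not a consequence of commutativity.

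\textbf{Step 1: the relevant $\mathbb S_3$-module.} In the free commutative magma algebra, every multilinear monomial of degree $3$ in $a,b,c$ equals one of
\[
u_1=(ab)c,\qquad u_2=(bc)a,\qquad u_3=(ca)b.
\]
Indeed, $x(yz)=(yz)x$ and $yz=zy$, so each monomial reduces to the form $(xy)z$, and that form is determined by the variable standing outside the inner product. These three elements are linearly independent in the free commutative algebra. The group $\mathbb S_3$ permutes them exactly as it permutes the outer variable. Hence the multilinear component is the permutation module $\Bbbk^3$, which splits (Maschke) as $T\oplus S$. Here $T=\langle u_1+u_2+u_3\rangle$ is the trivial module and $S=\{\sum\lambda_iu_i:\sum\lambda_i=0\}$ is the $2$-dimensional irreducible standard module.

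\textbf{Step 2: classify submodules of identities.} The multilinear identities of degree $3$ satisfied by ${\rm A}$ form an $\mathbb S_3$-submodule $W$ of $T\oplus S$. Since $T\not\cong S$, the only submodules are $0$, $T$, $S$ and $T\oplus S$. By hypothesis $W\neq 0$, so $W\supseteq T$ or $W\supseteq S$.
\begin{itemize}
\item If $W\supseteq T$, then $(ab)c+(bc)a+(ca)b=0$ holds, so ${\rm A}$ is Jacobi--Jordan.
\item If $W\supseteq S$, then $u_1-u_2=(ab)c-(bc)a=(ab)c-a(bc)$ lies in $W$, so ${\rm A}$ is associative.
\end{itemize}
The case $W=T\oplus S$ is covered by both; there $3(ab)c=0$, so ${\rm A}$ is nilpotent of index $3$, which is consistent with the statement.

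\textbf{Expected main obstacle.} The only delicate point is the reduction in the first paragraph together with the meaning of ``nontrivial''. One must justify that a possibly non-multilinear identity of degree $3$ yields, after polarization in characteristic $0$, a nonzero multilinear consequence that does not vanish modulo commutativity. One also has to check the linear independence of $u_1,u_2,u_3$ in the free commutative algebra, which is clear from its monomial basis. After that, the argument is just Maschke's theorem and the decomposition of the permutation module.
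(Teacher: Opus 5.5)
Your proof is correct, and it follows a genuinely different route from the paper's: you use module theory where the paper computes by hand. Both arguments rest on the same dichotomy, and the two correspond step by step.

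\textbf{The Jacobi--Jordan case.} The paper normalizes a multilinear identity to $(xy)z+\beta(yz)x+\gamma(zx)y=0$ and adds its three cyclic images, obtaining $(1+\beta+\gamma)\big((xy)z+(yz)x+(zx)y\big)=0$. This is exactly your projection onto the trivial summand $T$, and it gives the Jacobi--Jordan case when $1+\beta+\gamma\neq 0$.

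\textbf{The associative case.} When $1+\beta+\gamma=0$, the identity lies in your standard summand $S$. The paper subtracts transposed copies of it to get $(2+\beta)\big((xy)z-(xz)y\big)=0$ and a companion relation with coefficient $2\beta+1$. These two scalars cannot vanish together unless $3=0$ in $\Bbbk$, so associativity follows. This step is a hand verification that every nonzero element of $S$ generates $S$. It is also the same point where $\operatorname{char}\Bbbk\neq 3$ is needed for $S$ to be irreducible.

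\textbf{What each version buys.} In your version, Maschke's theorem and the multiplicity-free decomposition $T\oplus S$ make the case split transparent, since there are only four submodules. You also explicitly justify passing from an arbitrary nontrivial degree-$3$ identity to a nonzero multilinear one modulo commutativity, via homogeneous components and polarization in characteristic $0$. The paper skips this with ``it is easy to see'', together with a tacit relabelling that makes the coefficient of $(xy)z$ nonzero. The paper's computation, in turn, is fully elementary and self-contained, and it needs no representation theory.
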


\begin{proof}
It is easy to see, that there are $\beta,\gamma \in  \mathbb{C},$ such that 
${\rm A}$ satisfies the following identity 
\begin{center}
    $(xy)z+\beta (yz)x +\gamma (zx)y=0.$\end{center}
Which is equivalent to identities 
\begin{center}    
$(yz)x+\beta (zx)y +\gamma (xy)z=0$ and 
$(zx)y+\beta (xy)z +\gamma (yz)x=0.$  
\end{center}
Summarizing the last $3$ identities, we have 
$(1+\beta+\gamma) \big((xy)z+   (yz)x +   (zx)y\big)=0.$  Then,
\begin{enumerate}
    \item[I.] If $\beta+\gamma \neq-1,$ then ${\rm A}$ is a Jacobi-Jordan algebra.

    \item[II.] If $\gamma=-\beta-1,$ then ${\rm A}$  satisfies the following pair of identities
    \begin{center}
$(xy)z+\beta (yz)x-(1+\beta)(xz)y=0$ and 
$(xz)y+\beta (zy)x -(1+\beta)(xy)z=0;$\\
$(xy)z+\beta (yz)x -(1+\beta)(xz)y=0$ and 
$(yx)z+\beta (xz)y -(1+\beta)(yz)x=0.$
    \end{center}
\noindent Subsequently, subtracting these identities, we have 
\begin{center}
    $(2+\beta)\big((xy)z-(xz)y\big)=0$\  and \  
    $(2\beta+1)\big((xy)z-(xz)y\big)=0.$
\end{center}
\noindent     It means that  ${\rm A}$ is associative.

\end{enumerate}

\end{proof}

\begin{proposition}
    Let ${\rm A}$ be an anticommutative algebra satisfying a nontrivial identity of degree $3,$ then ${\rm A}$ is anti-associative or Lie.
\end{proposition}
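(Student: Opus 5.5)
The plan is to mirror the proof of the commutative case. First I reduce the space of degree-$3$ multilinear monomials. In an anticommutative algebra every monomial $x(yz)$ equals $-(yz)x$, and $(yx)z=-(xy)z$. Hence every multilinear degree-$3$ element is a combination of the three monomials $(xy)z$, $(yz)x$, $(zx)y$.

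So a nontrivial identity can be normalized, after possibly renaming variables so that the coefficient of $(xy)z$ is nonzero, to
\[
(xy)z+\beta (yz)x+\gamma (zx)y=0 .
\]
Cyclically permuting $x\to y\to z\to x$ gives two more identities:
\[
(yz)x+\beta (zx)y+\gamma (xy)z=0,
\qquad
(zx)y+\beta (xy)z+\gamma (yz)x=0 .
\]
Adding all three yields
\[
(1+\beta+\gamma)\bigl((xy)z+(yz)x+(zx)y\bigr)=0 .
\]

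\textbf{Case I: $\beta+\gamma\neq-1$.} Then the Jacobi identity holds, and $\mathrm A$ is Lie.

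\textbf{Case II: $\gamma=-1-\beta$.} Now I apply transpositions rather than cyclic shifts, exactly as in the commutative proof. Swapping $y\leftrightarrow z$ and using anticommutativity ($(xz)y$, $(zy)x=-(yz)x$, $(yx)z=-(xy)z$) gives a second identity in the same three monomials. Swapping $x\leftrightarrow y$ gives a third.

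Subtracting suitable multiples of these from the original identity should isolate a relation of the form
\[
c\,\bigl((xy)z+(xz)y\bigr)=0
\]
with $c$ a nonzero constant. In the anticommutative setting this should be equivalent to anti-associativity. The reason is that $x(yz)=-(yz)x$, and anti-associativity $(xy)z=-x(yz)=(yz)x$ is a cyclic-invariance statement. Combined with $\gamma=-1-\beta$ and the sum relation, it should reduce to $(xy)z=(yz)x=(zx)y$, that is, $(xy)z+x(yz)=0$.

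Concretely, I would write the three transformed identities as linear equations in $u=(xy)z$, $v=(yz)x$, $w=(zx)y$ and their images. I would then check that the solution space forces $u=v=w$ unless one coefficient such as $2+\beta$ or $2\beta+1$ vanishes.

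The main obstacle is the degenerate values of $\beta$ where one of these coefficients vanishes, by analogy with $(2+\beta)$ and $(2\beta+1)$ in the commutative case. I must check that both coefficients cannot vanish simultaneously; in characteristic $0$ this holds, since $\beta=-2$ and $\beta=-\tfrac12$ are incompatible. Then at least one of the derived relations is a nonzero multiple of the desired identity, which yields anti-associativity. I also need to double-check the sign bookkeeping under anticommutativity, because the symmetric relation $u=v=w$ (anti-associative) replaces the commutative case's associativity. The key sanity check is that $u+v+w=0$ and $u=v=w$ are exactly the two $\mathbb S_3$-isotypic possibilities.
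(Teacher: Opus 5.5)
Your plan is correct and is essentially the paper's proof: the same normalization to $(xy)z+\beta(yz)x+\gamma(zx)y=0$, the same cyclic sum giving $(1+\beta+\gamma)$ times the Jacobian, and in Case II the same transpositions $y\leftrightarrow z$ and $x\leftrightarrow y$. Added to the original identity, these give exactly $(2+\beta)\big((xy)z+(xz)y\big)=0$ and $(2\beta+1)\big((xz)y+(yz)x\big)=0$, which cannot both be trivial. One small correction: in Case II the sum relation is vacuous, so you get from either relation to $(xy)z=(yz)x=(zx)y$ (hence $(xy)z=-x(yz)$) simply by cyclically relabeling the variables, which is all the paper's terse ``it means that A is anti-associative'' relies on.
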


\begin{proof}
It is easy to see, that there are $\beta,\gamma \in  \mathbb{C},$ such that 
${\rm A}$ satisfies the following identity 
\begin{center}
    $(xy)z+\beta (yz)x +\gamma (zx)y=0.$\end{center}
Which is equivalent to identities 
\begin{center}    
$(yz)x+\beta (zx)y +\gamma (xy)z=0$ and 
$(zx)y+\beta (xy)z +\gamma (yz)x=0.$  
\end{center}
Summarizing the last $3$ identities, we have 
$(1+\beta+\gamma) \big((xy)z+   (yz)x +   (zx)y\big)=0.$  Then,
\begin{enumerate}
    \item[I.] If $\beta+\gamma \neq-1,$ then ${\rm A}$ is a Lie algebra.

    \item[II.] If $\gamma=-\beta-1,$ then ${\rm A}$  satisfies the following  identities
    \begin{center}
$(xy)z+\beta (yz)x+(1+\beta)(xz)y=0,$ 
 
$(xz)y+\beta (zy)x +(1+\beta)(xy)z=0,$

$(yx)z+\beta (xz)y +(1+\beta)(yz)x=0.$
    \end{center}
\noindent  Summing the first identity with the second and third, we have
\begin{center}
    $(2+\beta)\big((xy)z+(xz)y\big)=0$\  and \  
    $(2\beta+1)\big((xz)y+(yz)x\big)=0.$
\end{center}
\noindent     It means that  ${\rm A}$ is anti-associative.
\end{enumerate}
\end{proof}

Let us fix the following notations for   operads that govern the  varieties of  algebras:
\begin{longtable}{lcl}
${\rm AC}$ &$-$&   the variety of associative commutative algebras;\\  
${\rm AA}$ &$-$&    the variety of anti-associative anti-commutative algebras;\\  
${\rm Lie}$ &$-$&    the variety of Lie  algebras;\\  
${\rm JJ}$ &$-$&    the variety of Jacobi--Jordan  algebras;\\ 
${\rm AM}$ &$-$&    the variety of anti-commutative algebras;\\  
${\rm CM}$ &$-$&   the variety of   commutative algebras.  

\end{longtable}

\begin{proposition}
    The set $\Big\{ {\rm AC},  \ {\rm AA},\  {\rm Lie},\  {\rm JJ},  {\rm AM},  {\rm CM} \Big\}$ with the white Manin product gives  a Clifford semigroup.
    In particular, the multiplication is given by the following table

\begin{longtable}{c|c|c|c|c|c|c}

$\circ$ & {\rm AC} & {\rm AA} & {\rm Lie} & {\rm JJ} & {\rm CM} & {\rm AM} \\
\hline 
{\rm AC} & {\rm AC} & {\rm AA} & {\rm Lie} & {\rm JJ} &  {\rm CM} & {\rm AM} \\
\hline 

{\rm AA} & {\rm AA} & {\rm AC} & {\rm JJ} & {\rm Lie} & {\rm AM} & {\rm CM} \\
\hline 

{\rm Lie} & {\rm Lie} & {\rm JJ} & {\rm CM} & {\rm AM} & {\rm AM} & {\rm CM} \\
\hline 

{\rm JJ} & {\rm JJ} & {\rm Lie} & {\rm AM} & {\rm CM} & {\rm CM} & {\rm AM}\\
\hline
{\rm CM} & {\rm CM} & {\rm AM} & {\rm AM} & {\rm CM} & {\rm CM} & {\rm AM}\\
\hline
{\rm AM} & {\rm AM} & {\rm CM} & {\rm CM} & {\rm AM} & {\rm AM} & {\rm CM}\\

\hline

\end{longtable}

\end{proposition}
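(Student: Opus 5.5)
The plan is to compute every white Manin product directly, using the standard description of the white product of binary quadratic operads generated by one operation. If $\mathcal P$ and $\mathcal Q$ are generated by a single (commutative or anticommutative) operation, then $\mathcal P\circ\mathcal Q$ is the suboperad of the Hadamard product $\mathcal P\otimes\mathcal Q$ generated by $\mu\otimes\nu$. Concretely, a $(\mathcal P\circ\mathcal Q)$-algebra structure is obtained by asking which degree-$3$ identities the operation $(a\otimes u)(b\otimes v)=ab\otimes uv$ satisfies on $S\otimes U$ for $S\in\mathcal P$, $U\in\mathcal Q$ free. This is exactly the kind of tensor computation already carried out in the section on Koszul duals.

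\textbf{Symmetry and the parity rule.} The symmetry of the operation on $S\otimes U$ is the product of the symmetries of the two factors. Thus $\mathrm{comm}\otimes\mathrm{comm}$ and $\mathrm{anti}\otimes\mathrm{anti}$ give a commutative operation, while the mixed products give an anticommutative one. This parity rule determines whether each entry of the table lies among $\{{\rm AC},{\rm JJ},{\rm CM}\}$ or among $\{{\rm AA},{\rm Lie},{\rm AM}\}$.

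\textbf{Degree-$3$ identities.} By the two preceding propositions, a commutative (resp.\ anticommutative) operad carrying a nontrivial degree-$3$ identity is ${\rm AC}$ or ${\rm JJ}$ (resp.\ ${\rm AA}$ or ${\rm Lie}$). So for each pair it suffices to find which degree-$3$ identity, if any, survives in the three-dimensional space spanned by $(ab)c$, $(bc)a$, $(ca)b$ in arity $3$. I would check the following rules one by one.
\begin{itemize}
\item ${\rm AC}$ acts as a unit. In ${\rm AC}(3)$ all three monomials coincide, so the tensor product just copies the relations of the other factor.
\item ${\rm AA}\circ{\rm AA}={\rm AC}$ and ${\rm AA}\circ{\rm AC}={\rm AA}$. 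These follow because ${\rm AA}(3)$ is one-dimensional with the three monomials equal up to sign; the signs cancel pairwise to give associativity, or survive to give anti-associativity.
\item ${\rm AA}\circ{\rm Lie}={\rm JJ}$ and ${\rm AA}\circ{\rm JJ}={\rm Lie}$. Tensoring the Jacobi-type sum with the alternating signs of ${\rm AA}$ converts $\sum$ (Jacobi--Jordan) into Jacobi and back.
\item Products with ${\rm CM}$ or ${\rm AM}$, which are free in arity $3$, and products of ${\rm Lie}$ or ${\rm JJ}$ with themselves or with each other, give no identity. Here the two-dimensional spaces ${\rm Lie}(3)$ and ${\rm JJ}(3)$ tensor to a space in which the three products $\mu\otimes\nu$ composed cyclically are linearly independent. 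So the result is free commutative or anticommutative, according to the parity rule.
\end{itemize}

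\textbf{Clifford semigroup.} Once the table is established, associativity of the operation follows from the general associativity of the white product, or can be read off the table. The structure is then verified as a semilattice of groups. The idempotents are ${\rm AC}$ and ${\rm CM}$, and they commute. The group component of ${\rm AC}$ is $\{{\rm AC},{\rm AA}\}\cong\mathbb Z_2$, and that of ${\rm CM}$ is $\{{\rm CM},{\rm AM}\}\cong\mathbb Z_2$. The middle elements ${\rm Lie}$ and ${\rm JJ}$ are problematic: ${\rm Lie}\circ{\rm Lie}={\rm CM}$, so ${\rm Lie}$ is not in any subgroup. This would obstruct being Clifford, since a Clifford semigroup is completely regular.

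\textbf{Main obstacle.} The real work is the Lie/JJ block, and the table and the Clifford claim must be checked against each other there. I would first carefully recompute ${\rm Lie}\circ{\rm Lie}$ by a dimension count of $\langle (\mu\otimes\nu)\text{-monomials}\rangle\subset{\rm Lie}(3)\otimes{\rm Lie}(3)$. If it is indeed free commutative, the Clifford assertion must be interpreted as concerning the subsemigroup of idempotent-generated groups, or corrected. I would present the table as the main content and verify regularity entry by entry.
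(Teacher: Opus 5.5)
\textbf{Verdict.} Your computation of the table is correct, and your doubt about the Clifford assertion is justified; you can state it without the conditional. The paper gives no proof of this proposition, and the ``Clifford semigroup'' part cannot be proved, because the correct table refutes it.

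\textbf{Why the Clifford claim fails.} Your dimension count settles the ${\rm Lie}$/${\rm JJ}$ block. Take $e_1=[[a,b],c]$, $e_2=[[b,c],a]$ and $[[c,a],b]=-e_1-e_2$ in ${\rm Lie}(3)$. The images of $(ab)c$, $(bc)a$, $(ca)b$ in ${\rm Lie}(3)\otimes{\rm Lie}(3)$ are $e_1\otimes e_1$, $e_2\otimes e_2$ and $(e_1+e_2)\otimes(e_1+e_2)$. These are linearly independent, so ${\rm Lie}\circ{\rm Lie}={\rm CM}$. The same computation with a basis of ${\rm JJ}(3)$ gives ${\rm JJ}\circ{\rm JJ}={\rm CM}$ and ${\rm Lie}\circ{\rm JJ}={\rm AM}$, as in the table.

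Consequently ${\rm Lie}$ is not even a regular element. For every $x$ we have ${\rm Lie}\circ x\in\{{\rm Lie},{\rm JJ},{\rm CM},{\rm AM}\}$, and each of these multiplied by ${\rm Lie}$ lies in $\{{\rm CM},{\rm AM}\}$. Hence ${\rm Lie}\circ x\circ{\rm Lie}\neq{\rm Lie}$ for all $x$, and the same holds for ${\rm JJ}$. A Clifford semigroup is a union of groups, hence regular, so the proposition is false as stated. Your planned entry-by-entry regularity check fails exactly at ${\rm Lie}$ and ${\rm JJ}$.

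\textbf{What you should state instead.} Replace your vague fallback with the following. Encode each operad by its arity-$3$ type and its parity:
${\rm AC}=(1,+)$, ${\rm AA}=(1,-)$, ${\rm JJ}=(t,+)$, ${\rm Lie}=(t,-)$, ${\rm CM}=(0,+)$, ${\rm AM}=(0,-)$.
Then the white product is componentwise multiplication in $N\times\mathbb Z_2$, where $N=\{1,t,0\}$ is the monoid with identity $1$, zero $0$ and $t^2=0$. So the set is a commutative monoid with unit ${\rm AC}$, isomorphic to $N\times\mathbb Z_2$. Its maximal Clifford subsemigroup is $\{{\rm AC},{\rm AA},{\rm CM},{\rm AM}\}$, a semilattice of two copies of $\mathbb Z_2$ over ${\rm AC}>{\rm CM}$. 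The set $\{{\rm CM},{\rm AM}\}$ is the minimal ideal.

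\textbf{Two precision points.} Neither changes any entry of the table.
\begin{enumerate}
\item The white product is the quadratic operad whose arity-$3$ component is the image of $\mathcal F(E\otimes F)(3)$ in $\mathcal P(3)\otimes\mathcal Q(3)$. The suboperad generated inside the Hadamard product can be a proper quotient of it in higher arities. Only arity $3$ matters for identifying these quadratic operads, and that is all you use.
\item The two preceding propositions only place a variety with a nontrivial cubic identity inside ${\rm AC}$ or ${\rm JJ}$ (resp.\ ${\rm AA}$ or ${\rm Lie}$). What actually identifies each entry is your rank computation. Since ${\rm CM}(3)$ and ${\rm AM}(3)$ are multiplicity-free $\mathbb S_3$-modules, the rank alone determines the kernel:
rank $1$ gives the two cyclic equalities, hence ${\rm AC}$ or ${\rm AA}$;
rank $2$ gives only the cyclic-sum relation, hence ${\rm JJ}$ or ${\rm Lie}$;
rank $3$ gives the free operad.
\end{enumerate}
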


\subsection{$\mathcal{M}$anin anti-algebras}

\begin{definition}
Let $\mathcal{O}$ be the operad that governs the variety of $\Omega$-algebras, 
where $\Omega$ is a family of polynomial identities.  
Then ${\rm AA} \circ \mathcal{O}$ be the operad that governs the variety of $\mathcal{M}$anin anti-$\Omega$-algebras 
$\big($or $\mathcal{M}$a-$\Omega$-algebras$\big)$.

\end{definition}

\begin{theorem}
    If a variety of algebras is defined by the  family $\Omega$ of $n$ polynomial identities
\begin{center}
    $\Omega= \Big\{\sum\limits_{\sigma \in \mathbb{S}_3}
    \big(\alpha_\sigma^j (x_{\sigma(1)}x_{\sigma(2)})x_{\sigma(3)}+
    \beta_\sigma^j x_{\sigma(1)}(x_{\sigma(2)}x_{\sigma(3)}) \big) \ = \ 0 \Big\}_{1\leq j\leq n},$
\end{center}
then the variety of Manin anti-$\Omega$-algebras is defined by the following family of identities 

\begin{center}
    $\Omega^{\mathfrak a}= \Big\{\sum\limits_{\sigma \in \mathbb{S}_3}  (-1)^{\sigma}\big(\alpha_\sigma^j (x_{\sigma(1)}x_{\sigma(2)})x_{\sigma(3)}-
     \beta_\sigma^j x_{\sigma(1)}(x_{\sigma(2)}x_{\sigma(3)}) \big)\ = \ 0  \Big\}_{1\leq j\leq n}.$
\end{center}

\end{theorem}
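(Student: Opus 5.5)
The plan is to compute the white Manin product ${\rm AA}\circ\mathcal O$ directly from its definition for binary quadratic operads. Take $\mathcal O=\mathcal F(E)/(R)$ with $E$ the regular $\mathbb S_2$-module spanned by $\mu$ and $\mu^{op}$. Take ${\rm AA}=\mathcal F(F)/(S)$ with $F$ the sign module spanned by one anticommutative operation $[\cdot,\cdot]$. By definition, ${\rm AA}\circ\mathcal O=\mathcal F(F\otimes E)/(K)$. Here $K$ is the kernel of the natural $\mathbb S_3$-equivariant map
\[
\Phi:\ \mathcal F(F\otimes E)(3)\longrightarrow {\rm AA}(3)\otimes \mathcal O(3),
\]
which sends a tree monomial decorated by $[\cdot,\cdot]\otimes\mu$ to the tensor product of the corresponding classes.

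\textbf{Step 1 (generators).} Since $F\otimes E\cong{\rm sgn}\otimes\Bbbk\mathbb S_2\cong\Bbbk\mathbb S_2$, I introduce a single non-symmetric operation $x\cdot y:=[x,y]\otimes\mu(x,y)$. Its opposite is $[y,x]\otimes\mu(y,x)=-[x,y]\otimes\mu^{op}(x,y)$, so $x\cdot y$ has no symmetry. Hence $\mathcal F(F\otimes E)(3)$ has the same $12$ monomials $(x_{\sigma(1)}x_{\sigma(2)})x_{\sigma(3)}$ and $x_{\sigma(1)}(x_{\sigma(2)}x_{\sigma(3)})$ as $\mathcal F(E)(3)$.

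\textbf{Step 2 (structure of ${\rm AA}(3)$).} Using anticommutativity and $(xy)z=-x(yz)$, I will check that ${\rm AA}(3)$ is one-dimensional, spanned by $e=(x_1x_2)x_3$. In ${\rm AA}$ one has
\[
(x_{\sigma(1)}x_{\sigma(2)})x_{\sigma(3)}=(-1)^{\sigma}e,
\qquad
x_{\sigma(1)}(x_{\sigma(2)}x_{\sigma(3)})=-(-1)^{\sigma}e.
\]
The first equality needs the cyclic invariance $(xy)z=-x(yz)=(yz)x$ together with the sign change under swapping $x,y$. The second then follows from anti-associativity. This sign bookkeeping is the main point to verify carefully, and I expect it to be the only real obstacle. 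In particular ${\rm AA}(3)\neq0$ must be ensured, for instance by exhibiting a nonzero anticommutative anti-associative algebra with nonzero cube, or by citing the Clifford semigroup table from the preceding proposition, where ${\rm AA}$ acts invertibly.

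\textbf{Step 3 (kernel).} Define the linear involution $\tau$ on the $12$-dimensional space by
\[
\tau\big((x_{\sigma(1)}x_{\sigma(2)})x_{\sigma(3)}\big)=(-1)^\sigma(x_{\sigma(1)}x_{\sigma(2)})x_{\sigma(3)},
\qquad
\tau\big(x_{\sigma(1)}(x_{\sigma(2)}x_{\sigma(3)})\big)=-(-1)^\sigma x_{\sigma(1)}(x_{\sigma(2)}x_{\sigma(3)}).
\]
By Step 2, $\Phi(g)=e\otimes\overline{\tau(g)}$, where the bar denotes the class in $\mathcal O(3)=\mathcal F(E)(3)/R$. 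Hence $g\in K$ if and only if $\tau(g)\in R$, that is, $K=\tau(R)$ because $\tau^2={\rm id}$. Since $R$ is spanned by the $\mathbb S_3$-orbits of the $n$ identities in $\Omega$, I will check that $\tau$ is compatible with the $\mathbb S_3$-action up to the sign character. Then $\tau(R)$ is the $\mathbb S_3$-module generated by $\tau$ of the given identities, and these are exactly the elements of $\Omega^{\mathfrak a}$. This shows that ${\rm AA}\circ\mathcal O$ is presented by $\Omega^{\mathfrak a}$, as claimed.
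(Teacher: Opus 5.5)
Your proposal is correct and follows essentially the same route as the paper. Both arguments compute the sign table $(x_{\sigma(1)}x_{\sigma(2)})x_{\sigma(3)}=(-1)^\sigma e$, $x_{\sigma(1)}(x_{\sigma(2)}x_{\sigma(3)})=-(-1)^\sigma e$ in the one-dimensional ${\rm AA}(3)$, and then observe that every image in ${\rm AA}(3)\otimes\mathcal O(3)$ factors as $e\otimes(\text{sign-twisted monomial})$, so that the elements of $\Omega^{\mathfrak a}$ are sent to $e\otimes \overline{f_j}=0$.

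Your involution $\tau$ with $K=\tau(R)$ also gives the reverse inclusion, namely that the kernel is no larger than the $\mathbb S_3$-span of $\Omega^{\mathfrak a}$, which the paper leaves implicit. Likewise, your $2$-dimensional generating space is the accurate form of the paper's ``$4$-dimensional'' space with $x_1\prec x_2=-(x_1\succ x_2)$.
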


\begin{proof}
This result may be checked with a straightforward computation. Namely, firstly we analyze the following monomials in a free antiassociative anticommutative algebra:
\begin{longtable}{lcllcllcllcl}
$(ab)c$ &$=$&$ (ab)c,$ &
$(ac)b$ &$=$&$ -(ab)c,$ &
$a(bc)$ &$=$&$ -(ab)c,$ &
$a(cb)$ &$=$&$ (ab)c,$ \\
$(bc)a$ &$=$&$ (ab)c,$ &
$(ba)c$ &$=$&$ -(ab)c,$ &
$b(ca)$ &$=$&$ -(ab)c,$ &
$b(ac)$ &$=$&$ (ab)c,$ \\
$(ca)b$ &$=$&$ (ab)c,$ &
$(cb)a$ &$=$&$ -(ab)c,$ &
$c(ab)$ &$=$&$ -(ab)c,$ &
$c(ba)$ &$=$&$ (ab)c,$ 
\end{longtable}
\noindent i.e.,   $(x_1x_2)x_3  =  (-1)^\sigma (x_{\sigma(1)}x_{\sigma(2)})x_{\sigma(3)}$
and $(x_1x_2)x_3  =  -  (-1)^\sigma x_{\sigma(1)}(x_{\sigma(2)}x_{\sigma(3)})$ for each $\sigma \in \mathbb S_3.$

The operad ${\rm AA} \circ \mathcal{O}$ is generated by $4$-dimensional space $({\rm AA} \circ \mathcal{O})(2)$
spanned by
\begin{longtable}{lcllcl}
$(x_1 \prec x_2) $&$= $&$x_1x_2 \otimes x_1 \bullet x_2,$&$(x_2 \prec x_1)$&$ = $&$x_2x_1 \otimes x_2 \bullet x_1,$\\
$(x_1 \succ x_2)$&$ = $&$x_2x_1 \otimes x_1 \bullet x_2,$&$ (x_2 \succ x_1)$&$ =$&$ x_1x_2 \otimes x_2 \bullet x_1,$
\end{longtable}
\noindent where 
$(x_1 \prec x_2) \ = \ x_1x_2 \otimes x_1 \bullet x_2 \ = \ - \   x_2x_1 \otimes x_1 \bullet x_2 \ = \ -\ (x_1 \succ x_2).$

For finish our proving, we have to mention only the following 

$\sum\limits_{\sigma \in \mathbb{S}_3} (-1)^\sigma \big(\alpha_\sigma^j (x_{\sigma(1)} \prec x_{\sigma(2)}) \prec x_{\sigma(3)}-
    \beta_\sigma^j x_{\sigma(1)}\prec(x_{\sigma(2)}\prec x_{\sigma(3)}) \big) \ = $

\quad \quad\quad $= \  \sum\limits_{\sigma \in \mathbb{S}_3} (-1)^\sigma 
\Big(\alpha_\sigma^j \big( (x_{\sigma(1)} x_{\sigma(2)})  x_{\sigma(3)} \big) \otimes \big(
 (x_{\sigma(1)} \bullet x_{\sigma(2)})  \bullet x_{\sigma(3)} \big)-$ 
 \begin{flushright}
$    \beta_\sigma^j  \big(  x_{\sigma(1)} (x_{\sigma(2)}  x_{\sigma(3)}) \big) \otimes 
 \big(  x_{\sigma(1)} \bullet (x_{\sigma(2)} \bullet x_{\sigma(3)}) \big) \Big)\  = $
 \end{flushright}

 \begin{flushright}
 $ = \ \big(x_1x_2\big)x_3 \otimes \Big(
 \sum\limits_{\sigma \in \mathbb{S}_3} \big(\alpha_\sigma^j (x_{\sigma(1)} \bullet  x_{\sigma(2)}) \bullet  x_{\sigma(3)} +
    \beta_\sigma^j x_{\sigma(1)}\bullet (x_{\sigma(2)}\bullet  x_{\sigma(3)}) \big) \ = \ 0.$
 \end{flushright}

\end{proof}

\subsection{Examples}

Below we give a list of $\mathcal M$anin anti-varieties for some known varieties of algebras.

\begin{longtable}{|l|rcl|rcl|}

\hline
 & \multicolumn{3}{|c|}{$\rm{Var}$} & 
 \multicolumn{3}{|c|}{$\mathcal{M}$a-$\rm{Var}$}\\

\hline\hline
Associative &$(xy)z $&$=$&$  x(yz)$ & 
$(xy)z $&$=$&$- \   x(yz)$ \\  
\hline
\hline
Shift associative &$(xy)z $&$=$&$  y(zx)$ & 
$(xy)z $&$=$&$- \   y(zx)$ \\  

\hline \hline
Leibniz &$(xy)z$&$=$&$ (xz)y +x(yz)  $  & 
$(xy)z$&$=$&$ -(xz)y  - x(yz) $ \\  

\hline\hline

Zinbiel & $(xy)z$&$=$&$ x (yz+zy)$ &
$(xy)z $&$=$&$ \ \   x (zy-yz)$ \\
\hline\hline

right symmetric  & 
$\big(x,y,z\big) $&$=$&$ \ \   \big(x,z,y\big) $ &
$\big\{x,y,z\big\} $&$=$&$ - \    \big\{x,z,y\big\}$ \\

\hline\hline

right alternative  & 
$\big(x,y,z\big) $&$=$&$ -\big(x,z,y\big) $ &
$\big\{x,y,z\big\} $&$=$&$ \big\{x,z,y\big\}$ \\

\hline
\hline

flexible   & 
$\big(x,y,z\big)  $&$=$&$    -  \big(z,y,x\big) $ &
$\big\{x,y,z \big\}  $&$=$&$      \big\{ z,y,x\big\} $ \\
\hline
\hline

Lie-admissible    & 
\multicolumn{3}{l|}{$\sum\limits_{\sigma \in \mathbb S_3}\big(-1\big)^{\sigma} \big(x_{\sigma(1)},x_{\sigma(2)},x_{\sigma(3)} \big) =0$} &
\multicolumn{3}{l|}{$\sum\limits_{\sigma \in \mathbb S_3}  \big\{x_{\sigma(1)},x_{\sigma(2)},x_{\sigma(3)} \big\} =0$} \\
\hline
\hline

assosymmetric  & 
$\big(x,y,z\big)  $&$=$&$   \big(x,z,y\big) $ &
$\big\{x,y,z\big\} $&$=$&$ - \ \big \{x,z,y\big\}$ \\

 & 
$\big(x,y,z\big)  $&$=$&$   \big(y,x,z\big) $ &
$\big\{x,y,z\big\}  $&$=$&$ - \  \big\{y,x,z\big\} $ \\

\hline\hline

Novikov  & 
$\big(x,y,z\big) $&$=$&$ \big(x,z,y\big)  $ &
$\big\{x,y,z\big\}  $&$=$&$ -  \ \big\{ x,z,y\big\}$ \\

 & 
$x(yz)  $&$=$&$ y(xz)  $ &
$x(yz)  $&$=$&$ - \ y(xz)  $ \\

\hline
\hline
bicommutative  & 
$x(yz)  $&$=$&$ y(xz)  $ &
$x(yz)  $&$=$&$ - \ y(xz)  $ \\

 & 
$(xy)z  $&$=$&$ (xz)y  $ &
$(xy)z  $&$=$&$ - \ (xz)y  $ \\

\hline
\hline
alternative  & 
$\big(x,y,z\big)  $&$=$&$   -  \big(x,z,y\big)$ &
$\big\{x,y,z\big\}  $&$=$&$  \big\{x,z,y\big\}$ \\

 & 
$\big(x,y,z\big)  $&$=$&$   -  \big(y,x,z\big)  $ &
$\big\{x,y,z\big\}  $&$=$&$   \big\{ y,x,z\big\}  $ \\

 \hline
\hline

$\big(-1,1\big)$- & 
$\big(x,y,z\big)  $&$=$&$  -  \big(x,z,y\big) $ &
$\big\{x,y,z\big\}  $&$=$&$     \big\{x,z,y\big\}$ \\
 & 
\multicolumn{3}{l|}{$\big(x,y,z\big)+\big(y,z,x\big)+\big(z,x,y\big)  \ = \ 0  $} &
\multicolumn{3}{l|}{$ \big\{x,y,z\big\}+\big\{y,z,x\big\}+\big\{z,x,y\big\} \ = \ 0 $} \\

 \hline
\hline

anti-pre-Lie  & 
$\big\{x,y,z\}  $&$=$&$   \big\{y,x,z\big\} $ &
$\big(x,y,z\big)  $&$=$&$   -  \big(y,x,z\big)$ \\
 & 
\multicolumn{3}{l|}{$[x,y]z +[y,z]x +[z,x]y  \ = \ 0  $} &
\multicolumn{3}{l|}{$ (x\circ y)z + (y\circ z)x +(z\circ x)y  \ = \ 0 $} \\

\hline

\end{longtable}

\noindent Remarks:
\begin{enumerate}
\item[$(\rm{A})$] The notion of   $\mathcal{M}$a-Leibniz algebras coincides with the notion of anti-Leibniz algebras, which were introduced in {\rm \cite{antileib}}.

\item[$(\rm{B})$] The notion of   $\mathcal{M}$a-Zinbiel algebras coincides with the notion of $\big(-1\big)$-Zinbiel algebras, which were introduced in {\rm \cite{dL}}.

 \item[$(\rm{C})$] The notion of  $\mathcal{M}$a-bicommutative  algebras coincides with the notion of anti-bicommutative   algebras, which were introduced in {\rm \cite{BBK}}.

 \item[$(\rm{D})$] The notion of  $\mathcal{M}$a-Novikov  algebras coincides with the notion of anti-Novikov  algebras, which were introduced in {\rm \cite{CMM}}.

\item[$(\rm{E})$] The notion of $\mathcal{M}$a-Lie admissible algebras coincides with the notion of Jacobi-Jordan-admissible  algebras introduced in {\rm \cite{BBMM}}.

\item[$(\rm{F})$] The notion of $\mathcal{M}$a-right alternative algebras coincides with the notion of pre-Jacobi-Jordan  algebras introduced in {\rm \cite{BBMM}}.

\item[$(\rm{G})$] The notion of $\mathcal{M}$a-anti-pre-Lie   coincides with the notion of 
anti-pre-Jacobi-Jordan   algebras introduced in {\rm \cite{AA26}}.

 \item[$(\rm{H})$] The notion of  $\mathcal{M}$a-Leibniz algebras 
and  $\mathcal{M}$a-Zinbiel algebras are special particular cases of generalized Leibniz and generalized Zinbiel algebras, which were introduced in {\rm \cite{MO14}}.
 
\end{enumerate}

\subsection{Symmetric affinization of 
$\mathcal{M}a$-Novikov algebras}

Let ${\rm A}$ be an algebra, then 
$\big(\mathfrak{aff}({\rm A}) = {\rm A} \otimes \mathbb{C}[t, t^{-1}], \circ \big)$ 
is called the affinization of ${\rm A}$ if 
\begin{center}
$at^n \circ bt^m = \big(m(ab)-n(ba)\big)t^{n+m-1}.$
\end{center}
It is known, 
that
$\mathfrak{aff}({\rm A})$ is Lie if and only if ${\rm A}$ is Novikov \cite{BN85}. 

Following this idea, we say that   
$\big(\mathfrak{aff}^+({\rm A}) = {\rm A} \otimes \mathbb{C}[t, t^{-1}], \circ \big)$ 
is called the symmetric affinization of ${\rm A}$ if 
\begin{center}
$at^n \circ bt^m = \big(m(ab)+n(ba)\big)t^{n+m-1}.$
\end{center}
The following proposition gives an analog of 
result from Balinskii and Novikov for Jacobi-Jordan algebras.

\begin{proposition}
Let ${\rm A}$ be an algebra, then 
$\mathfrak{aff}^+({\rm A})$ is Jacobi--Jordan if and only if
${\rm A}$ is $\mathcal{M}a$-Novikov.
\end{proposition}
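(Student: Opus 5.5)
The plan is a direct computation in $\mathfrak{aff}^+({\rm A})$, comparing coefficients of a polynomial in the exponents. First I will note that the product $\circ$ on $\mathfrak{aff}^+({\rm A})$ is always commutative, whatever ${\rm A}$ is. Indeed,
\[
bt^m\circ at^n=\big(n(ba)+m(ab)\big)t^{n+m-1}=at^n\circ bt^m .
\]
So $\mathfrak{aff}^+({\rm A})$ is Jacobi--Jordan if and only if the Jacobi identity holds for all homogeneous elements:
\[
(at^n\circ bt^m)\circ ct^k+(bt^m\circ ct^k)\circ at^n+(ct^k\circ at^n)\circ bt^m=0 \qquad\text{for all } a,b,c\in{\rm A},\ n,m,k\in\mathbb Z .
\]
By bilinearity it suffices to check this on homogeneous elements.

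Next I will expand the first term. It equals
\[
\Big(mk\,(ab)c+m(n+m-1)\,c(ab)+nk\,(ba)c+n(n+m-1)\,c(ba)\Big)t^{n+m+k-2}.
\]
The other two terms are obtained by the cyclic substitution $(a,n)\to(b,m)\to(c,k)\to(a,n)$. The Jacobi sum is then $P(n,m,k)\,t^{n+m+k-2}$, where $P$ is a polynomial of degree at most $2$ in $n,m,k$ with coefficients in ${\rm A}$.

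The key step is that $P$ vanishes for all integers $n,m,k$ if and only if all its coefficients vanish. This holds because we are in characteristic $0$, so a polynomial vanishing on $\mathbb Z^3$ is zero. I will therefore read off the coefficients of the monomials $m^2, n^2, k^2, nm, mk, nk, n, m, k$ and show that, after using the $\mathbb S_3$-symmetry of the cyclic sum, they reduce exactly to the two defining identities of $\mathcal{M}a$-Novikov from the table. These are
\[
x(yz)=-\,y(xz)\qquad\text{and}\qquad \{x,y,z\}=-\{x,z,y\}.
\]

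For example, the coefficient of $m^2$ collects $c(ab)$ from the first term and $a(cb)$ from the second cyclic term. Its vanishing, $c(ab)+a(cb)=0$, is precisely the anti-left-commutativity $x(yz)=-y(xz)$. The coefficients of $n^2$ and $k^2$ give the same identity with the variables relabelled.

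I then expect the mixed quadratic coefficients and the linear ones to combine, modulo this first identity, into the anti-associator condition $\{x,y,z\}+\{x,z,y\}=0$. The linear coefficients come from the $-1$ in $n+m-1$ and carry the right-nested words $c(ab)$, $c(ba)$. The mixed coefficients carry both the left-nested words $(ab)c$, $(ba)c$ and right-nested words.

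For the converse I will show that the two $\mathcal{M}a$-Novikov identities kill every coefficient of $P$, so the Jacobi sum is $0$.

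The main obstacle is bookkeeping in the mixed and linear coefficients. I must verify that they produce exactly the anti-associator relation, with no additional independent constraint, so that the "only if" and "if" directions match. I will organize this by computing the coefficient of $nm$ together with that of $n$. I will first simplify using $x(yz)=-y(xz)$ to rewrite all right-nested words in a normal form such as $x(yz)$ with a fixed first letter. Then I will compare the result with the expanded form of $\{x,y,z\}+\{x,z,y\}$.
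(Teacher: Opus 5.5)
Your proposal is correct and follows essentially the paper's argument: expand the Jacobi sum of three homogeneous elements of $\mathfrak{aff}^+({\rm A})$ and read off identities from its dependence on the exponents. The paper specializes $n_2=n_3=0$ and $n_1=0$ and only asserts the converse, whereas your full coefficient comparison gives both directions at once. The bookkeeping is also simpler than you expect: the coefficients of $m^2$ and $m$ are $\pm\bigl(c(ab)+a(cb)\bigr)$ and the coefficient of $nm$ is literally $\{c,a,b\}+\{c,b,a\}$, so no reduction modulo $x(yz)=-y(xz)$ is needed; moreover, your expansion uses the stated rule $m(ab)+n(ba)$, whereas the paper's displayed expansion corresponds to $n(ab)+m(ba)$ and therefore actually yields the identities saying that ${\rm A}^{\rm op}$ is $\mathcal{M}a$-Novikov.
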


\begin{proof}
Let $\mathfrak{aff}^+({\rm A})$ be Jacobi--Jordan and  $a_1, a_2, a_3 \in {\rm A},$ then 
    \begin{longtable}{lll}
$(a_1t^{n_1} \circ a_2 t^{n_2}) \circ a_3 t^{n_3}$&$=$&$
n_1(n_1+n_2-1)\ (a_1a_2)a_3+n_1n_3 \ a_3(a_1a_2)+$\\
\multicolumn{3}{r}{$n_2(n_1+n_2-1)\ (a_2a_1)a_3
+n_2n_3 \ a_3(a_2a_1);$}\\

$(a_2t^{n_2} \circ a_3 t^{n_3}) \circ a_1 t^{n_1}$&$=$&$
n_2(n_2+n_3-1)\ (a_2a_3)a_1+n_2n_1 \ a_1(a_2a_3)+$\\
\multicolumn{3}{r}{$n_3(n_2+n_3-1)\ (a_3a_2)a_1
+n_3n_1 \ a_1(a_3a_2);$}\\

$(a_3t^{n_3} \circ a_1 t^{n_1}) \circ a_2 t^{n_2}$&$=$&$
n_3(n_3+n_1-1)\ (a_3a_1)a_2+n_3n_2 \ a_2(a_3a_1)+$\\
\multicolumn{3}{r}{$n_1(n_3+n_1-1)\ (a_1a_3)a_2
+n_1n_2 \ a_2(a_1a_3).$}\\
    \end{longtable}
Summarizing the above relations and using the Jacobi--Jordan identity
\begin{enumerate}
    \item for $n_2=n_3=0,$ we have $(a_1a_2)a_3 = - (a_1a_2)a_3.$

    \item for $n_1=0,$ we have 
  
$n_2(n_2-1) \ (a_2a_1)a_3 + 
n_2n_3 \ a_3(a_2a_1) +
n_2(n_2+n_3-1) \ (a_2a_3)a_1 +
 $
\begin{flushright}
$n_3(n_2+n_3-1) \ (a_3a_2)a_1 + 
n_3(n_3-1) \ (a_3a_1)a_2 +
    n_3n_2 \ a_2(a_3a_1)=0.$
 \end{flushright}
   
The last is equivalent to 
$\big\{a_3,a_2,a_1 \big\} = - \big\{a_2,a_3,a_1\big\}.$\end{enumerate}
Hence, ${\rm A}$ is $\mathcal{M}a$-Novikov.
On the other side, it is easy to see that for each
$\mathcal{M}a$-Novikov, its symmetric affinization gives a Jacobi--Jordan algebra.
\end{proof}

\end{document}